\def\setliststart#1{\setcounter{\@listctr}{#1}%
  \addtocounter{\@listctr}{-1}}
\newtheorem{The}{Theorem}[section]
 \newtheorem{Cor}[The]{Corollary}
 \newtheorem{Lem}[The]{Lemma}
 \newtheorem{Pro}[The]{Proposition}
 \theoremstyle{definition}
 \newtheorem{defn}[The]{Definition}
 \theoremstyle{remark}
 \newtheorem{Rem}[The]{Remark}
 \numberwithin{equation}{section}
\newcommand{\R}{\mathbb{R}}
\newcommand{\N}{\mathbb{N}}
\title[Global propagation of singularities]{Global propagation of singularities for  discounted Hamilton-Jacobi equations}
\author{Cui Chen \and Jiahui Hong \and Kai Zhao}
\address{School of Mathematical Sciences, Jiangsu University, Zhenjiang 212013, China}
\email{chenc@ujs.edu.cn}
\address{Department of Mathematics, Nanjing University, Nanjing 210093, China}
\email{hjh9413@163.com}
\address{School of Mathematical Sciences, Fudan University, Shanghai 200433, China}
\email{zhao$\_$kai@fudan.edu.cn}
\date{\today}
\keywords{Hamilton-Jacobi equation, viscosity solutions, singularities}
\begin{document}
\maketitle
\begin{abstract}
 The main purpose of this paper is to study the global propagation of singularities of viscosity solution to discounted Hamilton-Jacobi equation
\begin{equation}\label{eq:discount 1}\tag{HJ$_\lambda$}
	\lambda v(x)+H( x, Dv(x) )=0 , \quad x\in \R^n.
\end{equation}
We reduce the problem for equation \eqref{eq:discount 1} into that for a time-dependent evolutionary Hamilton-Jacobi equation. We proved that the singularities of the viscosity solution of \eqref{eq:discount 1} propagate along locally Lipschitz singular characteristics which can extend to $+\infty$. We also obtained the homotopy equivalence between the singular set and the complement of associated the Aubry set with respect to the viscosity solution of equation \eqref{eq:discount 1}.
\end{abstract} 

\section{introduction}
It is commonly accepted that, in optimal control, a crucial role is played by the Hamilton-Jacobi equation
\begin{equation}\label{eq:HJ}\tag{HJ$_e$}
	\begin{cases}
	D_t u(t ,x)+H(t,x,D_x u(t,x) )=0 \quad &(t,x)\in \R^+ \times \R ^n, \\
	u(0,x)=u_0(x) &x\in \R^n.
\end{cases}
\end{equation}
It is well known that the singularities of such solutions propagate locally along generalized characteristics. The evidence of irreversibility for the Hamilton-Jacobi equation  is the propagation  of  singularities. Once a singularity is created, it will propagate forward in time up to $+\infty$.  For a comprehensive survey of this topic, the readers can refer to \cite{Cannarsa_Cheng2021}.
 
The theory of local propagation of the singularities of the viscosity solutions of \eqref{eq:HJ} has been established in \cite{Albano_Cannarsa2002} by introducing the notion of generalized characteristics (see also \cite{Cannarsa_Yu2009}, \cite{Yu2006}). Other progress on the local propagation includes the strict singular characteristics (\cite{Khanin_Sobolevski2016}, see also \cite{Stromberg2013}). A recent remarkable result by Cannarsa and Cheng established the relation between generalized characteristics and strict singular characteristics on $\R^2$ (\cite{Cannarsa_Cheng2020}). 

In the paper \cite{Cannarsa_Cheng3}, Cannarsa and Cheng introduced an intrinsic method and obtained a global propagation result for time-independent Hamiltonian (see also \cite{Cannarsa_Mazzola_Sinestrari2015}). By a  procedure of sup-convolution with the kernel the fundamental solutions of associated autonomous  Hamilton-Jacobi equations, they constructed a global singular  arc $\mathbf{y}_x(t):[0,t_0]\to\R^n$ from an initial singular point $x$ and $t_0$ is independent of the initial point $x$. The uniformness of such $t_0$ holds because of  uniform conditions (L1)-(L3) in \cite{Cannarsa_Cheng3} . In \cite{Cannarsa_Cheng2018}, they ask the following problem $\mathbf{A3}$:
\begin{align*}
	\text{Can we drop the uniformness requirement of such } t_0 \text{ to obtain a global result ?}
\end{align*}
The first task of this paper is to drop the uniformness of $t_0$ which can not be guaranteed by, e.g., the so called Fathi-Maderna conditions (\cite{Fathi_Maderna2007}) which we will use for our purpose. In this paper, we showed that the answer to problem $\mathbf{A3}$ is affirmative for time-dependent case and discounted case.

There is a very natural connection between the discounted Hamilton-Jacobi equation \eqref{eq:discount 1} and the evolutionary Hamilton-Jacobi equation \eqref{eq:HJ} using a conformal Hamiltonian (see, for instance, \cite{Maro_Sorrentino2017}) or a contact Hamiltonian (see, for instance, \cite{Chen_Cheng_Zhang2018}). More precisely, if $v$ is the unique viscosity solution of \eqref{eq:discount 1}, we define
\begin{align*}
	u(t,x)=e^{\lambda t}v(x),\qquad x\in\R^n, t>0.
\end{align*}
Then $u(t,x)$ is a viscosity solution of \eqref{eq:HJ} with a time-dependent Hamiltonian in the form $e^{\lambda t}H(x,p/e^{\lambda t})$. Notice that $v$ and $u$ share the singularity. Thus, we can discuss the problem of propagation of singularities for equation \eqref{eq:HJ} instead of equation \eqref{eq:discount 1}. We developed the intrinsic method in \cite{Cannarsa_Cheng3} adapt to our problem which has more technical difficulty comparing to the time-independent case (\cite{Cannarsa_Cheng_Fathi2019}). 

Now we introduce the associated Lagrangian as
\begin{align*}
	L(s,x,v)=\sup_{p\in\R^n}\{p\cdot v-H(s,x,p)\},\qquad s>0, x\in\R^n, v\in\R^n.
\end{align*}
To deal with evolutionary Hamilton-Jacobi equation \eqref{eq:HJ}, we suppose $L=L(s,x,v):\R\times \R^n\times\R^n\to\R$ is of class $C^2$ and satisfies the following assumptions:
\begin{enumerate}[(L1)]
    \item $L(s,x,\cdot)$  is strict convex on $\R^n$ for all $s\in[0,\infty ] $, $x\in \R^n $ .
    \item For any fixed $T>0$, there exist $c_{T} > 0$ and two superlinear and nondecreasing function $\overline \theta_T, \theta_T:[0,+\infty)\to [0,+\infty)$, such that
	$\overline \theta_T(|v|)  \geqslant L(s, x, v)\geqslant \theta_T(|v|)-c_{T}, $ for all $ (s,x,v)\in [0,T] \times \R^n \times \R^n. $		
	\item There exists $\widetilde C_1,\widetilde C_2:\R \rightarrow [0,+\infty) $ such that $|L_t(s,x,v)|\leqslant \widetilde C_1(T)+\widetilde C_2(T)L(s,x,v)$ for all $(s,x,v)\in [0,T] \times \R^n \times \R^n$.
\end{enumerate}

We say that a curve $\gamma:[a,b]\to \R^n$ is $\lambda$-calibrated curve for equation \eqref{eq:discount 1} if
\begin{equation}\label{eq:calibrated 1}
	e^{\lambda b}u(\gamma(b)) - 	e^{\lambda a}u(\gamma(a)) =\int_a^b e^{\lambda t}L(\gamma(t),\dot \gamma(t)) \ dt,
\end{equation} 
and a curve $\gamma:[a,b]\to \R^n$ is calibrated curve for equation \eqref{cauchy equation} if
\begin{equation}\label{eq:calibrated 1}
	u(b,\gamma(b)) - u(a,\gamma(a)) =\int_a^b L(t,\gamma(t),\dot \gamma(t)) \ dt.
\end{equation}
A point $x \in \R^n $ is a cut point of $u$ if no backward $\lambda$-calibrated curve of equation \eqref{eq:discount 1} with Hamilton $H$ ending at $x$ can be extended beyond $x$. A point $(t,x) \in \R^+ \times \R^n $ is a cut point of $u$ if no backward calibrated curve of equation \eqref{cauchy equation} with Hamilton $H$ ending at $(t,x)$ can be extended beyond $(t,x)$. In both cases, we denote by $\mbox{\rm{Cut}}\,(u)$ the set of cut points of $u$. If $u$ is a viscosity solution of \eqref{eq:discount 1} or \eqref{cauchy equation}, a singularity of $u$ is a point where $u$ is not differentiable. We denote by $\mbox{\rm{Sing}}(u)$ the set of singularities of $u$. It is well known that $\mbox{\rm{Sing}}\,(u) \subset \mbox{\rm{Cut}}\,(u) \subset \overline{\mbox{\rm{Sing}}\,(u)}$. 	

Our main result for the time-dependent case is: Let $L$ be a Lagrangian which satisfies \mbox{\rm{(L1)-(L3)}} and let $H$ be the associated Hamiltonian. Suppose $u_0=u(0,\cdot):\R^n \rightarrow \R $ is a Lipschitz continuous  function. Then for any fixed $ (t_0,x) \in \mbox{\rm{Cut}}(u) \subset \R^+ \times \R^n $, there exists a curve $\mathbf{x}:\R \rightarrow \R^n $ with $\mathbf{x}(t_0)=x $, such that $ (s,\mathbf{x}(s)) \in \mbox{\rm{Sing}}(u) $ for all $s\in [t_0,+\infty) $.
	Moreover, If  condition \rm{\textbf{(A)}} (see Section \ref{Sect.3}) holds, then for any $T>0$, $\mathbf{x}(s)$ is a Lipschitz curve on $s\in [t_0,T]$.

Similarly, for the discounted equation \eqref{eq:discount 1} we denote by $L$ the associated Lagrangian of $H$. We suppose $L=L(x,v):\R^n\times\R^n\to\R$ is of $C^2$ class and satisfying the following assumptions:
\begin{enumerate}[\rm (L1')]
  \item $L(x,\cdot)$ is strictly convex for all $x\in\R^n$.
  \item There exist $c_1,c_2\geqslant 0$ and two superlinear functions $\theta_1,\theta_2:[0,+\infty)\to[0,+\infty)$ such that
  \begin{align*}
  \theta_2(|v|)+c_2 \geqslant L(x,v) \geqslant \theta_1(|v|)-c_1,\qquad \forall (x,v)\in\R^n\times\R^n.
  \end{align*}
\end{enumerate}

Our main result for the discounted case is: Let $L$ be a Lagrangian which satisfies \mbox{\rm{(L1')-(L2')}} and $H$ be the associated Hamiltonian and $\lambda>0$. Suppose $v:\R^n\to \R$ is  a Lipschitz continuous semiconcave viscosity solution of \eqref{eq:discount 1}. Then
\begin{enumerate}[(1)]
		\item  for any fixed $x\in\mbox{\rm{Cut}}(v)$, there exists a locally Lipschitz curve $\mathbf{x}:[0,+\infty)\rightarrow \R^n $ with $\mathbf{x}(0)=x $, such that $ \mathbf{x}(s) \in \mbox{\rm{Sing}}(v) $ for all $s\in [0,+\infty) $,
		\item  the inclusions 
	$$
	\mbox{\rm{Sing}}(v) \subset \mbox{\rm{Cut}}(v) \subset \Big( \R^n \backslash \mathcal{I}(v) \Big) \cap  \overline{\mbox{\rm{Sing}}(v)} \subset \R^n \backslash \mathcal{I}(v)
	$$
	are all homotopy equivalences and the spaces $\mbox{\rm{Sing}}\,(u)$ and  $\mbox{\rm{Cut}}\,(u)$ are all locally contractible.
\end{enumerate}

It worth noting that the construction of the homotopy equivalence here we used is very similar to what used in \cite{Cannarsa_Cheng_Fathi2017}, \cite{CCMW2019} and \cite{Cannarsa_Cheng_Fathi2019}. The general notion of the cut locus of $u$ for contact type Hamilton-Jacobi equation was studied in \cite{Cheng_Hong2021} recently for smooth initial data.

This paper is organized as follows. In Sect. \ref{Sect.2}, we introduce Lax-Oleinik operator associated to \eqref{eq:HJ} and give our global result on the propagation of singularities along local Lipschitz curves under an extra condition (A). In Sect.\ref{Sect.3}, we discuss the global propagation of singularities for discounted Hamiltonian \eqref{eq:discount 1} and give homotopy equivalence results as an application.This paper contains three appendices which include some background materials and useful conclusions. In Appendix \ref{Appendix A}, we collect some relevant regularity results with respect to the fundamental solution of \eqref{cauchy equation}. In Appendix \ref{Appendix B} and Appendix \ref{Appendix C}, we give the proof of Lemma \ref{T - T + est1}, Lemma \ref{lem:estimation2} and Lemma \ref{lem:homotopy on [0,t]}.

\medskip

\noindent\textbf{Acknowledgements.} Cui Chen is partly supported by National Natural Science Foundation of China (Grant No. 11801223, 11871267).

\section{Global propagation of singularities for time-dependent Hamiltonian}\label{Sect.2}

In this section, we will discuss the connection between sup-convolution, singularities and generalized characteristics for the following time-dependent Hamilton-Jacobi equation:
\begin{equation}\label{cauchy equation}\tag{HJ$_e$}
	\begin{cases}
	D_t u(t ,x)+H(t,x,D_x u(t,x) )=0 \quad &(t,x)\in \R^+ \times \R ^n, \\
	u(0,x)=u_0(x) &x\in \R^n.
\end{cases}
\end{equation}
Let $L(s,x,v)$ be the associated Lagrangian of $H(s,x,p)$. We assume that $L(s,x,v):[0,+\infty)\times \R^n \times \R^n \to  \R$ is a $C^2$-smooth function which satisfies the following standard assumptions:
\begin{enumerate}[(L1)]
    \item $L_{vv}(s,x,v)>0 $ for any  $(s,x,v) \in [0,+\infty )\times \R^n \times \R^n $.
    \item For any fixed $T>0$, there exist $c_{T} > 0$ and two superlinear and nondecreasing functions $\overline \theta_T, \theta_T:[0,+\infty)\to [0,+\infty)$, such that
	\begin{align*}
		  \overline \theta_T(|v|)  \geqslant L(s, x, v)\geqslant \theta_T(|v|)-c_{T}, \quad (s,x,v)\in [0,T] \times \R^n \times \R^n .
	\end{align*}		
	\item There exist two locally bounded functions $\widetilde C_1,\widetilde C_2:[0,+\infty) \rightarrow [0,+\infty) $ such that for any  $T>0$,
	$$|L_t(s,x,v)|\leqslant \widetilde C_1(T)+\widetilde C_2(T)L(s,x,v),\quad  (s,x,v)\in [0,T] \times \R^n \times \R^n.$$
\end{enumerate}
For any $0\leqslant s <t <+\infty$ and $x,y\in \R^n $, we define the fundamental solution of the Hamilton-Jacobi equation \eqref{eq:HJ} as follows:
\begin{equation}\label{action}
	A_{s,t}(x,y)=\inf_{\xi \in \Gamma^{s,t}_{x,y}} \int_s^t L(\tau, \xi(\tau) ,\dot \xi(\tau) )  d \tau.
\end{equation}
where 
$$
\Gamma^{s,t}_{x,y}=\{\xi \in W^{1,1}([0,t],\R^n):\gamma(s)=x, \gamma(t)=y \}
$$
We call $\xi \in \Gamma^{s,t}_{x,y} $ a minimizer for $A_{s,t}(x,y) $ if $A_{s,t}(x,y)=\int_s^t L(\tau,\xi(\tau),\dot \xi(\tau)  ) d \tau. $ The existence of minimizers in \eqref{action} is a well known result in Tonelli's theory (see, for instance, \cite{Clarke_Vinter1985_1}). Moreover, we have the following proposition:
\begin{Pro}\label{pro:fundamental solution}
	Suppose $L$ satisfies \mbox{\rm{(L1)-(L3)}}. Then for any $0\leqslant s<t <+\infty $ and $x,y \in\R^n $, there exists $\xi \in \Gamma^{s,t}_{x,y} $ such that $\xi$ is a minimizer for $A_{s,t}(x,y) $ and the following properties hold:
	\begin{enumerate}[\rm (1)]
		\item [(1)]$\xi$ is of class $C^2$ and satisfies 
		$$\frac{d}{ds}L_v(\tau,\xi(\tau),\dot \xi(\tau))=L_x(\tau,\xi(\tau),\dot \xi(\tau)), \quad \forall \tau \in[s,t].  
		$$
		\item [(2)] Let $p(\tau)=L_v(\tau,\xi(\tau),\dot \xi(\tau))$ for $\tau \in [s,t]$. Then $(\xi,p)$ satisfies
		\begin{equation}\label{eq:H}
		\begin{cases}
			\dot \xi(\tau)=H_p(\tau,\xi(\tau),p(\tau) ), \\
			\dot p(\tau)=-H_x(\tau,\xi(\tau),p(\tau) ),
		\end{cases}\qquad \forall \tau\in[s,t].
		\end{equation}
	\end{enumerate}
\end{Pro}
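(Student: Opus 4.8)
The plan is to carry out the classical Tonelli program, with hypothesis \mbox{\rm{(L3)}} supplying the extra estimate that the non-autonomous setting requires.

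\emph{Step 1 (existence of a minimizer).} Fix $0\leqslant s<t<+\infty$ and $x,y\in\R^n$; then $A_{s,t}(x,y)$ is finite (test with the affine curve from $x$ to $y$) and bounded below by the lower bound in \mbox{\rm{(L2)}}. Let $\xi_k\in\Gamma^{s,t}_{x,y}$ be a minimizing sequence. Taking $T=t$ in \mbox{\rm{(L2)}} gives $\int_s^t\theta_t(|\dot\xi_k|)\,d\tau\leqslant A_{s,t}(x,y)+1+c_t(t-s)$ for large $k$, so by superlinearity of $\theta_t$ and the de la Vall\'ee Poussin criterion $\{\dot\xi_k\}$ is uniformly integrable, and with the fixed endpoints $\{\xi_k\}$ is equi-bounded and equi-absolutely continuous. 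By Dunford--Pettis and Arzel\`a--Ascoli, along a subsequence $\xi_k\to\xi$ uniformly on $[s,t]$ and $\dot\xi_k\rightharpoonup\dot\xi$ weakly in $L^1$, so $\xi\in\Gamma^{s,t}_{x,y}$; convexity \mbox{\rm{(L1)}} yields Tonelli lower semicontinuity of the action, hence $\xi$ is a minimizer (cf.\ \cite{Clarke_Vinter1985_1}).

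\emph{Step 2 (Lipschitz regularity).} A priori only $\dot\xi\in L^1$, so one first establishes the Du Bois--Reymond form of the Euler--Lagrange condition: there is $c\in\R^n$ such that
\begin{equation*}
  L_v(\tau,\xi(\tau),\dot\xi(\tau))=c+\int_s^\tau L_x(\sigma,\xi(\sigma),\dot\xi(\sigma))\,d\sigma
\end{equation*}
for a.e.\ $\tau\in[s,t]$, together with the energy identity that $E(\tau):=\dot\xi(\tau)\cdot L_v(\tau,\xi(\tau),\dot\xi(\tau))-L(\tau,\xi(\tau),\dot\xi(\tau))$ is absolutely continuous with $E'(\tau)=-L_t(\tau,\xi(\tau),\dot\xi(\tau))$ a.e. Picking $\tau_0$ where $|\dot\xi(\tau_0)|$ does not exceed the mean of $|\dot\xi|$ over $[s,t]$ (which is finite by superlinearity in \mbox{\rm{(L2)}}) bounds $|E(\tau_0)|$ in terms of $\|L\|_{C^2}$ on a compact set and of $\int_s^t L\,d\tau=A_{s,t}(x,y)<\infty$; integrating $E'=-L_t$ and invoking \mbox{\rm{(L3)}} then gives
\begin{equation*}
  |E(\tau)|\leqslant|E(\tau_0)|+\widetilde C_1(t)(t-s)+\widetilde C_2(t)\,A_{s,t}(x,y),\qquad\tau\in[s,t],
\end{equation*}
so $E$ is bounded on $[s,t]$. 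By the Legendre duality attached to \mbox{\rm{(L1)--(L2)}}, $E(\tau)=H(\tau,\xi(\tau),p(\tau))$ with $p(\tau):=L_v(\tau,\xi(\tau),\dot\xi(\tau))$ and $\dot\xi(\tau)=H_p(\tau,\xi(\tau),p(\tau))$, while the upper bound $L\leqslant\overline\theta_t(|v|)$ makes $p\mapsto H(\tau,x,p)$ superlinear; hence $E\in L^\infty$ forces $p\in L^\infty$, and then $\dot\xi=H_p(\cdot,\xi,p)\in L^\infty$ since $H\in C^2$. Thus $\xi$ is Lipschitz on $[s,t]$ --- this is the Clarke--Vinter regularity theorem in the form valid under \mbox{\rm{(L1)--(L3)}}.

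\emph{Step 3 ($C^2$ regularity and the Hamiltonian system).} Since $p$ is now Lipschitz, the Du Bois--Reymond identity differentiates to $\dot p(\tau)=L_x(\tau,\xi(\tau),\dot\xi(\tau))$ for a.e.\ $\tau$, which is conclusion (1). Because $L_{vv}>0$, the Legendre map $(\tau,x,v)\mapsto(\tau,x,L_v(\tau,x,v))$ is a $C^1$ diffeomorphism onto its image, so $\dot\xi=\Phi(\cdot,\xi,p)$ for a $C^1$ map $\Phi$; continuity of $\xi$ and $p$ then forces $\xi\in C^1$, and bootstrapping yields $p\in C^1$ and $\xi\in C^2$, so (1) holds on all of $[s,t]$. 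Finally, $\dot\xi=H_p(\tau,\xi,p)$ and $L_x=-H_x(\tau,\xi,p)$ are the usual Legendre-transform identities, so $\dot p=L_x=-H_x(\tau,\xi,p)$; together with $\dot\xi=H_p(\tau,\xi,p)$ this is (2). The step I expect to be the main obstacle is Step 2: with a non-autonomous Lagrangian there is no conserved energy and a minimizer need not a priori be Lipschitz, so one must extract an $L^\infty$ bound on $\dot\xi$ from the Du Bois--Reymond condition, and it is precisely the linear bound of $|L_t|$ in terms of $L$ in \mbox{\rm{(L3)}} that closes the Gronwall-type estimate for $E$ using only the finiteness of the action $A_{s,t}(x,y)$; without such a condition one obtains only Tonelli's partial regularity on a full-measure subset of $[s,t]$ rather than on the whole interval.
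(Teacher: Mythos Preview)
Your proof is correct and follows the classical Tonelli--Clarke--Vinter route; the paper itself does not prove this proposition but simply records it as a well-known consequence of Tonelli's theory, citing \cite{Clarke_Vinter1985_1}. Your Step~2 correctly identifies that condition \mbox{\rm{(L3)}} is precisely what is needed in the non-autonomous setting to bound the energy $E$ (since it is no longer conserved) and thereby recover Lipschitz regularity of the minimizer, after which the $C^2$ bootstrap and passage to Hamilton's equations are routine.
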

In Appendix \ref{Appendix A}, we collect some relevant regularity results with respect to the fundamental solution $A_{s,t}(x,y)$. The proofs of these regularity results are similar to those in \cite{Cannarsa_Cheng3} for autonomous case.

\subsection{Semiconcave functions}
Let $\Omega \subset \R^n$ be a convex open set. We recall that a function $u:\Omega \to \R$ is said to be semiconcave (with linear modulus) if there exists a constant $C>0$ such that
$$
\lambda u(x)+(1-\lambda)u(y)-u(\lambda x+(1-\lambda )y )\leqslant \frac{C}{2}\lambda (1-\lambda )|x-y|^2,\quad \forall x,y\in \Omega,\lambda\in [0,1].
$$
For any continuous function $u:\R^n \to \R$ and $x\in \R^n$, we denote
\begin{align*}
	D^- u(x)= \Big\{ p\in T^*_x M: \lim \inf_{y\to x} \frac{u(y)-u(x)-\langle p,y-x \rangle }{|y-x|}\geqslant 0 \Big\},\\
	D^+ u(x)= \Big\{ p\in T^*_x M: \lim \sup_{y\to x} \frac{u(y)-u(x)-\langle p,y-x \rangle }{|y-x|}\leqslant 0 \Big\},
\end{align*}   
which are called the \textit{subdifferential and superdifferential} of $u$ at $x$, respectively. Let now $u:\R^n\to \R$ be locally Lipschitz and $x\in\R^n$. We call $p\in \R^n$ a \textit{reachable} gradient of $u$ at $x$ if there exists a sequence $\{x_k\}$ such that $u$ is differentiable at $x_k$  for all $k \in \N$ and
$$
\lim_{k\to \infty} x_k=x,\qquad \lim_{k\to \infty}Du(x_k)=p.
$$
The set of all reachable gradients of $u$ at $x$ is denoted by $D^*u(x)$.

\subsection{Lax-Oleinik operator in time-dependent case and \emph{a priori} estimate}



 Let  $f:\R^n\to\R$ be a Lipschitz function. For any $0\leqslant t_1<t_2<+\infty$ and $x_1,x_2\in\R^n$, we define the Lax-Oleinik operator
\begin{equation}\label{T -}
	T^-_{t_1,t_2}f(x_2):=\inf_{z\in \R^n } \{f(z)+A_{t_1,t_2}(z,x_2)  \},
\end{equation}
\begin{equation}\label{T +}
	T^+_{t_1,t_2}f(x_1):=\sup_{y\in \R^n } \{f(y)-A_{t_1,t_2}(x_1,y)  \},
\end{equation}
and denote
\begin{equation}
\begin{split}
	Z(f,t_1,t_2,x_2)=\{z\in\R^n : T^-_{t_1,t_2}f(x_2)= f(z)+A_{t_1,t_2}(z,x_2)\}, \\
	Y(f,t_1,x_1,t_2)=\{y\in\R^n : T^+_{t_1,t_2}f(x_1)= f(y)-A_{t_1,t_2}(x_1,y)\}.
\end{split}
\end{equation}

From Appendix B, we have the following \emph{a priori} estimates:

\begin{Lem}\label{T - T + est1}
	(\rm{proved in Appendix \ref{Appendix B}}) \ Suppose $L$ satisfies \mbox{\rm{(L1)-(L3)}} and $f$ is a Lipschitz function on $\R^n$. Then for any fixed $T>0$, there exists a constant $\lambda_1(T,\mbox{\rm{Lip}}[f])>0$ such that for any $0\leqslant t_1<t_2\leqslant T$ and $x_1,x_2\in\R^n$
	\begin{enumerate}[\rm (1)]
		\item $Z(f,t_1,t_2,x_2) \neq \emptyset $, and for any $z_{t_1,t_2,x_2}\in Z(f,t_1,t_2,x_2)$,
		\begin{equation*}
		|z_{t_1,t_2,x_2}-x_2|\leqslant \lambda_1(T,\mbox{\rm{Lip}}[f])(t_2-t_1).
		\end{equation*}
		
		\item $Y(f,t_1,x_1,t_2) \neq \emptyset $, and for any $y_{t_1,t_2,x_1}\in Y(f,t_1,x_1,t_2) $,
		\begin{equation*}
		|y_{t_1,t_2,x_2}-x_1|\leqslant \lambda_1(T,\mbox{\rm{Lip}}[f])(t_2-t_1).
		\end{equation*}
	\end{enumerate}
	where $\lambda_1(T,K)= \theta_{T}^*( K +1 )+ c_{T} + \overline \theta_{T} (0 ) $ for $T>0$ and $K\geqslant 0$.
\end{Lem}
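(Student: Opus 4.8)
The plan is to obtain both bounds from a single pointwise Fenchel--Young estimate for the Lagrangian, followed by comparing the optimal value of the Lax--Oleinik operator with its value at the base point. Items (1) and (2) are mirror images, so I will carry out (1) in full and only indicate the changes for (2). Throughout write $K=\mbox{\rm{Lip}}[f]$, and let $\theta_T^*$ denote the convex conjugate of $\theta_T$, which is finite everywhere because $\theta_T$ is superlinear.

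First I record an action estimate. Testing the constant curve $\xi\equiv x_2$ in \eqref{action} and using the upper bound in (L2) gives $A_{t_1,t_2}(x_2,x_2)\le (t_2-t_1)\overline\theta_T(0)$. For a lower bound, note that $\theta_T(r)\ge (K+1)r-\theta_T^*(K+1)$ for every $r\ge 0$ by definition of the conjugate; hence, using the lower bound in (L2), every $\xi\in\Gamma^{t_1,t_2}_{z,x_2}$ satisfies
\begin{equation*}
\int_{t_1}^{t_2}L(\tau,\xi,\dot\xi)\,d\tau\ \ge\ (K+1)\int_{t_1}^{t_2}|\dot\xi|\,d\tau-(t_2-t_1)\big(\theta_T^*(K+1)+c_T\big)\ \ge\ (K+1)\,|z-x_2|-(t_2-t_1)\big(\theta_T^*(K+1)+c_T\big),
\end{equation*}
and taking the infimum over $\xi$ yields $A_{t_1,t_2}(z,x_2)\ge (K+1)|z-x_2|-(t_2-t_1)(\theta_T^*(K+1)+c_T)$. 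Combined with the Lipschitz bound $f(z)\ge f(x_2)-K|z-x_2|$, this shows that $z\mapsto f(z)+A_{t_1,t_2}(z,x_2)$ is bounded below by $f(x_2)+|z-x_2|-C$, hence is coercive; since it is continuous by the regularity of $A_{s,t}$ collected in Appendix~\ref{Appendix A}, its infimum is attained, i.e. $Z(f,t_1,t_2,x_2)\neq\emptyset$.

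Now fix $z\in Z(f,t_1,t_2,x_2)$. Comparing the value at $z$ with the value at $x_2$ in the definition of $T^-_{t_1,t_2}f$ and using $A_{t_1,t_2}(x_2,x_2)\le (t_2-t_1)\overline\theta_T(0)$ gives $f(z)+A_{t_1,t_2}(z,x_2)=T^-_{t_1,t_2}f(x_2)\le f(x_2)+(t_2-t_1)\overline\theta_T(0)$. Inserting the action lower bound above and then $f(x_2)-f(z)\le K|z-x_2|$, the term $K|z-x_2|$ cancels and one is left with $|z-x_2|\le\big(\theta_T^*(K+1)+c_T+\overline\theta_T(0)\big)(t_2-t_1)=\lambda_1(T,K)(t_2-t_1)$, which is (1). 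For (2), replace the infimum by the supremum: for a maximizer $y\in Y(f,t_1,x_1,t_2)$, compare $f(y)-A_{t_1,t_2}(x_1,y)$ with its value at $y=x_1$, use $A_{t_1,t_2}(x_1,x_1)\le (t_2-t_1)\overline\theta_T(0)$ together with the same action lower bound for curves joining $x_1$ to $y$, and conclude $|y-x_1|\le\lambda_1(T,K)(t_2-t_1)$; the nonemptiness of $Y(f,t_1,x_1,t_2)$ follows from the identical coercivity-plus-continuity argument (the functional $y\mapsto f(y)-A_{t_1,t_2}(x_1,y)$ is bounded above by $f(x_1)-|y-x_1|+C$).

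The computation is elementary, so I do not expect a genuine obstacle. The only points that require attention are that all the bounds extracted from (L2) are uniform in $(s,x)\in[0,T]\times\R^n$, so that the resulting constant $\lambda_1$ depends only on $T$ and $\mbox{\rm{Lip}}[f]$ and not on $t_1,t_2,x_1,x_2$; and that the continuity of $(x,y)\mapsto A_{s,t}(x,y)$ used to produce the optimizers $z$ and $y$ is exactly what is recorded in Appendix~\ref{Appendix A}.
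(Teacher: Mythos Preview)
Your proof is correct and follows essentially the same route as the paper: bound $A_{t_1,t_2}(z,x_2)$ from below via the Fenchel--Young inequality $\theta_T(r)\ge(K+1)r-\theta_T^*(K+1)$, bound $A_{t_1,t_2}(x_2,x_2)$ from above by $(t_2-t_1)\overline\theta_T(0)$, and compare with the Lipschitz estimate on $f$ to get both the compactness (hence existence of optimizers) and the displacement bound with the stated constant $\lambda_1(T,K)$. The paper phrases the existence step as ``the sublevel set is contained in a closed ball, hence compact, hence the infimum is attained,'' which is your coercivity argument in slightly different words.
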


For $0\leqslant t_1<t_2<+\infty$ and $x_1,x_2\in\R^n$, denote
\begin{align*}
	\Gamma^{t_1,t_2}_{\cdot,x_2}=\{\xi \in W^{1,1}([t_1,t_2],\R^n):\xi(t_2)=x_2 \},\\
	\Gamma^{t_1,t_2}_{x_1,\cdot}=\{\xi \in W^{1,1}([t_1,t_2],\R^n):\xi(t_1)=x_1 \}.
\end{align*}

\begin{Lem}\label{lem:p endpoint}
Suppose $L$ satisfies \mbox{\rm{(L1)-(L3)}}, $f$ is a Lipschitz function on $\R^n$ and $0\leqslant t_1<t_2<+\infty$, $x_1,x_2\in\R^n$.
\begin{enumerate}[\rm (1)]
	\item If $\xi\in\Gamma^{t_1,t_2}_{\cdot,x_2}$ is a minimizer for $T_{t_1,t_2}^{-}f(x_2)$, then
	$
		p(t_1)=L_v(t_1,\xi(t_1),\dot{\xi}(t_1))\in D^{-}f(\xi(t_1)).
	$
    \item If $\xi\in\Gamma^{t_1,t_2}_{x_1,\cdot}$ is a maximizer for $T_{t_1,t_2}^{+}f(x_1)$, then
    $
    	p(t_2)=L_v(t_2,\xi(t_2),\dot{\xi}(t_2))\in D^{+}f(\xi(t_2)).
    $
\end{enumerate}
\end{Lem}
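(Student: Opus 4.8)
The plan is to prove both statements by a direct first-variation argument, exploiting the optimality of the free endpoint together with the Euler--Lagrange equation furnished by Proposition~\ref{pro:fundamental solution}. I will give the details for (1); (2) is entirely symmetric.

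For (1), write $z_0=\xi(t_1)$, so that $z_0\in Z(f,t_1,t_2,x_2)$ and $\xi$ is a Tonelli minimizer for $A_{t_1,t_2}(z_0,x_2)$; by Proposition~\ref{pro:fundamental solution}, $\xi$ is of class $C^2$ and satisfies $\tfrac{d}{d\tau}L_v(\tau,\xi,\dot\xi)=L_x(\tau,\xi,\dot\xi)$ on $[t_1,t_2]$. Put $p(t_1)=L_v(t_1,\xi(t_1),\dot\xi(t_1))$. Fix $w\in\R^n$ and, for small $\epsilon>0$, introduce the admissible competitor $\xi_\epsilon(\tau)=\xi(\tau)+\epsilon w\,\phi(\tau)$ with $\phi(\tau)=\tfrac{t_2-\tau}{t_2-t_1}$, which joins $z_0+\epsilon w$ to $x_2$ and is of class $C^2$; hence $A_{t_1,t_2}(z_0+\epsilon w,x_2)\le\int_{t_1}^{t_2}L(\tau,\xi_\epsilon,\dot\xi_\epsilon)\,d\tau$. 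Combining this with the minimality inequality $f(z_0)+A_{t_1,t_2}(z_0,x_2)\le f(z_0+\epsilon w)+A_{t_1,t_2}(z_0+\epsilon w,x_2)$ and the fact that $A_{t_1,t_2}(z_0,x_2)=\int_{t_1}^{t_2}L(\tau,\xi,\dot\xi)\,d\tau$ gives
\[
f(z_0+\epsilon w)-f(z_0)\ \ge\ \int_{t_1}^{t_2}\big[L(\tau,\xi,\dot\xi)-L(\tau,\xi_\epsilon,\dot\xi_\epsilon)\big]\,d\tau .
\]
Differentiating the right-hand side at $\epsilon=0$ (legitimate since $L$ is $C^2$ and the $\xi_\epsilon$ remain in a fixed compact set of phase space), integrating the $L_v$-term by parts, and using the Euler--Lagrange equation together with $\phi(t_1)=1$, $\phi(t_2)=0$, all $L_x$-contributions cancel and the derivative equals $-p(t_1)\cdot w$. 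Therefore $f(z_0+\epsilon w)-f(z_0)\ge \epsilon\,p(t_1)\cdot w+o(\epsilon)$; since $w$ is an arbitrary direction and the remainder is $o(\epsilon)$ uniformly in $|w|=1$, letting $z=z_0+\epsilon w\to z_0$ yields $\liminf_{z\to z_0}\tfrac{f(z)-f(z_0)-p(t_1)\cdot(z-z_0)}{|z-z_0|}\ge0$, i.e.\ $p(t_1)\in D^-f(\xi(t_1))$.

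For (2) I would run the mirror image: with $y_0=\xi(t_2)\in Y(f,t_1,x_1,t_2)$ and $\xi$ a minimizer for $A_{t_1,t_2}(x_1,y_0)$, use the competitor $\xi_\epsilon(\tau)=\xi(\tau)+\epsilon w\,\psi(\tau)$ with $\psi(\tau)=\tfrac{\tau-t_1}{t_2-t_1}$, joining $x_1$ to $y_0+\epsilon w$. The maximality defining $T^+_{t_1,t_2}f(x_1)$ now produces $f(y_0+\epsilon w)-f(y_0)\le\int_{t_1}^{t_2}[L(\tau,\xi_\epsilon,\dot\xi_\epsilon)-L(\tau,\xi,\dot\xi)]\,d\tau$, whose $\epsilon$-derivative at $0$ is $p(t_2)\cdot w$ (the integrated term is now evaluated at $t_2$ since $\psi(t_1)=0$, $\psi(t_2)=1$). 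Hence $f(y_0+\epsilon w)-f(y_0)\le \epsilon\,p(t_2)\cdot w+o(\epsilon)$ for every $w$, which is exactly $p(t_2)\in D^+f(\xi(t_2))$.

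The only point requiring care is the \emph{uniformity} of the error: to upgrade the one-sided directional estimates into genuine membership in $D^-f$ and $D^+f$ one needs the Taylor remainder $\int_{t_1}^{t_2}[L(\tau,\xi_\epsilon,\dot\xi_\epsilon)-L(\tau,\xi,\dot\xi)]\,d\tau + \epsilon\,p(t_1)\cdot w$ to be $O(\epsilon^2)$ with a constant independent of the unit vector $w$. This follows from the $C^2$-regularity of $L$ and the relative compactness of $\{(\tau,\xi_\epsilon(\tau),\dot\xi_\epsilon(\tau)):\tau\in[t_1,t_2],\,0\le\epsilon\le1,\,|w|\le1\}$, on which $D^2L$ is bounded; no extra hypothesis is needed. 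Alternatively, one could avoid the explicit computation altogether by invoking the local semiconcavity of $x\mapsto A_{t_1,t_2}(x,x_2)$ and $y\mapsto A_{t_1,t_2}(x_1,y)$ together with the endpoint-momentum formulas for the fundamental solution recorded in Appendix~\ref{Appendix A}, but the direct variational argument above seems cleanest and most self-contained.
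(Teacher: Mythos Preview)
Your argument is correct and is the standard first-variation proof of this fact. Note, however, that the paper states Lemma~\ref{lem:p endpoint} without proof, so there is no proof in the paper to compare against; your write-up could in fact serve as the missing proof.

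One minor slip: when you say ``the derivative equals $-p(t_1)\cdot w$'' you have the sign reversed. Indeed, with $g(\epsilon)=\int_{t_1}^{t_2}[L(\tau,\xi,\dot\xi)-L(\tau,\xi_\epsilon,\dot\xi_\epsilon)]\,d\tau$ one computes
\[
g'(0)=-\int_{t_1}^{t_2}\big[L_x\cdot w\phi+L_v\cdot w\dot\phi\big]\,d\tau
=-\int_{t_1}^{t_2}L_x\cdot w\phi\,d\tau-\Big(-p(t_1)\cdot w-\int_{t_1}^{t_2}L_x\cdot w\phi\,d\tau\Big)=p(t_1)\cdot w,
\]
which is exactly what you use in the next sentence, so the final conclusion $f(z_0+\epsilon w)-f(z_0)\ge \epsilon\,p(t_1)\cdot w+o(\epsilon)$ is unaffected. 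Your handling of the uniformity of the $o(\epsilon)$ remainder (via compactness of the competitor family and the $C^2$ bound on $L$) is the right justification, and your remark that one could alternatively invoke the semiconcavity of $A_{t_1,t_2}$ together with Proposition~\ref{A.5} is also valid.
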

From now on, suppose $u_0$ is a Lipschitz function on $\R^n$ and denote
\begin{equation}\label{eq:u(t,x) rep 1}
	u(t,x)=T^-_{0,t}u_0(x) =\inf_{z\in \R^n} \{u_0(z)+A_{0,t}(z,x)  \}, \quad (t,x) \in [0,+\infty)\times \R^n.
\end{equation}
Actually, we also have the following representation:
 \begin{equation}\label{eq:u(t,x)}
 u(t,x)=\inf_{\xi \in \Gamma^{0,t}_{\cdot, x}} \big\{ u_0(\xi(0))+ \int_0^t L(\tau, \xi(\tau) ,\dot \xi(\tau) )  d \tau\big\} . 
 \end{equation} 

\begin{Pro}\label{pro:properties of u}\cite{Cannarsa_Sinestrari_book}
	The following properties hold.
	\begin{enumerate}[\rm (1)]
		\item $u(t,x)$ is a viscosity solution of \eqref{cauchy equation}.
		\item $u(t,x)$ is locally linear semiconcave on $(0,+\infty)\times \R^n$. More precisely, for any $0<T_1<T_2$ and $R>0$, there exists $C(T_1,T_2,R)>0$ such that $u(t,x)$ is linearly semiconcave on $(T_1,T_2)\times B(0,R)$ with semiconcavity constant $C(T_1,T_2,R)$. Moreover, $C(T_1,T_2,R)$ is continuous with respect to $R$.
		\item For any $(t,x)\in (0,\infty)\times \R^n$ and any minimizer $\xi \in \Gamma^{0,t}_{\cdot ,x} $ of \eqref{eq:u(t,x)}, $u$ is differentiable at $(\tau,\xi(\tau))$ for all $\tau\in (0,t)$.
	\end{enumerate} 
\end{Pro}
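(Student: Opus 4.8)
The plan is to establish the three assertions in turn; each is classical for Tonelli Lagrangians (cf.\ \cite{Cannarsa_Sinestrari_book}), so I would only indicate how the time dependence of $L$, entering through (L3), is absorbed, and where the constants are allowed to degenerate as $t\downarrow 0$.

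\textbf{(1) Viscosity solution.} First I would verify that $u$ is locally Lipschitz on $(0,+\infty)\times\R^n$ and that $u(t,\cdot)\to u_0$ as $t\downarrow 0$. Lipschitz continuity in $x$ at fixed $t$ is read off from the representation \eqref{eq:u(t,x) rep 1} together with the a priori confinement of the minimizing points $z\in Z(u_0,0,t,x)$ supplied by Lemma \ref{T - T + est1}(1); continuity in $t$, and the attainment of the initial datum, follow by comparing $u(t,x)$ with the competitor obtained by splicing a near-optimal curve with a short quasi-constant segment and estimating the extra cost by the growth bound (L2) and the variation of $L$ along the segment by (L3). The semigroup identity $u(t,\cdot)=T^-_{s,t}u(s,\cdot)$ for $0<s<t$ is immediate from additivity of the action under concatenation of $W^{1,1}$ curves. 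The sub- and supersolution inequalities then come from the usual verification argument: if $u-\phi$ has a local minimum (resp.\ maximum) at $(t_0,x_0)$ with $\phi$ smooth, insert into the semigroup identity a short minimizing curve (resp.\ the affine curve $\tau\mapsto x_0+(\tau-t_0)v$), divide by $t_0-s$, and let $s\uparrow t_0$; one uses that a minimizer is $C^2$ with a well-defined velocity at $t_0$ by Proposition \ref{pro:fundamental solution}, and passes between $L$ and $H$ by the Fenchel inequality, which is an equality along the momentum $p=L_v$.

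\textbf{(2) Local linear semiconcavity.} The input I would use is the joint regularity of the fundamental solution collected in Appendix \ref{Appendix A}: on each region $0<T_1\le t\le T_2$, $|x|\le R'$, $|z|\le R''$ the map $(t,x)\mapsto A_{0,t}(z,x)$ is linearly semiconcave with a modulus $C'=C'(T_1,T_2,R',R'')$ that is finite precisely because $t$ stays bounded below by $T_1$, and is continuous in its spatial parameters. Given $(t,x)$ in $(T_1,T_2)\times B(0,R)$ (and, by the same bound, in a neighborhood of it), Lemma \ref{T - T + est1}(1) forces every minimizing $z$ into the ball $B(0,R'')$ with $R''=R+\lambda_1(T_2,\mbox{\rm{Lip}}[u_0])T_2$; hence on that set $u(t,x)=\inf_{|z|\le R''}\{u_0(z)+A_{0,t}(z,x)\}$, an infimum of a family of functions of $(t,x)$ all semiconcave with the single modulus $C'(T_1,T_2,R,R'')$, and an infimum of uniformly semiconcave functions is semiconcave with the same modulus. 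This gives $C(T_1,T_2,R)$, whose continuity in $R$ is inherited from that of $C'$ together with the linear dependence of $R''$ on $R$. This is the step I expect to be the main obstacle, since one has to push the $t$-dependence of $L$ through the Tonelli estimates for $A_{0,t}$ without losing uniformity in the parameters --- exactly the role of (L3) --- and one must keep track of the blow-up of all constants as $T_1\downarrow 0$, so the statement is genuinely local in $t$ away from the initial time.

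\textbf{(3) Differentiability along minimizers.} Fix $(t,x)\in(0,\infty)\times\R^n$, a minimizer $\xi\in\Gamma^{0,t}_{\cdot,x}$ of \eqref{eq:u(t,x)}, and $\tau\in(0,t)$. By the semigroup identity $u(t,x)=\inf_{z\in\R^n}\{u(\tau,z)+A_{\tau,t}(z,x)\}$, the infimum being attained at $z=\xi(\tau)$ with the curve $\xi|_{[\tau,t]}$ realizing $A_{\tau,t}(\xi(\tau),x)$; thus $\xi|_{[\tau,t]}$ is a minimizer for $T^-_{\tau,t}\,u(\tau,\cdot)$ at $x$. Lemma \ref{lem:p endpoint}(1), applied with $f=u(\tau,\cdot)$ (which is Lipschitz by (1)) on $[\tau,t]$, then yields $p(\tau)=L_v(\tau,\xi(\tau),\dot\xi(\tau))\in D^-[u(\tau,\cdot)](\xi(\tau))$. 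On the other hand $u(\tau,\cdot)$ is semiconcave near $\xi(\tau)$ by (2), and a semiconcave function whose subdifferential is nonempty at a point is differentiable there; hence $u(\tau,\cdot)$ is differentiable at $\xi(\tau)$ with $D_xu(\tau,\xi(\tau))=p(\tau)$. Finally, $u$ being a Lipschitz viscosity solution pins down $D_tu(\tau,\xi(\tau))=-H(\tau,\xi(\tau),p(\tau))$ (equivalently, $(-H,\,p(\tau))$ is the only possible element of $D^+u(\tau,\xi(\tau))$, which is nonempty by semiconcavity), so $u$ is differentiable at $(\tau,\xi(\tau))$, as claimed.
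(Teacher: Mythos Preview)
The paper does not supply a proof of this proposition at all: it is stated with the citation \cite{Cannarsa_Sinestrari_book} and used as a black box. Your sketch is therefore more than the paper itself provides, and it is correct and follows the standard route one finds in that reference.

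A few minor remarks. In part (2), when you invoke the semiconcavity of $(t,y)\mapsto A_{0,t}(z,y)$ from Appendix~\ref{Appendix A}, note that Proposition~\ref{A.2} is stated on cones $S_\lambda(0,z,t_0)$; you should make explicit that, with $\lambda$ taken large enough (depending on $T_1,T_2,R,R''$), the region $(T_1,T_2)\times B(0,R)$ is contained in every such cone for $z\in B(0,R'')$, and that by the Remark following Proposition~\ref{A.2} one may take $t_0=T_2$. In part (3), your final step ``$D_tu$ is pinned down by the equation'' is best phrased via Proposition~\ref{prop:D^*}: once $D_xu(\tau,\xi(\tau))=p(\tau)$ is established, every element of $D^*u(\tau,\xi(\tau))$ has $p$-component $p(\tau)$ and hence $q$-component $-H(\tau,\xi(\tau),p(\tau))$, so $D^*u$ is a singleton and $D^+u=\mathrm{co}\,D^*u$ is as well. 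This is exactly what you wrote parenthetically; it is worth promoting to the main line of the argument.
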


Moreover, we have the following result
\begin{Pro}\label{prop:D^*}\cite[Thm 6.4.9]{Cannarsa_Sinestrari_book}
	For any $(t,x)\in (0,+\infty)\times \R^n$, $(q,p)\in D^* u(t,x) $ if and only if there exists a minimizer $\gamma\in\Gamma^{0,t}_{\cdot ,x}$ of \eqref{eq:u(t,x)} such that and $p =L_v(t, x, \dot \gamma(t)) $ and $q=-H(t,x,p)$.
\end{Pro}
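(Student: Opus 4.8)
The plan is to prove the two implications separately, using the Tonelli/Hamiltonian structure of minimizers together with Propositions~\ref{pro:fundamental solution} and \ref{pro:properties of u}. For the \emph{sufficiency} direction, suppose $\gamma\in\Gamma^{0,t}_{\cdot,x}$ is a minimizer of \eqref{eq:u(t,x)} and put $p(\tau)=L_v(\tau,\gamma(\tau),\dot\gamma(\tau))$, $p=p(t)$, $q=-H(t,x,p)$. First I would note that for each $s\in(0,t)$ the restriction $\gamma|_{[0,s]}$ is still a minimizer for $u(s,\gamma(s))$ (dynamic programming), so by Proposition~\ref{pro:properties of u}(3) the function $u$ is differentiable at $(s,\gamma(s))$. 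Differentiating the calibration identity $u(\sigma,\gamma(\sigma))=u_0(\gamma(0))+\int_0^\sigma L(\tau,\gamma(\tau),\dot\gamma(\tau))\,d\tau$ in $\sigma$ at $\sigma=s$, and using that $u$ satisfies \eqref{cauchy equation} classically at the differentiability point $(s,\gamma(s))$, yields
\[
\langle D_xu(s,\gamma(s)),\dot\gamma(s)\rangle-H\big(s,\gamma(s),D_xu(s,\gamma(s))\big)=L(s,\gamma(s),\dot\gamma(s)),
\]
so the equality case of the Fenchel inequality forces $D_xu(s,\gamma(s))=L_v(s,\gamma(s),\dot\gamma(s))=p(s)$ and $D_tu(s,\gamma(s))=-H(s,\gamma(s),p(s))$. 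Since $\gamma$ is of class $C^2$, letting $s\to t^-$ gives $(s,\gamma(s))\to(t,x)$ and $(D_tu,D_xu)(s,\gamma(s))\to(q,p)$, i.e.\ $(q,p)\in D^*u(t,x)$.

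For the \emph{necessity} direction, given $(q,p)\in D^*u(t,x)$ I would pick differentiability points $(t_k,x_k)\to(t,x)$ with $(D_tu,D_xu)(t_k,x_k)\to(q,p)$ and $t_k$ bounded away from $0$ and $\infty$, and for each $k$ choose a Tonelli minimizer $\gamma_k\in\Gamma^{0,t_k}_{\cdot,x_k}$ for $u(t_k,x_k)$, with momentum $p_k(\tau)=L_v(\tau,\gamma_k(\tau),\dot\gamma_k(\tau))$. Arguing exactly as above but at the endpoint $\sigma=t_k$ (the calibration identity is differentiable there because $u$ is), one gets $D_xu(t_k,x_k)=p_k(t_k)$, hence $(\gamma_k(t_k),p_k(t_k))\to(x,p)$. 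Since $L$ is $C^2$ and strictly convex, $H$ is $C^2$ and the flow of the Hamiltonian system \eqref{eq:H} depends continuously on its data; solving \eqref{eq:H} backward from $(x_k,D_xu(t_k,x_k))$ at time $t_k$, I would extract a $C^1$ limit $(\gamma_k,p_k)\to(\gamma,p)$ on $[0,t]$, where $(\gamma,p)$ solves \eqref{eq:H} with $(\gamma(t),p(t))=(x,p)$, so $p=L_v(t,x,\dot\gamma(t))$. That $\gamma$ is a minimizer of \eqref{eq:u(t,x)} then follows by passing to the limit in $u(t_k,x_k)=u_0(\gamma_k(0))+\int_0^{t_k}L(\tau,\gamma_k,\dot\gamma_k)\,d\tau$, using the uniform convergence of $\gamma_k,\dot\gamma_k$ and the continuity of $u$. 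Finally $q=\lim_k D_tu(t_k,x_k)=\lim_k\,\big(-H(t_k,x_k,D_xu(t_k,x_k))\big)=-H(t,x,p)$.

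The main obstacle I expect is precisely the compactness step in the necessity direction: one must ensure the approximating minimizers $\gamma_k$ do not escape and do converge in $C^1$. The endpoints $\gamma_k(0)$ stay in a fixed compact set by the a priori estimate of Lemma~\ref{T - T + est1}, and Tonelli's theory gives a uniform bound on $\|\dot\gamma_k\|_\infty$ on the (uniformly bounded) intervals $[0,t_k]$; together with the convergence of the terminal data and continuous dependence for \eqref{eq:H} this yields the $C^1$-limit. The one genuinely delicate point is the mismatch between the time horizons $[0,t_k]$ and $[0,t]$, which I would absorb by extending each $(\gamma_k,p_k)$ slightly beyond $t_k$ as a solution of \eqref{eq:H} and invoking continuous dependence on the endpoint time as well (so that $\gamma_k(t)\to x$ and $p_k(t)\to p$ by the uniform Lipschitz bound). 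Everything else is a routine combination of Fenchel duality, the chain rule, and the regularity already recorded in Propositions~\ref{pro:fundamental solution}--\ref{pro:properties of u}.
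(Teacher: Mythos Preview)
The paper does not give its own proof of this proposition: it is simply quoted from \cite[Thm.~6.4.9]{Cannarsa_Sinestrari_book}. Your argument is correct and is essentially the standard proof one finds in that reference, so there is nothing to compare against within the paper itself.

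One minor remark on the necessity direction: to obtain $D_xu(t_k,x_k)=p_k(t_k)$ at the differentiability point $(t_k,x_k)$ you do not need to re-run the interior calibration argument; it is cleaner to note that $u(t_k,y)\leqslant u_0(\gamma_k(0))+A_{0,t_k}(\gamma_k(0),y)$ with equality at $y=x_k$, so $L_v(t_k,x_k,\dot\gamma_k(t_k))=D_yA_{0,t_k}(\gamma_k(0),x_k)\in D^+_xu(t_k,x_k)$ by Proposition~\ref{A.5}, and differentiability of $u$ then forces equality. This avoids appealing to differentiability of the calibration identity at the terminal time.
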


\begin{Lem}\label{lem:estimation2}
(\rm{proved in Appendix \ref{Appendix B}}) \ For any fixed $T>0$, there exists $F_0(T)\geqslant 0$ such that $u$ is a Lipschitz function on $(0,T]\times\R^n$ and $\mbox{\rm Lip}\,[u]\leqslant F_0(T)$.
\end{Lem}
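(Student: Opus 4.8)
The plan is to reduce the Lipschitz bound for $u$ to a uniform velocity bound for the minimizing curves in the representation \eqref{eq:u(t,x)}, via the description of reachable gradients in Proposition \ref{prop:D^*}. Fix $T>0$ and $(t,x)\in(0,T]\times\R^n$, and let $\gamma\in\Gamma^{0,t}_{\cdot,x}$ be any minimizer of \eqref{eq:u(t,x)}, so $\gamma(0)=z\in Z(u_0,0,t,x)$. By Lemma \ref{T - T + est1}, $|z-x|\leqslant\lambda_1(T,\mathrm{Lip}\,[u_0])\,t$; comparing $\gamma$ with the straight segment from $z$ to $x$ and using (L2) (its upper bound along the segment, its lower bound along $\gamma$) squeezes the action $\int_0^t L(\tau,\gamma,\dot\gamma)\,d\tau$ between $-c_Tt$ and $\overline\theta_T(\lambda_1)\,t$. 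Feeding these two-sided bounds into the Tonelli-type a priori regularity estimates collected in Appendix \ref{Appendix A} — this is where (L1)--(L3) are used, with (L3) controlling the growth of the energy $H(\tau,\gamma(\tau),p(\tau))$ along the extremal — produces a constant $\kappa=\kappa(T,\mathrm{Lip}\,[u_0])$, independent of $(t,x)$, with $|\dot\gamma(\tau)|\leqslant\kappa$ for all $\tau\in[0,t]$; in particular $|\dot\gamma(t)|\leqslant\kappa$.

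Next I would record a uniform bound on $L_v$: by (L1) each $L(s,x,\cdot)$ is convex, and by (L2) it satisfies $L(s,x,w)\leqslant\overline\theta_T(|w|)$ and $L(s,x,w)\geqslant-c_T$ for all $w$; hence for $|v|\leqslant\kappa$ and any unit vector $e$, convexity gives $L_v(s,x,v)\cdot e\leqslant L(s,x,v+e)-L(s,x,v)\leqslant\overline\theta_T(\kappa+1)+c_T$, so $|L_v(s,x,v)|\leqslant\Lambda(T):=\overline\theta_T(\kappa+1)+c_T$ uniformly in $(s,x)\in[0,T]\times\R^n$ and $|v|\leqslant\kappa$. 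Now take $(q,p)\in D^*u(t,x)$ with $(t,x)\in(0,T]\times\R^n$: by Proposition \ref{prop:D^*} there is a minimizer $\gamma$ of \eqref{eq:u(t,x)} with $p=L_v(t,x,\dot\gamma(t))$ and $q=-H(t,x,p)=-\big(p\cdot\dot\gamma(t)-L(t,x,\dot\gamma(t))\big)$; the two previous steps give $|p|\leqslant\Lambda(T)$ and, since $-c_T\leqslant L(t,x,\dot\gamma(t))\leqslant\overline\theta_T(\kappa)$, also $|q|\leqslant\Lambda(T)\kappa+\overline\theta_T(\kappa)+c_T$. Thus $|(q,p)|\leqslant F_0(T)$ for all reachable gradients, with $F_0(T)$ depending only on $T$ and $\mathrm{Lip}\,[u_0]$. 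Since $u$ is locally semiconcave (hence locally Lipschitz) on $(0,+\infty)\times\R^n$ by Proposition \ref{pro:properties of u}, at every differentiability point $Du(t,x)\in D^*u(t,x)$, so $|Du|\leqslant F_0(T)$ a.e. on the convex set $(0,T)\times\R^n$; by continuity of $u$ on $(0,+\infty)\times\R^n$ this makes $u$ an $F_0(T)$-Lipschitz function on $(0,T]\times\R^n$.

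The only non-routine point is the first step: one must make sure the velocity bound holds up to the terminal time $\tau=t$ and, crucially, does not degenerate as $t\to 0^+$. This is precisely where assumption (L3) is essential, since it is what controls the variation of the energy along extremals of a non-autonomous Tonelli Lagrangian (the autonomous case being handled in \cite{Cannarsa_Cheng3}). A more hands-on alternative, which bypasses Proposition \ref{prop:D^*}, is to first prove the space-Lipschitz bound directly from $u(t,x)-u(t,x')\leqslant A_{0,t}(z',x)-A_{0,t}(z',x')$ (with $z'$ a minimizing endpoint for $u(t,x')$) combined with the spatial regularity of the fundamental solution from Appendix \ref{Appendix A}, and then to obtain the time-Lipschitz bound from the dynamic programming identity $u(t,x)=\inf_{w}\{u(t',w)+A_{t',t}(w,x)\}$, testing with $w=x$ for the upper bound (using $A_{t',t}(x,x)\leqslant\overline\theta_T(0)(t-t')$) and with $w=\gamma(t')$ for the lower bound (using the space-Lipschitz bound together with $|\gamma(t')-x|\leqslant\kappa(t-t')$).
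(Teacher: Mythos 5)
Your proof is correct and follows the same skeleton as the paper's: control $|\xi(0)-x|/t$ by Lemma \ref{T - T + est1}, bound the action of the minimizer by comparison with the straight segment, extract a uniform bound on the terminal momentum $p(t)=\nabla u(t,x)$, deduce a bound on $u_t=-H(t,x,\nabla u)$ from the equation, and conclude via semiconcavity and a.e.\ differentiability. The one substantive difference is how the momentum bound is obtained. The paper works directly with the energy $E(s)=H(s,\xi(s),p(s))$ along the extremal: it bounds $E(0)$ using $p(0)\in D^-u_0(\xi(0))$ (so $|p(0)|\leqslant\mbox{\rm Lip}[u_0]$, via Lemma \ref{lem:p endpoint}), integrates $\dot E=-L_t$ using (L3) together with the action comparison to bound $E(t)$, and then recovers $|\nabla u(t,x)|$ from $E(t)$ by the coercivity of $H$. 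You instead invoke the uniform velocity bound of Proposition \ref{A.1} (which internally encapsulates essentially the same energy argument) and then bound $p=L_v(t,x,\dot\gamma(t))$ by convexity of $L(t,x,\cdot)$ combined with (L2). Both routes are valid and yield a constant depending only on $T$ and $\mbox{\rm Lip}[u_0]$; the paper's version is self-contained at the level of the appendix, while yours delegates the (L3)-dependent, non-degenerating-as-$t\to0^+$ step to Proposition \ref{A.1} --- which you correctly single out as the only non-routine ingredient.
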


Due to Lemma \ref{lem:estimation2} and Lemma \ref{T - T + est1}, we obtain the following estimation:
\begin{Cor}\label{T - T + est3}
For any $T>0$, $0\leqslant t_1<t_2\leqslant T$ and $x_1\in\R^n$, we have
\begin{align*}
	Y(u(t_2,\cdot),t_1,x_1,t_2)\neq\emptyset,
\end{align*}
and for any $y_{t_1,t_2,x_1}\in Y(u(t_2,\cdot),t_1,x_1,t_2)$,
\begin{equation}\label{lambda 2 est }
    |y_{t_1,t_2,x_1}-x_1|\leqslant\lambda_2(T)(t_2-t_1).
\end{equation}
where $\lambda_2(T):=\lambda_1(T,F_0(T))$ for $T>0$ with $\lambda_1$ defined in Lemma \ref{T - T + est1} and $F_0$ defined in Lemma \ref{lem:estimation2}.
\end{Cor}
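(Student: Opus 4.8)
The plan is to obtain the statement as a direct consequence of Lemma \ref{T - T + est1}(2), applied not to an arbitrary Lipschitz datum but to the spatial slice $f=u(t_2,\cdot)$, with the Lipschitz control on that slice supplied by Lemma \ref{lem:estimation2}. First I would fix $T>0$ and $0\leqslant t_1<t_2\leqslant T$, and observe that $t_2\in(0,T]$; hence, by Lemma \ref{lem:estimation2}, $u$ is Lipschitz on $(0,T]\times\R^n$ with $\mbox{\rm Lip}\,[u]\leqslant F_0(T)$, and in particular the function $x\mapsto u(t_2,x)$ is Lipschitz on all of $\R^n$ with $\mbox{\rm Lip}\,[u(t_2,\cdot)]\leqslant F_0(T)$. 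This places us exactly in the hypotheses of Lemma \ref{T - T + est1}, taking there $f=u(t_2,\cdot)$.

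Next I would invoke Lemma \ref{T - T + est1}(2) with this choice of $f$. It yields at once that $Y(u(t_2,\cdot),t_1,x_1,t_2)\neq\emptyset$ and that every $y_{t_1,t_2,x_1}$ in this set satisfies
\[
|y_{t_1,t_2,x_1}-x_1|\leqslant \lambda_1\bigl(T,\mbox{\rm Lip}\,[u(t_2,\cdot)]\bigr)\,(t_2-t_1).
\]
To finish, I would replace $\mbox{\rm Lip}\,[u(t_2,\cdot)]$ by the uniform bound $F_0(T)$ in the right-hand side, using the monotonicity of $K\mapsto\lambda_1(T,K)$: recalling the explicit form $\lambda_1(T,K)=\theta_T^*(K+1)+c_T+\overline\theta_T(0)$ and that $\theta_T$ is nondecreasing (so its Legendre transform, restricted to the nonnegative half-line, is nondecreasing in its argument as well), one gets $\lambda_1(T,\mbox{\rm Lip}\,[u(t_2,\cdot)])\leqslant\lambda_1(T,F_0(T))=:\lambda_2(T)$. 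This is precisely the estimate \eqref{lambda 2 est }.

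Since every step is a direct application of results already established, there is no genuine obstacle here; the only two points deserving a word of care are (i) that the slice $u(t_2,\cdot)$ is globally Lipschitz on $\R^n$ with constant at most $F_0(T)$ — this is where $t_2>0$ enters, and it is exactly the content of Lemma \ref{lem:estimation2} — and (ii) the elementary monotonicity of $\lambda_1(T,\cdot)$ used to pass from $\mbox{\rm Lip}\,[u(t_2,\cdot)]$ to the datum-independent bound $F_0(T)$. The degenerate case $t_1=0$ requires no modification, since $A_{0,t_2}(x_1,\cdot)$ and the operator $T^+_{0,t_2}$ are well defined for $t_1=0$.
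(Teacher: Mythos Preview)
Your proposal is correct and matches the paper's approach exactly: the paper states the corollary as an immediate consequence of Lemma~\ref{T - T + est1} and Lemma~\ref{lem:estimation2} without giving a separate proof, and your argument simply fleshes out that deduction by applying Lemma~\ref{T - T + est1}(2) to the slice $f=u(t_2,\cdot)$ with the Lipschitz bound $F_0(T)$ from Lemma~\ref{lem:estimation2}. One cosmetic remark: the monotonicity of $s\mapsto\theta_T^*(s)$ follows directly from the definition $\theta_T^*(s)=\sup_{r\geqslant 0}\{rs-\theta_T(r)\}$ (since $r\geqslant 0$), rather than from $\theta_T$ being nondecreasing.
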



\subsection{Propagation of singularities}
Recall that $x \in \R^n $ is a cut point of $u$ if no backward calibrated curve ending at $x$ can be extended beyond $x$. We denote by $\mbox{\rm{Cut}}\,(u)$ the set of cut points of $u$. It is well known that $\mbox{\rm{Sing}}\,(u) \subset \mbox{\rm{Cut}}\,(u) \subset \overline{\mbox{\rm{Sing}}\,(u)}$. In the following proposition \ref{propagation of singularitie}, we construct a singular arc starting from any cut point of $u(t,x)$.

\begin{Pro}\label{propagation of singularitie}
	Suppose $L$ is a Tonelli Lagrangian satisfying \mbox{\rm{(L1)-(L3)}}, $H$ is the associated Hamiltonian and $u_0:\R^n\to\R$ is a Lipschitz function.Then for any fixed $x \in \R^n $and $0<t_0 <T $, there exist $t_{x,T} \in (0,T) $ which only depends on $x,T$ such that for all $t_0 \leqslant t_1 \leqslant T-t_{x,T}  $ and $ x_1 \in B(x, \lambda_2(T) t_1 ) $, there is a unique maximum point $y_{t_1,t,x_1}$ of $u(t,\cdot )-A_{t_1,t}(x_1,\cdot)$ for $t\in [t_1,t_1+t_{x,T}]$ and the curve
		\begin{equation}
		\mathbf{y}_{t_1,t_1+t_{x,T},x_1}(t):=\begin{cases}
			x_1 \quad & \mbox{\rm{if}} \ \ t=t_1, \\
			y_{t_1, t,x_1} \quad & \mbox{\rm{if}} \ \ t \in ( t_1,t_1+t_{x,T}].
		\end{cases}
	\end{equation}
	satisfies $\mathbf{y}_{t_1,t_1+t_{x,T},x_1}(t)\in B(x, \lambda_2(T)T)$ for any $t\in[t_1,t_1+t_{x,T}]$, where $\lambda_2$ is defined in Corollary \ref{T - T + est3}. Moreover, if $(t_1,x_1) \in \mbox{\rm{Cut}}(u)$, then $(t,\mathbf{y}(t)) \in \mbox{\rm{Sing}}(u)$ for all $t\in[t_1,t_1+t_{x,T}]$.
\end{Pro}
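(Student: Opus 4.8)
The plan is to realize $\mathbf{y}$ as the curve swept out by the unique maximizer of the backward Lax--Oleinik operator applied to $u$ itself, i.e.\ to take $y_{t_1,t,x_1}$ to be the point of $Y(u(t,\cdot),t_1,x_1,t)$, and to obtain the uniform step $t_{x,T}$ by playing the interior semiconcavity of $u$ off against the uniform convexity of the fundamental solution on a fixed ball. Fix $x$ and $0<t_0<T$ and set $R:=|x|+\lambda_2(T)T+1$. By Corollary \ref{T - T + est3}, for all $0\le t_1<t\le T$ and $x_1\in\R^n$ the set $Y(u(t,\cdot),t_1,x_1,t)$ is nonempty and each of its points $y$ satisfies $|y-x_1|\le\lambda_2(T)(t-t_1)$; if moreover $x_1\in B(x,\lambda_2(T)t_1)$ then $|y-x|\le\lambda_2(T)(t-t_1)+\lambda_2(T)t_1=\lambda_2(T)t\le\lambda_2(T)T$. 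This yields at once the existence of a maximizer and the inclusion $\mathbf{y}_{t_1,t_1+t_{x,T},x_1}(t)\in\overline{B(x,\lambda_2(T)T)}\subset B(0,R)$ for every admissible $t_1,t,x_1$, so only uniqueness of the maximizer and the singularity statement remain.

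For uniqueness I would first fix, via Proposition \ref{pro:properties of u}(2), a constant $C_{x,T}:=C(t_0/2,T+1,R)\ge 0$ for which $u(t,\cdot)$ is semiconcave on $B(0,R)$ with constant $C_{x,T}$ for every $t\in[t_0,T]$; and, via the regularity of the fundamental solution collected in Appendix \ref{Appendix A} (proved as in the autonomous case \cite{Cannarsa_Cheng3}), constants $\tau_0=\tau_0(x,T)>0$ and $\kappa=\kappa(x,T)>0$ for which $y\mapsto A_{t_1,t}(x_1,y)-\tfrac{\kappa}{2(t-t_1)}|y|^2$ is convex on $B(0,R)$ whenever $0<t-t_1\le\tau_0$, $t_1\in[t_0,T]$ and $x_1\in B(0,R)$. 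Taking $t_{x,T}:=\min\{\tau_0,\ \kappa/(C_{x,T}+1),\ T/2\}\in(0,T)$, for $0<t-t_1\le t_{x,T}$ the function $y\mapsto u(t,y)-A_{t_1,t}(x_1,y)$ is, on $B(0,R)$, a concave function plus $\tfrac12\big(C_{x,T}-\tfrac{\kappa}{t-t_1}\big)|y|^2$, hence strictly concave there; since every maximizer over $\R^n$ lies in $\overline{B(x,\lambda_2(T)T)}\subset B(0,R)$, the maximizer $y_{t_1,t,x_1}$ is unique. This $t_{x,T}$ depends only on $x$ and $T$ (through $R$, $C_{x,T}$, $\tau_0$, $\kappa$); uniqueness together with the above compactness bound also makes $t\mapsto y_{t_1,t,x_1}$ continuous, so $\mathbf{y}$ is genuinely an arc, equal to $x_1$ at $t=t_1$.

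For the singularity statement I would argue by contradiction. Assume $(t_1,x_1)\in\mathrm{Cut}(u)$; the case $t=t_1$ is the hypothesis, so fix $t\in(t_1,t_1+t_{x,T}]$, write $y:=y_{t_1,t,x_1}$, and suppose $u$ is differentiable at $(t,y)$. Let $\xi\in\Gamma^{t_1,t}_{x_1,\cdot}$ be a maximizer for $T^{+}_{t_1,t}u(t,\cdot)(x_1)$ with $\xi(t)=y$; as a minimizer of $A_{t_1,t}(x_1,y)$ it is $C^2$ and solves \eqref{eq:H} on $[t_1,t]$ (Proposition \ref{pro:fundamental solution}), and Lemma \ref{lem:p endpoint}(2) gives $L_v(t,y,\dot\xi(t))\in D^{+}(u(t,\cdot))(y)$. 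Since $u$ is semiconcave near $(t,y)$ and differentiable there, $D^{+}(u(t,\cdot))(y)=\{D_xu(t,y)\}$ and $D^{*}u(t,y)=\{Du(t,y)\}$; hence $L_v(t,y,\dot\xi(t))=D_xu(t,y)$, and Proposition \ref{prop:D^*} furnishes a minimizer $\gamma\in\Gamma^{0,t}_{\cdot,y}$ of \eqref{eq:u(t,x)} with $L_v(t,y,\dot\gamma(t))=D_xu(t,y)$. Strict convexity of $L(t,y,\cdot)$ forces $\dot\gamma(t)=\dot\xi(t)$, so $\xi$ and $\gamma$ together with their momenta agree at $\tau=t$ and both solve the Hamiltonian system \eqref{eq:H}; by uniqueness for that ODE, $\gamma\equiv\xi$ on $[t_1,t]$, whence $\gamma(t_1)=x_1$. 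As a minimizer of \eqref{eq:u(t,x)}, $\gamma$ is calibrated on every subinterval, so $\gamma|_{[0,t_1]}$ is a backward calibrated curve ending at $(t_1,x_1)$ that $\gamma|_{[t_1,t]}$ extends beyond $t_1$ --- contradicting $(t_1,x_1)\in\mathrm{Cut}(u)$. Therefore $(t,\mathbf{y}(t))\in\mathrm{Sing}(u)$.

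The crux --- and what makes the time-dependent case harder than the autonomous one of \cite{Cannarsa_Cheng3} --- is the uniform choice of $t_{x,T}$ in the second paragraph. The semiconcavity constant of $u(t,\cdot)$ is only available on bounded space--time regions, which is exactly what forces $t_{x,T}$ to depend on $x$: one must commit to the ball $B(0,R)$ first, then check via the \emph{a priori} bounds of Lemma \ref{T - T + est1} and Corollary \ref{T - T + est3} that every $x_1\in B(x,\lambda_2(T)t_1)$ and every maximizer $y_{t_1,t,x_1}$ stays in that ball for all admissible $t_1\in[t_0,T-t_{x,T}]$ and $t\in[t_1,t_1+t_{x,T}]$, and match this against the $1/(t-t_1)$ growth of the Hessian of $A_{t_1,t}(x_1,\cdot)$ with constants uniform over that range --- precisely the estimates packaged in Appendix \ref{Appendix A}.
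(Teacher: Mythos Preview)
Your proposal is correct and follows essentially the same approach as the paper: obtain uniqueness of $y_{t_1,t,x_1}$ by balancing the semiconcavity constant of $u$ on a fixed ball (Proposition~\ref{pro:properties of u}(2)) against the $1/(t-t_1)$-uniform convexity of $A_{t_1,t}(x_1,\cdot)$ (Proposition~\ref{A.2}(3)), and prove the singularity assertion by the ``$p\in D^{+}u\setminus D^{*}u$'' mechanism, i.e.\ if the terminal momentum were reachable, the backward calibrated curve through $(t,y)$ would coincide with $\xi$ on $[t_1,t]$ by ODE uniqueness and hence extend a calibrated curve past $(t_1,x_1)$. The only cosmetic differences are your choice of reference ball $B(0,R)$ versus the paper's $B(x,\lambda_2(T)T)$, your explicit $t_{x,T}=\min\{\tau_0,\kappa/(C_{x,T}+1),T/2\}$ versus the paper's $t_{x,T}=C_2(x,T)/(2C(x,T))$, and your phrasing of the contradiction via differentiability rather than directly via $D^{*}u$; your formulation of the singularity step (using $\gamma$ only on $[0,t]$, which is exactly what Proposition~\ref{prop:D^*} provides) is in fact slightly cleaner than the paper's, which writes $\gamma:(-\infty,t]\to\R^n$.
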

\begin{proof}
	For any fixed $x\in \R^n , T>0$ , by proposition \ref{pro:properties of u}\,(2),  there exists $C(x,T ) >0$ such that it is a semiconcavity constant for $u$ on $ [t_0,t_0+T]\times  B(x,\lambda_2(T )  T ) $.
	 By (3) of Proposition \ref{A.2}, there exists $C_2(x,T)>0$ such that it is a uniformly convexity constant for $A_{t_1,t}(x_1,\cdot)$ on $ [t_0,t_0+T]\times  B(x,\lambda_2(T)  T ) $. 
	
	Therefore, $u(t,\cdot )-A_{t_1,t}(x_1,\cdot) $ is strictly concave on $\overline B (x,\lambda_2(T)  t) $  for all  $ t \in [t_1,t_1+t_{x,T}] $ provided that we further restrict $t_{x,T}$ in order to have
	$$
	t_{x,T}:= \frac{C_2(x,T) }{2 C(x,T)} .
	$$ 
	We now proof that $y_{t_1, t, x_1} $ is a singular point of $u$ for every $t\in (t_1,t_1+t_{x,T}] $. Let $\xi_{t_1,t,x_1 }\in \Gamma^{t_1,t}_{x_1,y_{t_1, t, x_1} } $ be the unique minimizer for $A_{t_1, t}(x_1,y_{t_1,t,x_1})$ and let
	$$
	p_{t_1,t,x_1}(s):=L_v(s, \xi_{t_1,t,x_1 }(s),\dot \xi_{t_1,t,x_1 }(s) ),\quad s\in [t_1,t_1+t_{x,T}],
	$$ 
	be the associated dual arc. We claim that
	\begin{equation}\label{D+ D* }
		p_{t_1,t,x_1}(t)\in D^+u(t,y_{t_1,t,x_1} ) \backslash D^* u(t ,y_{t_1,t,x_1} )
	\end{equation}

	which in turn yields $y_{t_1,t,x_1}\in \mbox{\rm{Sing}}(u) $. Indeed, if $p_{t_1,t,x_1}(t)\in D^* u(t,y_{t_1,t,x_1} ) $, then by Proposition \ref{prop:D^*}, there would exist a $C^2$ curve $\gamma_{t_1,t,x_1}:(-\infty,t]\rightarrow \R^n $ solving the minimum problem
	\begin{equation}\label{eq:min}
		\min_{\gamma\in W^{1,1}([\tau,t];\R^n )} \left \{ \int_\tau^t L(s,\gamma(s), \dot \gamma(s) ) \ ds +u(\gamma (\tau) ) : \gamma(t)=y_{t_1,t,x_1}   \right \}
	\end{equation}
	for all $\tau \leqslant t $. It is easily to checked that $\gamma_{t_1,t,x_1} $ and $\xi_{t_1,t,x_1} $ coincide on $[t_1,t]$ since both of them are extremal curves for $L$ and satisfy the same endpoint condition at $\gamma_{t_1, t ,x}$ i.e.
	$$
	L_v(t,\xi_{t_1,t,x_1}(t), \dot \xi_{t_1,t,x_1}(t) )= p_{t_1,t,x_1}(t)=L_v(t,\gamma_{t_1, t ,x}(t), \dot \gamma_{t_1, t ,x}(t) ).
	$$
	This leads to a contradiction since $(t_1, x_1 )\in \mbox{\rm{Cut}} (u) $ while $u$ should be smooth at $(t_1, \gamma_{t_1,t,x_1}(t_1) )$ and $\gamma_{t_1, t ,x}(\tau)$ is a backward calibrated curve for $\tau \in [t_1,t] $.
	Thus, \eqref{D+ D* } holds true and $ (t,\mathbf{y}_{t_1,t,x_1}(t)) \in \mbox{\rm{Sing}}(u) $ for all $t\in (t_1,t_1+t_{x,T}] $.
\end{proof}
By Proposition \ref{propagation of singularitie}, for any $ t>0$ and $x'\in \R^n $ , there exist a $t'>t$ which depends on $x'$ such that $\arg \sup_{y\in \R^n } \{ u(s, y)-A_{t,s}(x',y)   \} $ is singleton for any $t< s \leqslant t' $. We can denote that 
 $$
 Y(t, t',x'):=Y\big(u(t',\cdot ) ,t, t',x'\big) =\arg \sup_{y\in \R^n } \{ u(t', y)-A_{t,t'}(x',y)   \}=y_{t, t',x' }.
 $$
By \eqref{lambda 2 est }, it implies that
\begin{equation}\label{Phi Lip}
	|Y(t, t',x')-x'|\leqslant \lambda_2 (T)  (t'-t) \quad \forall \ t_0<t \leqslant t'\leqslant T .
\end{equation}
 
\begin{The}\label{propagation of singularitie global}
	 Suppose $L$ is a Tonelli Lagrangian satisfying \mbox{\rm{(L1)-(L3)}}, $H$ is the associated Hamiltonian and $u_0:\R^n\to\R$ is a Lipschitz function. Then for any fixed $(t_0,x)\in\mbox{\rm{Cut}}(u)$ and $T>t_0$, there exists a curve $\mathbf{x}:[t_0,+\infty)\to\R^n$ with $\mathbf{x}(t_0)=x$, such that $(s,\mathbf{x}(s))\in\mbox{\rm{Sing}}(u)$ for all $s\in [t_0,+\infty)$.
\end{The}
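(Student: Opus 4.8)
The plan is to build the global singular curve by iterating the local propagation result, Proposition~\ref{propagation of singularitie}, and then letting the horizon tend to infinity. Two features make the iteration run. First, the inclusion $\mathrm{Sing}(u)\subset\mathrm{Cut}(u)$ guarantees that the endpoint of each local singular arc produced by Proposition~\ref{propagation of singularitie} is again a cut point, so the proposition can be applied again from that endpoint. Second --- and this is exactly the content behind problem $\mathbf{A3}$ --- for a fixed base point $x$ and a fixed horizon $T$ the step length $t_{x,T}$ supplied by Proposition~\ref{propagation of singularitie} is the \emph{same} at every stage, because the arc remains confined to the fixed ball $B(x,\lambda_2(T)T)$ on $[t_0,T]$ and the semiconcavity constant of $u$ (Proposition~\ref{pro:properties of u}(2)) and the convexity constant of the fundamental solution (Proposition~\ref{A.2}(3)) on that fixed compact set do not degrade as one moves along the arc.

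\emph{Iteration at a fixed horizon.} Fix $T>t_0$ and set $t_k:=t_0+k\,t_{x,T}$, $x_0:=x$, $x_{k+1}:=\mathbf{y}_{t_k,t_{k+1},x_k}(t_{k+1})$. I would show by induction that, as long as $t_k\leqslant T-t_{x,T}$, one has $(t_k,x_k)\in\mathrm{Cut}(u)$ and $x_k\in B(x,\lambda_2(T)t_k)$. The base case is the hypothesis $(t_0,x)\in\mathrm{Cut}(u)$. For the inductive step, Proposition~\ref{propagation of singularitie} with starting data $(t_k,x_k)$ gives the arc $s\mapsto\mathbf{y}_{t_k,s,x_k}(s)$ on $[t_k,t_{k+1}]$, contained in $B(x,\lambda_2(T)T)$ and lying in $\mathrm{Sing}(u)$ there; hence its endpoint $x_{k+1}\in\mathrm{Sing}(u)\subset\mathrm{Cut}(u)$, while \eqref{Phi Lip} yields $|x_{k+1}-x_k|\leqslant\lambda_2(T)(t_{k+1}-t_k)$ and therefore $|x_{k+1}-x|\leqslant\lambda_2(T)t_{k+1}$, closing the induction. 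Concatenating these finitely many arcs produces a continuous curve $\mathbf{x}_T$, singular on an interval $[t_0,T_*]$ with $T_*\in(T-t_{x,T},T]$ and with $\mathbf{x}_T(t_0)=x$; the treatment of the final, short subinterval and the precise bookkeeping of the step count are routine.

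\emph{Passing to $+\infty$.} Because $t_{x,T}$ is (by its definition as a convexity constant over a semiconcavity constant on a growing domain) non-increasing in $T$, we have $T-t_{x,T}\to+\infty$, so the intervals $[t_0,T_*]$ exhaust $[t_0,+\infty)$ as $T\to+\infty$. These curves are mutually consistent: for $t$ sufficiently close to $s$ the function $u(t,\cdot)-A_{s,t}(\mathbf{x}(s),\cdot)$ is \emph{strictly} concave on the relevant ball (this is shown in the proof of Proposition~\ref{propagation of singularitie}), so the continuation of the singular arc past a point $(s,\mathbf{x}(s))\in\mathrm{Cut}(u)$ is \emph{uniquely} determined by $\mathbf{x}(t)=\arg\max_{y}\{u(t,y)-A_{s,t}(\mathbf{x}(s),y)\}$, a rule that does not involve the horizon. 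Hence setting $\mathbf{x}(s):=\mathbf{x}_T(s)$ for any $T>s$ gives a well-defined curve on $[t_0,+\infty)$ with $\mathbf{x}(t_0)=x$ and $(s,\mathbf{x}(s))\in\mathrm{Sing}(u)$ for all $s\geqslant t_0$.

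The substantive ingredient is Proposition~\ref{propagation of singularitie}, engineered precisely so that $t_{x,T}$ does not depend on the current position on the arc, which is the answer to $\mathbf{A3}$ in the time-dependent case; once it is in hand, the remaining work is light. The one place that really needs care is the consistency of the pieces $\mathbf{x}_T$, which hinges on the uniqueness of the maximizer and is the only point where strict rather than merely weak concavity of $u(t,\cdot)-A_{s,t}(x_1,\cdot)$ is used --- I expect this to be the main conceptual obstacle. Propagating the two induction hypotheses (the cut-point property via $\mathrm{Sing}(u)\subset\mathrm{Cut}(u)$, the confinement via \eqref{Phi Lip}) and the step-count bookkeeping are routine.
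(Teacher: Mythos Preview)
Your fixed-horizon iteration is correct and matches the paper's Steps~I--II: you iterate Proposition~\ref{propagation of singularitie} with the uniform step $t_{x,T}$, maintaining the confinement $x_k\in B(x,\lambda_2(T)t_k)$ via \eqref{Phi Lip} and the cut-point property via $\mathrm{Sing}(u)\subset\mathrm{Cut}(u)$.

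The gap is in your passage to $+\infty$. You claim that the curves $\mathbf{x}_T$ for different horizons are mutually consistent because ``the continuation past $(s,\mathbf{x}(s))$ is uniquely determined by $\arg\max_y\{u(t,y)-A_{s,t}(\mathbf{x}(s),y)\}$, a rule that does not involve the horizon.'' But this rule depends on the \emph{restart time} $s$, not only on the current position $\mathbf{x}(s)$, and the restart times for $\mathbf{x}_T$ and $\mathbf{x}_{T'}$ differ whenever $t_{x,T}\neq t_{x,T'}$. The sup-convolution map has no semigroup property: in general $Y(s_1,t,Y(s_0,s_1,x_0))\neq Y(s_0,t,x_0)$, so concatenations with different breakpoints can produce genuinely different arcs. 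Strict concavity gives uniqueness of the maximizer \emph{for a fixed base point}; it does not give independence from the choice of base point. Hence ``$\mathbf{x}(s):=\mathbf{x}_T(s)$ for any $T>s$'' is not well-defined as stated.

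The paper sidesteps this by never comparing curves built at different horizons: it constructs a \emph{single} curve by successive extension. On $\Omega_1=B(x,\lambda_2(T)T)$ it iterates with step $t_1=t_{x,T}$; once the construction reaches time $\approx T$, the current endpoint lies in $\Omega_1\subset\Omega_2=B(x,\lambda_2(2T)\,2T)$, so Proposition~\ref{propagation of singularitie} applied with the larger ball supplies a (possibly smaller) step $t_2=t_{x,2T}$, and the iteration continues \emph{from the current endpoint}, not from $x$. Repeating, one obtains a single arc on $[t_0,\bar t)$ with $\bar t=t_0+\sum_i k_it_i$, and a short telescoping estimate (the paper's Step~III) shows $\bar t\geqslant nT-t_1\to\infty$. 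The correct fix to your argument is therefore not to prove consistency --- which fails --- but to extend one curve, switching the ambient ball and step size as the horizon grows.
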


\begin{proof}
For any fixed $(t_0, x) \in \mbox{\rm{Sing}}(u) $ and $T>0$, we denote that 
$$
 \{\Omega_n\}_{n\in \N}:= B(x, \lambda_2(nT) \  n T ) ,
$$  
with $\overline \Omega_n \subset \Omega_{n+1} $ and $\overline \Omega_n $ is compact for all $n$, in addition, $\R^n= \bigcup_n \overline \Omega_n $.

\medskip
\noindent $\mathbf{Step \ I}$ :Uniform Lipschitz estimation of connections of $ Y $.

For any $s>0$, there are a sequence of points $\{ x_j\}_{1\leqslant j \leqslant n}$ and time $\{ s_j \}_{1\leqslant j \leqslant n} $ with $x_j=Y(s_{j-1},s_j,x_{j-1}) $  for any $1\leqslant j \leqslant n $ and $x_0=x$, for $t_0\leqslant s_1\leqslant s_2 \leqslant \dots \leqslant s_n \leqslant s \leqslant  \big\lceil  \frac{s}{T} \big\rceil   T $.

By \eqref{Phi Lip} , we have that
\begin{align*}
	| Y(s_n , s, x_n)-x |\leqslant &\, |Y(s_n,s,x_n)-x_n|+ \sum_{j=1}^n |Y(s_{j-1},s_j,x_{j-1} ) |\\
	\leqslant &\, \lambda_2 \left(\big\lceil  \frac{s}{T} \big\rceil    T  \right) \left(s-s_n+\sum^n_{j=1} (s_j-s_{j-1})    \right) \\
	=&\, \lambda_2 \left(  \big\lceil  \frac{s}{T} \big\rceil   T \right)(s-t_0),
\end{align*}
which means that for any $1\leqslant j \leqslant n $, 
$$
Y(s_j , s, x_j) \in \Omega_{\lceil  \frac{s}{T}\rceil }, \quad   s \in [s_{j},s_{j+1}], 
$$ 
i.e. $| Y(s_j , s, x_j)-x |\leqslant \lambda_2 \left( \big\lceil  \frac{s}{T} \big\rceil    T \right)(s-t_0)  $ for any $1\leqslant j \leqslant n $.

\medskip
\noindent $\mathbf{Step \ II}$: Construction of curve $\mathbf{x} $.  

For $x_1:=x\in \mbox{\rm{Cut}}(u) \cap \Omega_1 $ without loss of generality, then there exists $t_1:=t_{x,T}>0$ such that $Y(t_0,\cdot,x) $ is defined on $[t_0,t_0+t_1]$ by Proposition \ref{propagation of singularitie}. One can extend $Y$ by induction.

For $ Y(t_0,t_0+t_1,x)\in \Omega_1 $, then we define $Y(t_0,s, x)=Y(t_0+t_1,s,Y(t_0,t_0+t_1,x) )$ for all $s\in [t_0+t_1,t_0+2 t_1]  $ ; inductively, if $Y(t_0,\cdot,x)$ is defined on $[t_0,t_0+k t_1]$ such that $Y(t_0,t_0+kt_1,x)\in \Omega_1 $ , then we define that $Y(t_0,s, x)=Y(t_0+kt_1,s, Y(t_0,t_0+kt_1,x) )$ for all $s\in [t_0+kt_1,t_0+(k+1)t_1 ] $. Now, let
\begin{equation}\label{def k_1}
	k_1=\Big\lfloor \frac{T-t_0}{t_1} \Big\rfloor ,
\end{equation}
then $t_0+ k_1 t_1 \leqslant T $ which implies $Y(t_0,s,x)\in \Omega_1$ for any $s \in [t_0,t_0+k_1t_1 ] $ by Step I .

In a similar way, $x_2:=Y(t_0,t_0+k_1t_1 ,x )\in \Omega_1 \cap \mbox{\rm{Sing}}(u) \subset \Omega_2 \cap \mbox{\rm{Sing}}(u) $, we define   
$$
k_2=\Big\lfloor \frac{2T-k_1 t_1-t_0 }{t_2} \Big\rfloor ,
$$
where  $t_2:=t_{x,2T} \leqslant t_1 $ is determined by applying Proposition \ref{propagation of singularitie} to $\Omega_2$.We also conclude that $t_0+k_1t_1+k_2t_2 \leqslant 2T $ which implies that $Y(t_0,s,x)\in \Omega_2 $ for all $s\in [t_0, t_0+k_1t_1+k_2t_2 ] $ by Step I.

Therefore, by induction, for any $i \in \N $, there exists $t_i:=t_{x,iT} \leqslant t_{i-1} $ is determined by applying Proposition \ref{propagation of singularitie} to $\Omega_i$ with $0 < t_i\leqslant t_{i-1} $ , let 
\begin{equation}\label{def k_i}
	k_i=\Big\lfloor \frac{iT-\sum_{j=1}^{i-1} k_jt_j-t_0 }{t_i} \Big\rfloor ,
\end{equation}

We also conclude that $t_0+\sum_{j=1}^{i} k_jt_j \leqslant iT $ which implies that $Y(t_0,s,x)\in \Omega_i $ for all $s \in [t_0, t_0+\sum_{j=1}^{i} k_jt_j  ] $ by Step I.

 Denote that
 $
 x_i:=Y(t_0,t_0+\sum_{j=0}^{i-1} k_j t_j ,x) \in \Omega_{i-1} \cap \mbox{\rm{Sing}}(u) \subset \Omega_{i} \cap \mbox{\rm{Sing}}(u)  . 
 $ 
 This makes us to define an arc $\mathbf{x}:[0, \overline t )\rightarrow \R^n  $ by 
 $$
 \mathbf{x}(s):=Y(t_0,s,x)=Y(t_0+\sum_{j=0}^{i-1} k_j t_j,s,x_i  ) \quad \forall s\in \Big[ t_0+\sum_{j=0}^{i-1} k_j t_j, t_0+\sum_{j=0}^{i} k_j t_j \Big]
 $$
where $\overline t := t_0+\sum^\infty_{j=0} k_j t_j $. It is clear that $\mathbf{x} $ is a generalized characteristic defined on $[0,\overline t)$ and $\mathbf{x}(s) \in \mbox{\rm{Sing}}(u) $ for all $s\in [0,\overline t ) $, by Proposition \ref{propagation of singularitie}.

\medskip  
\noindent  $\mathbf{Step\ III} $: Estimation of time $\overline t $.
  
  To finish the proof, we only need to show that $\overline t =\infty $. Indeed, since $ T> t_1 \geqslant t_2 \geqslant t_3 \geqslant \dots $, we have that 
  \begin{align*}
  	\overline t > &\,t_0+ \sum^n_{j=1} k_j t_j  =t_0+ \sum^{n-1}_{j=1}k_j t_j+  \Big\lfloor \frac{nT-\sum_{j=1}^{n-1} k_j t_j -t_0}{t_n} \Big\rfloor t_n \\
  	\geqslant &\, t_0+ \sum^{n-1}_{j=1}k_j t_j+  \left( \frac{nT-\sum_{j=1}^{n-1} k_j t_j-t_0 }{t_n} -1 \right) t_n   \\
  	=&\, nT-t_n \geqslant nT-t_1   \rightarrow \infty \quad  \text{as} \ n \rightarrow \infty.
  \end{align*} 
  Therefore $ \overline t= \infty $.
\end{proof}

\begin{Rem}
	Example 5.6.7 of \cite{Cannarsa_Sinestrari_book} showed that there exists a counterexample for global propagation of singularities   without condition (L2).
\end{Rem}

\subsection{Local Lipschitz of  singular curve }
 \begin{enumerate}
	\item [\rm{\textbf{(A)}}] For any given $T\in \R^+$, there exists $K(T) \in \R^+$  such that viscosity solution $u(t ,x)$ of \eqref{eq:HJ} is differentiable on $(t,x)\in [0,T]\times \R^n $ and
	$$
	|D_t u(t,x)-D_s u(s,y) |\leqslant K(T)\Big(|t-s|+|x-y|\Big),\quad \forall x,y\in \R^n, s,t \in [0,T].
	$$
\end{enumerate} 

\begin{The}
	If  condition \rm{\textbf{(A)}}  holds, then for any $T>0$ and  $\mathbf{x}(s)$ (see Theorem \ref{propagation of singularitie global}) is a Lipschitz curve on $s\in [0,T]$.
\end{The}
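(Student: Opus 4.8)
My plan is to run the argument arc‑by‑arc along the concatenation that produces $\mathbf{x}$, using condition \textbf{(A)} precisely at the one place where the time dependence of $u$ intervenes. \emph{Reduction to a single arc.} Recall from the proof of Theorem \ref{propagation of singularitie global} that on any interval $[t_0,T]$ the curve $\mathbf{x}$ is a \emph{finite} concatenation of arcs of the form $s\mapsto Y(\sigma,s,z)=\arg\max_y\{u(s,y)-A_{\sigma,s}(z,y)\}$, each defined on some $[\sigma,\sigma+\tau]$ with $\tau\le t_{x,T}$ and with image in one of the fixed compact sets $\Omega_j$ of that proof (only finitely many $j$ occur). Since a finite concatenation of curves that agree at the breakpoints and are each $\mathcal L$‑Lipschitz is again $\mathcal L$‑Lipschitz, it suffices to produce a constant $\mathcal L=\mathcal L(T)$, independent of $\sigma,z$ and of which $\Omega_j$ is used, such that $|Y(\sigma,s_1,z)-Y(\sigma,s_2,z)|\le \mathcal L\,|s_1-s_2|$ on the interval of definition of each arc.

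\emph{Stability of the maximizer via uniform concavity.} Fix such an arc and write $y(s)=Y(\sigma,s,z)$ and $g_s(\cdot)=u(s,\cdot)-A_{\sigma,s}(z,\cdot)$. By the proof of Proposition \ref{propagation of singularitie}, $g_s$ is uniformly concave on the relevant closed ball with a modulus $\mu=\mu(\Omega_j)>0$ that is independent of $s,\sigma,z$ — this is exactly what the choice of $t_{x,T}$ secures. Comparing $g_{s_1}(y(s_1))\ge g_{s_1}(y(s_2))$ with $g_{s_2}(y(s_2))\ge g_{s_2}(y(s_1))$ and using $\mu$‑concavity at each maximum point gives the standard stability estimate
$$
\mu\,|y(s_1)-y(s_2)|^2\ \le\ \big(g_{s_1}-g_{s_2}\big)\big(y(s_1)\big)-\big(g_{s_1}-g_{s_2}\big)\big(y(s_2)\big)\ \le\ \mbox{\rm{Lip}}\big[g_{s_1}-g_{s_2}\big]\cdot|y(s_1)-y(s_2)|,
$$
where the Lipschitz seminorm is taken along the segment joining $y(s_1)$ and $y(s_2)$; hence $|y(s_1)-y(s_2)|\le\mu^{-1}\mbox{\rm{Lip}}[g_{s_1}-g_{s_2}]$. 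Thus everything reduces to the bound $\mbox{\rm{Lip}}[g_{s_1}-g_{s_2}]\lesssim|s_1-s_2|$.

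\emph{Where \textbf{(A)} enters.} Split $g_{s_1}-g_{s_2}=\Delta u-\Delta A$ with $\Delta u=u(s_1,\cdot)-u(s_2,\cdot)$ and $\Delta A=A_{\sigma,s_1}(z,\cdot)-A_{\sigma,s_2}(z,\cdot)$. For $\Delta u$: by Lemma \ref{lem:estimation2} the map $u(\cdot,y)$ is Lipschitz in time, and by \textbf{(A)} the derivative $\partial_t u$ exists on $[0,T]\times\R^n$ and is $K(T)$‑Lipschitz there, so $u(s_1,y)-u(s_2,y)=\int_{s_2}^{s_1}\partial_t u(\tau,y)\,d\tau$ yields $\mbox{\rm{Lip}}[\Delta u]\le K(T)\,|s_1-s_2|$. (Without \textbf{(A)} one only gets $\mbox{\rm{Lip}}[\Delta u]\le 2F_0(T)$, and the stability estimate degrades to a pointwise bound with no factor $|s_1-s_2|$ — this is why the extra hypothesis is required.) For $\Delta A$ I would invoke the regularity of the fundamental solution collected in Appendix \ref{Appendix A}: on the cone $\{|y-z|\le\lambda_2(T)(s-\sigma)\}$, which by Corollary \ref{T - T + est3} contains every $y(s)$ of the arc, $D_yA_{\sigma,s}(z,\cdot)$ is smooth and its $s$‑derivative is controlled by the Lagrangian data, and integration in $s$ gives a bound on $\mbox{\rm{Lip}}[\Delta A]$ along the relevant segment.

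\emph{Main obstacle.} The delicate point is the \emph{uniformity} of the $\Delta A$ bound near the left endpoint $s=\sigma$ of an arc, where $\partial_s D_yA_{\sigma,s}(z,\cdot)$ is of size $\sim(s-\sigma)^{-1}$, so the crude estimate degenerates. I would handle this by a dichotomy. When $s_1-\sigma$ and $s_2-\sigma$ are both bounded below by a fixed multiple of $|s_1-s_2|$ — i.e. away from the degenerate scaling — the concavity estimate above, with $(s_i-\sigma)^{-1}$ under control, already gives $|y(s_1)-y(s_2)|\le\mathcal L\,|s_1-s_2|$. When instead $\min(s_1,s_2)$ lies within a bounded multiple of $|s_1-s_2|$ of $\sigma$, one discards the concavity estimate and uses directly the a priori bound $|Y(\sigma,s,z)-z|\le\lambda_2(T)(s-\sigma)$ of Corollary \ref{T - T + est3} with the triangle inequality (a genuine Lipschitz estimate at the left endpoint). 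Reconciling the overlap of the two regimes so the constants can be chosen compatibly — depending on $T$ only through $\mu$, $K(T)$, $\lambda_2(T)$ and the diameter of $\Omega_j$ — and verifying that the parabolic rescaling of $A_{\sigma,s}$ near $\sigma$ does not spoil the confinement is the technical heart of the proof; everything else is routine once this is arranged. (As an alternative one may instead use \textbf{(A)} to establish absolute continuity of each arc and the differential inclusion $\dot{\mathbf y}(s)\in\mathrm{co}\,H_p(s,\mathbf y(s),D^+_xu(s,\mathbf y(s)))$, and then bound $|\dot{\mathbf y}|$ by $\sup|H_p|$ on the compact set where $D^+_xu$ is bounded by $F_0(T)$.)
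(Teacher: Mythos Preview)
Your overall strategy—stability of the maximizer under a concavity modulus, combined with condition~\textbf{(A)}—is in the same spirit as the paper's, but the execution has a gap that the dichotomy you propose cannot close.

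The problem is your insistence on a concavity modulus $\mu$ \emph{independent of $s$}. In your ``away'' regime $\min_i(s_i-\sigma)\ge M|s_1-s_2|$, the bound $\mathrm{Lip}[\Delta A]\le C_3\bigl(\min_i(s_i-\sigma)\bigr)^{-1}|s_1-s_2|$ yields only $\mathrm{Lip}[\Delta A]\le C_3/M$, and your stability estimate becomes
\[
|y(s_1)-y(s_2)|\ \le\ \mu^{-1}\Bigl(K(T)\,|s_1-s_2|+\tfrac{C_3}{M}\Bigr),
\]
which carries a constant term, not a Lipschitz bound. No uniform choice of $M$ removes this term for small $|s_1-s_2|$, so the two regimes never overlap; the ``technical heart'' you flag is in fact an obstruction, not a difficulty.

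What repairs the argument—and what the paper does—is to keep the \emph{$s$-dependent} modulus: $A_{\sigma,s}(z,\cdot)$ is $C_2/(s-\sigma)$-uniformly convex, so $g_s$ is at least $C_2/\bigl(2(s-\sigma)\bigr)$-concave on the arc. That blow-up exactly cancels the $C_3/(s-\sigma)$ in $\mathrm{Lip}[\Delta A]$, the quotient stays bounded, and no dichotomy is needed. Concretely, the paper writes this as the monotonicity inequality $\tfrac{C_2}{t-\sigma}|\mathbf y(t)-\mathbf y(s)|^2\le\langle p_t-p,\mathbf y(t)-\mathbf y(s)\rangle$ with $p_t=D_yA_{\sigma,t}(z,\mathbf y(t))$, $p=D_yA_{\sigma,t}(z,\mathbf y(s))$, splits $p_t-p=(p_t-p_s)+(p_s-p)$, controls $p_s-p$ by the $C^{1,1}$ bound of Appendix~\ref{Appendix A}, and handles $p_t-p_s$ via the space--time semiconcavity of $u$ together with~\textbf{(A)} for the time component $q_t-q_s$. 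Your treatment of $\Delta u$ through~\textbf{(A)} is correct; it is only the $\Delta A$/concavity balance that requires the time-dependent modulus.

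The alternative you sketch—absolute continuity plus the differential inclusion $\dot{\mathbf y}\in\mathrm{co}\,H_p$—presupposes precisely the kind of Lipschitz estimate you are trying to prove, so it does not furnish an independent route.
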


The proof of the theorem above is a direct consequence of following Lemma.

\begin{Lem}\label{Lipschitz lem0}
	For any $T>0$ and $(t_0,x)\in[0,T]\times\R^n$, let $t_{x,T}$ and $\mathbf{y}:[t_0,t_0+t_{x,T}]\to \R^n$ be given by Proposition \ref{propagation of singularitie}. If condition (A) holds, then $\mathbf{y}$ is Lipschitz on $[t_0,t_0+t_{x,T}]$.
\end{Lem}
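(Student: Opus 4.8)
Write $t_1=t_0$, $x_1=x$ and $g_t(\cdot):=u(t,\cdot)-A_{t_1,t}(x_1,\cdot)$, so that by Proposition \ref{propagation of singularitie} the curve $\mathbf y(t)=y_{t_1,t,x_1}$ is, for each $t\in(t_1,t_1+t_{x,T}]$, the unique maximiser of $u(t,\cdot)-A_{t_1,t}(x_1,\cdot)$, it lies in $B(x,\lambda_2(T)T)$, and $|\mathbf y(t)-x_1|\le\lambda_2(T)(t-t_1)$. The proof of Proposition \ref{propagation of singularitie}, through the choice $t_{x,T}=\frac{C_2(x,T)}{2C(x,T)}$, in fact yields a \emph{uniform} modulus of strong concavity: there is $\mu=\mu(x,T)>0$, independent of $t\in(t_1,t_1+t_{x,T}]$, with $g_t(y)\le g_t(\mathbf y(t))-\tfrac{\mu}{2}|y-\mathbf y(t)|^2$ for all $y\in\overline B(x,\lambda_2(T)t)$. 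The plan is to convert this uniform strong concavity into a Lipschitz estimate for $t\mapsto\mathbf y(t)$.

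First I would write the comparison inequality. Fix $t_1<t'<t\le t_1+t_{x,T}$ (the opposite order being symmetric); since $\mathbf y(t')\in\overline B(x,\lambda_2(T)t')\subset\overline B(x,\lambda_2(T)t)$, applying the strong concavity inequality above to $g_t$ at $y=\mathbf y(t')$ and to $g_{t'}$ at $y=\mathbf y(t)$ and adding gives, with $\Phi:=g_t-g_{t'}$,
\[
\mu\,|\mathbf y(t)-\mathbf y(t')|^2\ \le\ \Phi(\mathbf y(t))-\Phi(\mathbf y(t'))\ \le\ \mathrm{Lip}\big[\Phi;\overline B(x,\lambda_2(T)t)\big]\,|\mathbf y(t)-\mathbf y(t')|.
\]
Thus it suffices to prove that $\Phi$ is Lipschitz on $\overline B(x,\lambda_2(T)t)$ with constant $\le\Lambda(x,T)\,|t-t'|$: then $|\mathbf y(t)-\mathbf y(t')|\le(\Lambda/\mu)\,|t-t'|$, and together with $|\mathbf y(t)-\mathbf y(t_1)|=|y_{t_1,t,x_1}-x_1|\le\lambda_2(T)(t-t_1)$ this shows $\mathbf y$ is Lipschitz on $[t_1,t_1+t_{x,T}]$ with constant $\max\{\lambda_2(T),\Lambda/\mu\}$.

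To establish the Lipschitz bound for $\Phi$ I would split
\[
\Phi(y)=\big[u(t,y)-u(t',y)\big]-\big[A_{t_1,t}(x_1,y)-A_{t_1,t'}(x_1,y)\big]
\]
and control the spatial gradient of each bracket by $O(|t-t'|)$, uniformly on $\overline B(x,\lambda_2(T)t)$. For the action bracket, $D_yA_{t_1,s}(x_1,y)=L_v(s,y,\dot\eta_s(s))$ for the minimiser $\eta_s$ joining $(t_1,x_1)$ to $(s,y)$, and the constraint $|y-x_1|\le\lambda_2(T)(s-t_1)$ keeps the velocities $\dot\eta_s$ in a fixed compact set, so nothing blows up as $s\downarrow t_1$; the regularity results for the fundamental solution collected in Appendix \ref{Appendix A} then give that $s\mapsto D_yA_{t_1,s}(x_1,y)$ is Lipschitz, uniformly in $y$ on the ball — this is exactly the computation already carried out in the autonomous setting of \cite{Cannarsa_Cheng3}. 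For the solution bracket $u(t,\cdot)-u(t',\cdot)$ one uses condition \textbf{(A)}: since under \textbf{(A)} $u$ is $C^1$ with $D_tu$ $K(T)$-Lipschitz on $[0,T]\times\R^n$ and $u$ solves $D_tu+H(t,x,D_xu)=0$ pointwise, the time-Lipschitz bound on $D_tu$ is transferred to a time-Lipschitz bound on $D_xu$ via the uniform convexity of $H(t,x,\cdot)$ (from (L1)) and the one-sided Lipschitz bound on $z\mapsto D_xu(s,z)$ given by the semiconcavity of $u$, on the compact set $\{(s,z,D_xu(s,z)):s\in[0,T],\ z\in\overline B(x,\lambda_2(T)T)\}$ on which $H$ has bounded $C^2$ norm; this gives $|D_xu(t,y)-D_xu(t',y)|\le C(x,T)|t-t'|$ for $y\in\overline B(x,\lambda_2(T)t)$ and closes the estimate.

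I expect the last step — upgrading the time-regularity of $D_tu$ supplied by \textbf{(A)} to time-regularity of the spatial gradient $D_xu$ — to be the main obstacle. In the autonomous case of \cite{Cannarsa_Cheng3} the solution $u$ does not depend on $t$, so the bracket $u(t,\cdot)-u(t',\cdot)$ is simply absent and only the routine action estimate survives; the genuine $t$-dependence of $u$ here is precisely what forces an extra hypothesis to be imposed. A secondary point to watch is that all the constants ($\mu$, the convexity constant of $A_{t_1,t}(x_1,\cdot)$, and the various Lipschitz constants) must be taken uniformly as $s\downarrow t_1$, which is why the a priori bound $|y_{t_1,s,x_1}-x_1|\le\lambda_2(T)(s-t_1)$ of Corollary \ref{T - T + est3} is used throughout to stay in a fixed compact region of phase space.
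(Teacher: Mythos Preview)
Your overall strategy—using uniform strong concavity of $g_t$ to reduce the Lipschitz estimate for $\mathbf y$ to a Lipschitz bound on $\Phi=g_t-g_{t'}$—is a legitimate alternative to the paper's argument, but your treatment of the solution bracket has a genuine gap. You write that under condition \textbf{(A)} the solution $u$ is $C^1$ and solves the equation pointwise, and then try to transfer the time-Lipschitz bound on $D_tu$ to a time-Lipschitz bound on $D_xu$ via the convexity of $H$. Both steps fail. First, condition \textbf{(A)} only asserts that $D_tu$ exists and is Lipschitz; it says nothing about $D_xu$, and indeed $D_xu$ cannot exist everywhere—if it did, $\mbox{Sing}(u)$ would be empty and the lemma vacuous. (In the discounted application $u(t,x)=e^{\lambda t}v(x)$, so $D_tu=\lambda u$ exists everywhere even though $v$ has singularities.) Second, even at differentiability points, convexity of $H(t,x,\cdot)$ does not let you recover $|p_1-p_2|$ from $|H(t,x,p_1)-H(t,x,p_2)|$: for $H(p)=\tfrac12|p|^2$ one has $H(p)=H(-p)$.

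There is, however, a direct fix that makes your scheme work: you do not need time-Lipschitz control of $D_xu$, only the spatial Lipschitz constant of $y\mapsto u(t,y)-u(t',y)$. Since condition \textbf{(A)} makes $D_su(s,\cdot)$ Lipschitz with constant $K(T)$ for each $s$, writing $u(t,y)-u(t',y)=\int_{t'}^{t}D_su(s,y)\,ds$ gives $\mathrm{Lip}\big[u(t,\cdot)-u(t',\cdot)\big]\le K(T)\,|t-t'|$, which is exactly the bound you need for that bracket.

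The paper's proof takes a different and somewhat shorter route that avoids estimating $\mathrm{Lip}[\Phi]$ altogether. It works at the level of supergradients: with $p_t=D_yA_{t_1,t}(x_1,\mathbf y(t))$, $p_s=D_yA_{t_1,s}(x_1,\mathbf y(s))$ and $p=D_yA_{t_1,t}(x_1,\mathbf y(s))$, the uniform convexity of $A_{t_1,t}(x_1,\cdot)$ bounds $\langle p_t-p,\mathbf y(t)-\mathbf y(s)\rangle$ from below; the piece $\langle p_s-p,\mathbf y(t)-\mathbf y(s)\rangle$ is controlled by the $C^{1,1}$ regularity of $A$ from Proposition~\ref{A.2}; and for $\langle p_t-p_s,\mathbf y(t)-\mathbf y(s)\rangle$ the paper uses the \emph{space-time} semiconcavity of $u$, which gives
\[
\langle p_t-p_s,\mathbf y(t)-\mathbf y(s)\rangle+\langle q_t-q_s,t-s\rangle\le C_1\big(|t-s|^2+|\mathbf y(t)-\mathbf y(s)|^2\big),
\]
with $q_\cdot=D_tu(\cdot,\mathbf y(\cdot))$, and then condition \textbf{(A)} bounds $|q_t-q_s|$. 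So in the paper's argument condition \textbf{(A)} enters only through the time-derivative $D_tu$, never through $D_xu$—which is precisely the point you should reorganise your proof around.
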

\begin{proof}
Without loss of generality, we assume that $t_0=0$. Let $\xi_{t}:=\xi_{0,t,x}\in \Gamma_{x,\mathbf{y}(t)}^{0,t}$, $\xi_{s}:=\xi_{0,s,x}\in \Gamma_{x,\mathbf{y}(s)}^{0,s }$ and $\eta:=\xi_{0,t,x}\in \Gamma_{x,\mathbf{y}(s)}^{0,t }$ be minimizers for $A_{0,t}(x,\mathbf{y}(t)),A_{0,s}(x,\mathbf{y}(s)) $ and $A_{0,t}(x,\mathbf{y}(s))$ respectively. Setting $p_t=L_v(t ,\xi_t(t),\dot \xi_t(t)),p_s=L_v(s ,\xi_s(s),\dot \xi_s(s))$ and $p=L_v(t ,\eta(t),\dot \eta(t))$,
we have $(q_s,p_s)\in D^+u(s,\mathbf{y}(s))$ and $(q_t,p_t)\in D^+u(t,\mathbf{y}(t))$.
Hence, by Proposition \ref{A.2}, there exists $C_3(x,T)>0$
such that \begin{align*}
	&\,\frac{C_2(x,T)}{t-s}|\mathbf{y}(t)-\mathbf{y}(s) |^2 \\
	\leqslant &\, \langle p_t-p, \mathbf{y}(t)-\mathbf{y}(s) \rangle =\langle p_t-p_s, \mathbf{y}(t)-\mathbf{y}(s) \rangle +\langle p_s-p, \mathbf{y}(t)-\mathbf{y}(s) \rangle \\
	\leqslant &\,\frac{C_3(x,T)}{t-s} \cdot |\mathbf{y}(t)-\mathbf{y}(s) |\cdot |t-s| + \langle p_t-p_s, \mathbf{y}(t)-\mathbf{y}(s) \rangle \\
	&\, \quad\quad \quad\quad +  \langle q_t-q_s,  t-s \rangle -\langle q_t-q_s,  t-s \rangle \\
	\leqslant &\,\frac{C_3(x,T)}{t-s} \cdot |\mathbf{y}(t)-\mathbf{y}(s) |\cdot |t-s| + C_1(x,T) (|\mathbf{y}(t)-\mathbf{y}(s) |^2 + |t-s|^2  ) 
	\\
	&\, \quad  \quad \quad \quad   \quad  \quad \quad \quad 
	  -\langle q_t-q_s,  t-s \rangle 
\end{align*}
	where  $C_2(x,T)>0$ is a uniformly convexity constant in Proposition \ref{A.2} (3). 
	
Actually, by condition \rm{\textbf{(A)}}, $u(t,x)$ is differentiable with respect to $t$, and
 $$|q_t-q_s |=|D_tu(t,x)-D_su(s,x) |\leqslant  K(T) \big(|t-s|+|\mathbf{y}(t)-\mathbf{y}(s) | \big) . $$
Therefore,
\begin{align*}
	\frac{C_2(x,T)}{t-s}|\mathbf{y}(t)-\mathbf{y}(s) |^2 
	\leqslant &\, \frac{C_3(x,T)}{t-s} \cdot |\mathbf{y}(t)-\mathbf{y}(s) |\cdot |t-s| + C_1(x,T) (|\mathbf{y}(t)-\mathbf{y}(s) |^2 + |t-s|^2  ) \\
	& \, \quad \quad\quad\quad + K(T) \cdot |\mathbf{y}(t)-\mathbf{y}(s) |\cdot |t-s|+K(T)|t-s|^2.
\end{align*}
That is,
\begin{align*}
	\Big( \frac{C_2(x,T)}{t-s}-C_1(x,T) \Big) \Big|\frac{\mathbf{y}(t)-\mathbf{y}(s)}{t-s}\Big|^2 -\Big(\frac{C_3(x,T)}{t-s}  + K(T)\Big)\Big|\frac{\mathbf{y}(t)-\mathbf{y}(s)}{t-s}\Big| \leqslant C_1+K(T).
\end{align*}
Let $t-s$ be sufficiently small such that 
$$ t-s\leqslant\frac{C_2(x,T)}{2C_1(x,T)}, $$
then there exists a constant $C_4$ which only depends on $x,T$ such that
$$
\Big|\frac{\mathbf{y}(t)-\mathbf{y}(s)}{t-s}\Big| \leqslant C_4(x,T).
$$
More precisely, we can take $C_4(x,T):= \frac{ C_3(x,T)}{ C_2(x,T)}+  \frac{ K(T)+ \sqrt{C_1(x,T)+K(T)}}{2C_1(x,T) } $.
\end{proof}

\medskip

\section{Global propagation of singularities for discounted Hamiltonian}\label{Sect.3}

\subsection{Global propagation of singularities for discounted Hamiltonian}
For $\lambda>0$, we consider the Hamilton-Jacobi equation with discounted factor
\begin{equation}\label{eq:discount}\tag{HJ$_\lambda$}
	\lambda v(x)+H( x, Dv(x) )=0 , \quad x\in \R^n.
\end{equation}
where $H $ is a Tonelli Hamiltonian.


\begin{Lem}\label{equ1}\cite[Proposition 3.3]{Chen_Cheng_Zhang2018}
 $v(x) $ is a viscosity solution of \eqref{eq:discount} if and only if 
 $u(t,x)=e^{\lambda t}v(x)$ is a viscosity solution of the Hamilton-Jacobi equation
\begin{equation}\label{eq:dis}
	\begin{cases}
	D_t u+\widehat H (t,x,D_x u)=0,& (t,x)\in(0,+\infty)\times \R^n \\
	u(0,x)=v(x),& x\in \R^n ,
	\end{cases}
\end{equation}
where $\widehat H(t,x,p)=e^{\lambda t}H(x,e^{-\lambda t}p)$. Moreover, for any $(t,x)\in(0+\infty)\times\R^n$, we have  
\begin{equation}\label{eq:sing equ}
	x\in \mbox{\rm{Sing}}(v) \Leftrightarrow  (t,x)\in \mbox{\rm{Sing}}(u), \quad x\in \mbox{\rm{Cut}}(v) \Leftrightarrow  (t,x)\in \mbox{\rm{Cut}}(u). 
\end{equation}
\end{Lem}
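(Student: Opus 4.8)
The plan is to reduce everything to the explicit relation $u(t,x)=e^{\lambda t}v(x)$, handling the PDE equivalence by a test-function argument and then reading off the correspondence of singular and cut points. First I would show that $u$ is a viscosity subsolution of \eqref{eq:dis} whenever $v$ is one of \eqref{eq:discount}. Let $\phi\in C^1$ touch $u=e^{\lambda t}v$ from above at $(t_0,x_0)$ with $t_0>0$. Freezing $x=x_0$, the function $t\mapsto\phi(t,x_0)-e^{\lambda t}v(x_0)$ has an interior minimum at $t_0$, so $D_t\phi(t_0,x_0)=\lambda e^{\lambda t_0}v(x_0)$; and $\psi(x):=e^{-\lambda t_0}\phi(t_0,x)$ touches $v$ from above at $x_0$, so $\lambda v(x_0)+H(x_0,e^{-\lambda t_0}D_x\phi(t_0,x_0))\leqslant 0$. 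Multiplying by $e^{\lambda t_0}$ and recalling $\widehat H(t_0,x_0,p)=e^{\lambda t_0}H(x_0,e^{-\lambda t_0}p)$ yields
\[
D_t\phi(t_0,x_0)+\widehat H\bigl(t_0,x_0,D_x\phi(t_0,x_0)\bigr)\leqslant 0 .
\]
The supersolution inequality is obtained by the same computation with "touching from above" replaced by "touching from below" and the inequalities reversed, and the implication from $u$ to $v$ follows by testing $u$ at an arbitrary $t_0>0$ with $\phi(t,x)=e^{\lambda t}\psi(x)$. Since $u(0,\cdot)=v$ trivially, $v\mapsto u$ and $u\mapsto v$ are mutually inverse bijections between the viscosity solutions of the two equations.

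Next I would record the Lagrangian counterpart of the change of variables: computing the Legendre transform of $\widehat H(t,x,p)=e^{\lambda t}H(x,e^{-\lambda t}p)$ in $p$ and substituting $q=e^{-\lambda t}p$ shows that the Lagrangian of $\widehat H$ is $\widehat L(t,x,\cdot)=e^{\lambda t}L(x,\cdot)$, where $L$ is the Lagrangian of $H$. Hence, for every absolutely continuous $\gamma:[a,b]\to\R^n$,
\[
u(b,\gamma(b))-u(a,\gamma(a))-\int_a^b\widehat L(t,\gamma(t),\dot\gamma(t))\,dt
= e^{\lambda b}v(\gamma(b))-e^{\lambda a}v(\gamma(a))-\int_a^b e^{\lambda t}L(\gamma(t),\dot\gamma(t))\,dt ,
\]
so $\gamma$ is a calibrated curve for \eqref{eq:dis} on $[a,b]$ if and only if it is a $\lambda$-calibrated curve for \eqref{eq:discount} on $[a,b]$. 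I would also note that the $\lambda$-calibration condition is invariant under the time shift $\gamma(\cdot)\mapsto\gamma(\cdot-c)$, since both sides of its defining identity then pick up the common factor $e^{\lambda c}$; this allows the right endpoint of a backward $\lambda$-calibrated curve to be placed at any prescribed time.

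The correspondence of singular and cut points now follows. Since $e^{\lambda t}$ is smooth and positive, $u$ is jointly differentiable at $(t_0,x_0)$ with $t_0>0$ if and only if $v$ is differentiable at $x_0$ — the product rule gives one direction, and freezing $t=t_0$ in $v(x)=e^{-\lambda t_0}u(t_0,x)$ gives the other — whence $x\in\mathrm{Sing}(v)\Leftrightarrow(t,x)\in\mathrm{Sing}(u)$ for each $t>0$. For the cut points, if $(t,x)\notin\mathrm{Cut}(u)$ there is a backward calibrated curve of \eqref{eq:dis} ending at $(t,x)$ that extends past $t$; by the identity above it is a backward $\lambda$-calibrated curve of \eqref{eq:discount} ending at $x$ that still extends, so $x\notin\mathrm{Cut}(v)$. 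Conversely, a backward $\lambda$-calibrated curve of \eqref{eq:discount} ending at $x$ and extendable beyond $x$ may, after a time translation, be taken to end at time $t$; it is then a backward calibrated curve of \eqref{eq:dis} ending at $(t,x)$ that still extends, so $(t,x)\notin\mathrm{Cut}(u)$. This gives $x\in\mathrm{Cut}(v)\Leftrightarrow(t,x)\in\mathrm{Cut}(u)$.

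The step I expect to require the most care is not analytic but bookkeeping: aligning the definitions of a calibrated curve "ending at" a point and being "extended beyond" it for the two equations, and using time-translation invariance to match the endpoints. The PDE equivalence and the differentiability statement are then routine changes of variables.
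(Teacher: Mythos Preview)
Your proposal is correct and follows the natural route: the test-function computation for the viscosity equivalence, the Legendre-dual identity $\widehat L(t,x,\cdot)=e^{\lambda t}L(x,\cdot)$, and the resulting bijection between ($\lambda$-)calibrated curves, from which the $\mathrm{Sing}$ and $\mathrm{Cut}$ correspondences drop out. The paper does not give its own proof of this lemma; it cites \cite{Chen_Cheng_Zhang2018} for the PDE equivalence and records only the calibrated-curve observation for the $\mathrm{Cut}$ part in a brief remark, so your argument is essentially the same approach with the omitted details written out.
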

\begin{Rem}
	Since $\gamma:[a,b]\to\R^n$ is a calibrated curve   of equation \eqref{eq:discount} with Hamiltonian $H$ if and only if $\gamma:[a,b]\to\R^n$ is a calibrated curve   of  equation \eqref{eq:dis} with Hamiltonian $\widehat H$, then by the definition of $\mbox{\rm{Cut}}(u) $ in Definition \ref{defn:Aubry cut}, $ x\in \mbox{\rm{Cut}}(v) $ and $ (t,x)\in \mbox{\rm{Cut}}(u)$ are equivalent.
\end{Rem}

\begin{The}\label{propagation of singularitie global1}
	Let $H$ be a Tonelli Hamiltonian and $\lambda>0$. Suppose $v:\R^n\to \R$ is the Lipschitz continuous viscosity solution of \eqref{eq:discount}. Then for any fixed $x\in \mbox{\rm{Cut}}(v)$, there exists a locally Lipschitz curve $\mathbf{x}:[0,+\infty)\rightarrow \R^n $ with $\mathbf{x}(0)=x $, such that $ \mathbf{x}(\tau ) \in \mbox{\rm{Sing}}(v) $ for all $\tau \in [0,+\infty) $.
\end{The}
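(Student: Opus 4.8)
The plan is to transfer the statement to the evolutionary setting of Section~\ref{Sect.2} via the conformal change of Lemma~\ref{equ1}. Put $u(t,x)=e^{\lambda t}v(x)$; then $u$ is the viscosity solution of \eqref{eq:dis} with time-dependent Hamiltonian $\widehat H(t,x,p)=e^{\lambda t}H(x,e^{-\lambda t}p)$ and initial datum $u_0=v$, and by Lemma~\ref{equ1} one has $x\in\mbox{\rm{Sing}}(v)\Leftrightarrow(t,x)\in\mbox{\rm{Sing}}(u)$ and $x\in\mbox{\rm{Cut}}(v)\Leftrightarrow(t,x)\in\mbox{\rm{Cut}}(u)$ for every $t>0$. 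The first step is to check that the Lagrangian associated to $\widehat H$, namely $\widehat L(t,x,v)=e^{\lambda t}L(x,v)$ with $L$ the Lagrangian of $H$, satisfies \mbox{\rm{(L1)-(L3)}} whenever $L$ satisfies \mbox{\rm{(L1')-(L2')}}: (L1) is clear since $\widehat L_{vv}=e^{\lambda t}L_{vv}>0$; (L2) holds on each $[0,T]$ because $e^{\lambda t}\in[1,e^{\lambda T}]$ there, so the two-sided superlinear bounds of (L2') survive with, e.g., $c_T=e^{\lambda T}c_1$ (after, if needed, passing to monotone superlinear envelopes of $\theta_1,\theta_2$); and (L3) holds because $\widehat L_t=\lambda\widehat L$ while $\widehat L\geqslant-e^{\lambda T}c_1$ on $[0,T]$, whence $|\widehat L_t|=\lambda|\widehat L|\leqslant\lambda\widehat L+2\lambda e^{\lambda T}c_1$, i.e. one may take $\widetilde C_2\equiv\lambda$ and $\widetilde C_1(T)=2\lambda e^{\lambda T}c_1$, both locally bounded. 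I would also record that \mbox{\rm{(L1')-(L2')}} force $H(x,0)$ to be bounded, hence $v$ bounded, hence $u=e^{\lambda t}v$ globally Lipschitz on $[0,T]\times\R^n$, so that the a priori estimates of Section~\ref{Sect.2} apply verbatim.

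Granting this, fix any $t_0>0$. Since $x\in\mbox{\rm{Cut}}(v)$ we get $(t_0,x)\in\mbox{\rm{Cut}}(u)$, so Theorem~\ref{propagation of singularitie global} yields a generalized characteristic $\mathbf{y}:[t_0,+\infty)\to\R^n$ of \eqref{eq:dis} with $\mathbf{y}(t_0)=x$ and $(s,\mathbf{y}(s))\in\mbox{\rm{Sing}}(u)$ for all $s\geqslant t_0$. Shifting time, define $\mathbf{x}(\tau):=\mathbf{y}(\tau+t_0)$: then $\mathbf{x}(0)=x$ and, by the singularity correspondence, $\mathbf{x}(\tau)\in\mbox{\rm{Sing}}(v)$ for all $\tau\geqslant0$; since $\widehat H_p(t,x,q)=H_p(x,e^{-\lambda t}q)$ and the $x$-component of $D^{+}u(t,x)$ is $e^{\lambda t}$ times that of $D^{+}v(x)$, the exponential factors cancel and $\mathbf{x}$ is in fact a generalized characteristic of \eqref{eq:discount}. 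For the local Lipschitz property I would verify condition~\textbf{(A)} for $u$: since $u(t,x)=e^{\lambda t}v(x)$, the partial derivative $D_tu(t,x)=\lambda e^{\lambda t}v(x)$ exists at every point (so the $t$-component of every element of $D^{+}u(t,x)$ equals $D_tu(t,x)$), and $|D_tu(t,x)-D_tu(s,y)|\leqslant\lambda\big(|e^{\lambda t}-e^{\lambda s}|\,\|v\|_{\infty}+e^{\lambda s}\,\mbox{\rm{Lip}}[v]\,|x-y|\big)\leqslant K(T)(|t-s|+|x-y|)$. Hence Lemma~\ref{Lipschitz lem0} applies to each of the countably many arcs $\mathbf{y}_{t_1,t_1+t_{x,T},x_1}$ out of which $\mathbf{x}$ is assembled in the proof of Theorem~\ref{propagation of singularitie global}; the resulting Lipschitz constants stay bounded on bounded time intervals (the semiconcavity constant, the uniform-convexity constant of Proposition~\ref{A.2}, and $K(T)$ are all locally bounded, and a bounded interval meets only finitely many arcs), so $\mathbf{x}$ is locally Lipschitz on $[0,+\infty)$.

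I expect the genuinely delicate point to be this regularity step, not the reduction. Condition~\textbf{(A)} as literally written asks for $u$ to be differentiable on all of $[0,T]\times\R^n$, which is impossible once $\mbox{\rm{Sing}}(u)\neq\emptyset$; what is actually used in Lemma~\ref{Lipschitz lem0} is only differentiability in $t$ together with the Lipschitz bound on $D_tu$, and it is precisely the product structure $u=e^{\lambda t}v$ that supplies this — so the argument must be phrased accordingly. One must also confirm that the whole inductive construction of Theorem~\ref{propagation of singularitie global}, run with $\widehat H$ in place of $H$, keeps the arc inside the compact exhaustion $\{\Omega_n\}$ and that the constants $F_0$, $\lambda_2$ and the semiconcavity/convexity constants behave exactly as in the general time-dependent case of Section~\ref{Sect.2}; this bookkeeping is where the remaining work lies, everything else being a direct transcription.
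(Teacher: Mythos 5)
Your proposal is correct and follows essentially the same route as the paper: reduce to the evolutionary equation via $u(t,x)=e^{\lambda t}v(x)$, verify that $\widehat L(t,x,v)=e^{\lambda t}L(x,v)$ satisfies (L1)--(L3), invoke Theorem \ref{propagation of singularitie global} from a shifted initial time, and obtain local Lipschitz regularity by checking condition \textbf{(A)} through the identity $D_tu=\lambda u$ together with the Lipschitz bound on $u$ (Lemma \ref{lem:estimation2}). Your remark that condition \textbf{(A)} as literally stated is too strong, and that only differentiability in $t$ plus the Lipschitz bound on $D_tu$ is actually used in Lemma \ref{Lipschitz lem0}, is accurate and matches exactly how the paper itself applies it.
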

\begin{proof}
	 By using the variable transformation $s=\tau+1\in [1,+\infty)$, we only need to find a locally Lipschitz curve $\mathbf{x}:[1,+\infty)\rightarrow \R^n $ with $\mathbf{x}(1)=x $, such that $ \mathbf{x}(s) \in \mbox{\rm{Sing}}(v) $ for all $s\in [1,+\infty) $. Due to \eqref{eq:sing equ}, $x\in\mbox{\rm{Cut}}(v)$ implies $(1,x)\in\mbox{\rm{Cut}}(u)$. It is easy to check that $\widehat L(t,x,v)=e^{\lambda t}L(x,v) $ satisfies \mbox{\rm(L1)-(L3)}. Therefore, by Theorem \ref{propagation of singularitie global}, there exists a  curve $\mathbf{x}:[1,+\infty) \rightarrow  \R^n $ with $\mathbf{x}(1)=x $, such that $ (s, \mathbf{x}(s)) \in \mbox{\rm{Sing}}(u) $ for all $s \in (1,+\infty) $, that is, $  \mathbf{x}(\tau) \in \mbox{\rm{Sing}}(v) $ for all $\tau \in (0,+\infty) $.
	
	It remains to show that the curve $\mathbf{x}(\tau):[0,+\infty) \rightarrow \R^n $ is locally Lipschitz. Notice that $u(t,x)=e^{\lambda t}v(x)$ is differentiable with respect to $t$ and
\begin{align*}
	D_t u(t,x)= \lambda e^{\lambda t} v(x)= \lambda u(t,x),\qquad (t,x)\in(0,+\infty)\times\R^n.
\end{align*}
For any $(t,x)\in(0,+\infty)\times\R^n$ and $(s,y)\in(0,+\infty)\times\R^n$, by Lemma \ref{lem:estimation2}, we have
\begin{align*}
    |D_t u(t,x)-D_s u(s,y)|=|\lambda u(t,x)-\lambda u(s,y)|\leqslant F_0(T)(|t-s|+|x-y|),
\end{align*}
which implies condition \textbf{(A)} holds. Hence $\mathbf{x}:[0,+\infty) \rightarrow  \R^n $ is locally Lipschitz by Lemma \ref{Lipschitz lem0}.
\end{proof}


\subsection{Homotopy equivalence}
Now, suppose $u:\R^n\to\R$ is the Lipschitz viscosity solution of
\begin{equation}\label{eq:HJs}\tag{HJ$_\lambda$}
	\lambda u(x)+H(x,du(x))=0,\qquad x\in\R^n,
\end{equation}
where $H$ is a Tonelli Hamiltonian and $\lambda>0$. 

\begin{defn}\label{defn:Aubry cut}
	 (Aubry set): We define $\mathcal{I}(u)${\footnote{If $M$ is compact, $\mathcal{I}(u)$ is not empty and can be characterized by conjugate pairs for contact Hamiltonian systems with increasing condition in \cite{Wang_Wang_Yan2019_2}. For noncompact case, the question is still open if $\mathcal{I}(u)\neq \emptyset$ . }}, the Aubry set of $u$, as
	\begin{align*}
		\mathcal{I}(u)=\{x\in\R^n:\text{ there exists a calibrated curve } \gamma:(-\infty,+\infty)\to \R^n \text{ with } \gamma(0)=x\}
	\end{align*}
\end{defn}

In general we have the following inclusions:
$$
\mbox{\rm{Sing}}\,(u) \subset \mbox{\rm{Cut}}\,(u) \subset \R^n\setminus\mathcal{I}(u), \quad \mbox{\rm{Sing}}\,(u) \subset \mbox{\rm{Cut}}\,(u) \subset \overline{\mbox{\rm{Sing}}\,(u)} .
$$

\begin{The}\label{thm:homotopy equivalence}
The inclusions 
		$$
		\mbox{\rm{Sing}}\,(u) \subset \mbox{\rm{Cut}}\,(u) \subset \Big( \R^n\setminus\mathcal{I}(u) \Big) \cap  \overline{\mbox{\rm{Sing}}\,(u)} \subset \R^n\setminus\mathcal{I}(u)
		$$
		are all homotopy equivalences. 
\end{The}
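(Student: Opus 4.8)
The plan is to transfer the whole question to the evolutionary setting and then build, once and for all, a homotopy on $\R^n\setminus\mathcal{I}(u)$ that deformation--retracts it onto $\mbox{\rm{Sing}}(u)$ while keeping each of the two intermediate sets invariant; the four homotopy equivalences then drop out formally. To begin, I would apply Lemma \ref{equ1} and pass to $U(t,x):=e^{\lambda t}u(x)$, a viscosity solution of \eqref{eq:dis} whose associated Lagrangian $\widehat L(t,x,v)=e^{\lambda t}L(x,v)$ satisfies \mbox{\rm{(L1)-(L3)}} (as already noted in the proof of Theorem \ref{propagation of singularitie global1}). Since $U$ in fact solves its equation on all of $\R\times\R^n$, the correspondence \eqref{eq:sing equ} together with the remark after Lemma \ref{equ1} shows that $\mbox{\rm{Sing}}(U)$, $\mbox{\rm{Cut}}(U)$ and $\mathcal{I}(U)$ (in the sense of Definition \ref{defn:Aubry cut}) are exactly the cylinders $(0,+\infty)\times\mbox{\rm{Sing}}(u)$, $(0,+\infty)\times\mbox{\rm{Cut}}(u)$ and $(0,+\infty)\times\mathcal{I}(u)$; hence it suffices to carry out the construction in the $x$--variable alone, using for $U$ the whole apparatus of Section \ref{Sect.2}.

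The core is the construction of a flow $\Phi:[0,+\infty)\times(\R^n\setminus\mathcal{I}(u))\to\R^n\setminus\mathcal{I}(u)$. For a point of $\mbox{\rm{Sing}}(u)$ one takes the forward singular characteristic supplied by Theorem \ref{propagation of singularitie global1}, which is defined on all of $[0,+\infty)$ and stays in $\mbox{\rm{Sing}}(u)$. For a point $x\notin\mbox{\rm{Cut}}(u)$ — necessarily a differentiability point of $u$, carrying a unique backward $\lambda$--calibrated curve — one runs $x$ forward along that curve for as long as it remains calibrated; the essential point is that this forward extension must terminate, at a \emph{finite} time $\tau(x)$, at a point of $\mbox{\rm{Cut}}(u)$, for otherwise the curve would be $\lambda$--calibrated on all of $\R$ and would force $x\in\mathcal{I}(u)$, and from that cut point one then continues with its forward singular characteristic, which enters $\mbox{\rm{Sing}}(u)$ at once. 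Concatenating, and putting $\tau\equiv0$ on $\mbox{\rm{Cut}}(u)$, yields $\Phi_0=\mathrm{id}$ and $\Phi_s(x)\in\mbox{\rm{Sing}}(u)$ for every $s>\tau(x)$. Because $\Phi$ is assembled from the maximizer curves of Proposition \ref{propagation of singularitie}, which carry uniform--in--$x$ bounds via Lemma \ref{T - T + est1}, Lemma \ref{lem:estimation2} and Corollary \ref{T - T + est3} — precisely the point where the Fathi--Maderna type hypotheses \mbox{\rm{(L1')-(L2')}}, rather than the uniform conditions of \cite{Cannarsa_Cheng3}, are enough — one can patch the short, base--point--dependent time intervals of Proposition \ref{propagation of singularitie} over an exhausting sequence of balls, exactly as in the proof of Theorem \ref{propagation of singularitie global}, to obtain that $\Phi$ is jointly continuous; this is the content of Lemma \ref{lem:homotopy on [0,t]} (Appendix \ref{Appendix C}). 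Being continuous and mapping $\mbox{\rm{Sing}}(u)$ into itself, $\Phi_s$ also maps $\overline{\mbox{\rm{Sing}}(u)}$ into itself, and one checks directly that it preserves $\mbox{\rm{Cut}}(u)$ and $\R^n\setminus\mathcal{I}(u)$ as well, so that all three sets $\mbox{\rm{Sing}}(u)$, $\mbox{\rm{Cut}}(u)$ and $\big(\R^n\setminus\mathcal{I}(u)\big)\cap\overline{\mbox{\rm{Sing}}(u)}$ are $\Phi$--invariant.

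To read off the topology I would choose, using the finiteness of $\tau$ and its vanishing and good behaviour near $\mbox{\rm{Cut}}(u)$, a continuous majorant $\rho:\R^n\setminus\mathcal{I}(u)\to[0,+\infty)$ with $\rho>\tau$ off $\mbox{\rm{Sing}}(u)$ and $\rho\equiv0$ on $\mbox{\rm{Sing}}(u)$, and set
\[
G(\sigma,x):=\Phi_{\sigma\rho(x)}(x),\qquad (\sigma,x)\in[0,1]\times\big(\R^n\setminus\mathcal{I}(u)\big).
\]
Then $G_0=\mathrm{id}$, the map $G_1$ sends $\R^n\setminus\mathcal{I}(u)$ into $\mbox{\rm{Sing}}(u)$, $G$ fixes $\mbox{\rm{Sing}}(u)$ pointwise, and $G$ keeps $\mbox{\rm{Cut}}(u)$ and $\big(\R^n\setminus\mathcal{I}(u)\big)\cap\overline{\mbox{\rm{Sing}}(u)}$ invariant. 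Consequently each of $\mbox{\rm{Sing}}(u)$, $\mbox{\rm{Cut}}(u)$ and $\big(\R^n\setminus\mathcal{I}(u)\big)\cap\overline{\mbox{\rm{Sing}}(u)}$ is a deformation retract of $\R^n\setminus\mathcal{I}(u)$, whence the three inclusions into $\R^n\setminus\mathcal{I}(u)$ are homotopy equivalences, and the remaining two inclusions $\mbox{\rm{Sing}}(u)\hookrightarrow\mbox{\rm{Cut}}(u)$ and $\mbox{\rm{Cut}}(u)\hookrightarrow\big(\R^n\setminus\mathcal{I}(u)\big)\cap\overline{\mbox{\rm{Sing}}(u)}$ follow from the two--out--of--three property, exactly as in \cite{Cannarsa_Cheng_Fathi2017}, \cite{CCMW2019} and \cite{Cannarsa_Cheng_Fathi2019}.

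I expect the main obstacle to be analytic rather than topological: establishing the joint continuity of the globalized flow $\Phi$, where uniqueness of the maximizer in Proposition \ref{propagation of singularitie} is only granted on a short interval depending on the base point, and — above all — controlling the arrival time $\tau$, i.e. showing it is finite precisely on $\R^n\setminus\mathcal{I}(u)$, vanishes on $\mbox{\rm{Cut}}(u)$, and degenerates near $\mbox{\rm{Cut}}(u)$ mildly enough for a continuous majorant vanishing on $\mbox{\rm{Sing}}(u)$ to exist (in particular, one must rule out points of $\big(\R^n\setminus\mathcal{I}(u)\big)\cap\overline{\mbox{\rm{Sing}}(u)}$ that are not cut points and whose calibrated curve extends for a positive time). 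Both rest on the uniform a priori bounds of Lemma \ref{T - T + est1} and Lemma \ref{lem:estimation2} and on the quantitative strict concavity of $U(t,\cdot)-A_{t_1,t}(x,\cdot)$ furnished by Proposition \ref{pro:properties of u}\,(2) and Proposition \ref{A.2}\,(3); this is the work carried out in Appendix \ref{Appendix C}.
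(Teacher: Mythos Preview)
Your plan is essentially the paper's: the global homotopy $F$ of Lemma~\ref{lem:homotopy on [0,t]}, the forward cut time $\tau$ (shown finite on $\R^n\setminus\mathcal{I}(u)$ and upper semi-continuous in Lemma~\ref{lem:cut time function}), a continuous majorant of $\tau$, and the reparametrized homotopy $G(x,s)=F(x,s\,\alpha(x))$ whose time-one map is a common homotopy inverse to the three inclusions.

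There is, however, one genuine gap in your version of the majorant step. You ask for a continuous $\rho:\R^n\setminus\mathcal{I}(u)\to[0,+\infty)$ with $\rho\equiv 0$ on $\mbox{\rm Sing}(u)$ and $\rho>\tau$ off $\mbox{\rm Sing}(u)$, so that $G$ becomes a \emph{strong} deformation retraction. Such a $\rho$ cannot exist in general: continuity forces $\rho=0$ on $\overline{\mbox{\rm Sing}(u)}\cap(\R^n\setminus\mathcal{I}(u))\supset\mbox{\rm Cut}(u)$, so at any $x\in\mbox{\rm Cut}(u)\setminus\mbox{\rm Sing}(u)$ one has $\rho(x)=0=\tau(x)$, contradicting $\rho>\tau$, and then $G_1(x)=\Phi_0(x)=x\notin\mbox{\rm Sing}(u)$. (The same obstruction arises at points of $\overline{\mbox{\rm Sing}(u)}\setminus\mbox{\rm Cut}(u)$, where $\tau>0$; this is the worry you flag in your last paragraph, and it is indeed fatal to the vanishing requirement.) The paper sidesteps this entirely by taking $\alpha>\tau$ with $\alpha>0$ \emph{everywhere} on $\R^n\setminus\mathcal{I}(u)$, which is possible by a standard selection result for finite upper semi-continuous functions. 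One then loses pointwise fixing of $\mbox{\rm Sing}(u)$, but this is not needed: since $F(\mbox{\rm Cut}(u)\times(0,+\infty))\subset\mbox{\rm Sing}(u)$ (Lemma~\ref{lem:extended F}) and $\mbox{\rm Sing}(u)\subset\mbox{\rm Cut}(u)$, the homotopy $G$ already preserves $\mbox{\rm Sing}(u)$, $\mbox{\rm Cut}(u)$ and, by continuity of each $G_s$, also $\overline{\mbox{\rm Sing}(u)}\cap(\R^n\setminus\mathcal{I}(u))$. Hence $G_1$ is directly a homotopy inverse to each inclusion, and the two-out-of-three argument is unnecessary.

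A smaller point of exposition: your description of $\Phi$ as an explicit concatenation ``forward calibrated curve, then singular characteristic'' would require separate work to verify continuity at the switching time $\tau(x)$, which is only upper semi-continuous. The paper's $F(x,s)=\mathbf{x}_x(s)$ is instead built uniformly from the sup-convolution maximizer of Proposition~\ref{propagation of singularitie} for \emph{all} initial points (Appendix~\ref{Appendix C}); properties (b)--(c) of Lemma~\ref{lem:homotopy on [0,t]} then record that this single construction automatically follows the calibrated curve where one exists and enters $\mbox{\rm Sing}(u)$ once it ceases to, with no splice to justify. Since you ultimately defer to that lemma, this is a difference of narrative rather than substance.
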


This theorem obviously implies the following corollary (see, for instance, \cite{Dugundji_book})
\begin{Cor}
	For every connected component $C$ of $\R^n\setminus\mathcal{I}(u)$, these three intersections $\mbox{\rm{Sing}}\,(u)\cap C $, $\mbox{\rm{Cut}}\,(u) \cap C $ and $\overline{\mbox{\rm{Sing}}\,(u)} \cap C $ are path connected.
\end{Cor}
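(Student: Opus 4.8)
The plan is to exhibit a single continuous deformation of $\R^n\setminus\mathcal I(u)$ onto a subset of $\mbox{\rm{Sing}}(u)$ which, along the way, carries each of $\mbox{\rm{Sing}}(u)$, $\mbox{\rm{Cut}}(u)$ and $\big(\R^n\setminus\mathcal I(u)\big)\cap\overline{\mbox{\rm{Sing}}(u)}$ into itself. Granting such a deformation $H\colon[0,1]\times\big(\R^n\setminus\mathcal I(u)\big)\to\R^n\setminus\mathcal I(u)$ with $H_0=\mathrm{id}$ and $H_1$ valued in $\mbox{\rm{Sing}}(u)$, and writing $i$ for any one of the three inclusions into $\R^n\setminus\mathcal I(u)$ and $r=H_1$, the deformation is a homotopy $i\circ r\simeq\mathrm{id}$ and its restriction to the smaller space is a homotopy $r\circ i\simeq\mathrm{id}$, so each of $\mbox{\rm{Sing}}(u)\hookrightarrow\R^n\setminus\mathcal I(u)$, $\mbox{\rm{Cut}}(u)\hookrightarrow\R^n\setminus\mathcal I(u)$ and $\big(\R^n\setminus\mathcal I(u)\big)\cap\overline{\mbox{\rm{Sing}}(u)}\hookrightarrow\R^n\setminus\mathcal I(u)$ is a homotopy equivalence; a two-out-of-three argument along the chain $\mbox{\rm{Sing}}(u)\subset\mbox{\rm{Cut}}(u)\subset\big(\R^n\setminus\mathcal I(u)\big)\cap\overline{\mbox{\rm{Sing}}(u)}\subset\R^n\setminus\mathcal I(u)$ then upgrades this to every inclusion displayed. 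Throughout I work through the correspondence of Lemma \ref{equ1}, which lets me import the evolutionary machinery of Section \ref{Sect.2} and identifies $\mbox{\rm{Sing}}(u)$, $\mbox{\rm{Cut}}(u)$ with the corresponding objects for the evolutionary solution and $\R^n\setminus\mathcal I(u)$ with the set of points through which no two-sided $\lambda$-calibrated curve passes.

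To build the deformation I would first set up the \emph{singular characteristic semiflow} $\Phi$ on $\R^n\setminus\mathcal I(u)$. On $\mbox{\rm{Cut}}(u)$ it is the map $(\sigma,x)\mapsto\Phi_\sigma(x)$ obtained by concatenating the maximizer arcs $\mathbf y_{t_1,t,x}$ of Proposition \ref{propagation of singularitie} as in the proof of Theorem \ref{propagation of singularitie global1}; this gives $\Phi_\sigma(x)\in\mbox{\rm{Sing}}(u)$ for all $\sigma>0$, the semigroup identity, and — by uniqueness of these maximizers together with the a priori bounds of Lemma \ref{lem:estimation2} and Corollary \ref{T - T + est3} — joint continuity. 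For $x\in\big(\R^n\setminus\mathcal I(u)\big)\setminus\mbox{\rm{Cut}}(u)$ (so $u$ is differentiable at $x$, as $\mbox{\rm{Sing}}(u)\subset\mbox{\rm{Cut}}(u)$) I let $\Phi_\sigma(x)$ be the point at parameter $\sigma$ on the forward prolongation of the unique backward $\lambda$-calibrated curve through $x$; this prolongation remains $\lambda$-calibrated up to a first parameter $\mathbf t(x)$ at which it meets $\mbox{\rm{Cut}}(u)$, beyond which the two descriptions agree, so the pieces patch into a continuous $\Phi\colon[0,+\infty)\times\big(\R^n\setminus\mathcal I(u)\big)\to\R^n$. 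Two facts are then required: (i) $\Phi$ preserves $\R^n\setminus\mathcal I(u)$ — if $\Phi_\sigma(x)$ lay on a two-sided $\lambda$-calibrated curve $\eta$, then $\eta$ and the above prolongation share a backward germ at the differentiability point $\Phi_\sigma(x)$, hence coincide, so $\eta$ passes through $x$, contradicting $x\notin\mathcal I(u)$; and (ii) $\mathbf t(x)<+\infty$ for every $x\notin\mathcal I(u)$, since otherwise the forward prolongation stays $\lambda$-calibrated for all parameters and, concatenated with a backward $\lambda$-calibrated curve through $x$ and passed to a limit — here the discount $\lambda>0$ is essential, confining the relevant extremals to a fixed compact set — it produces a two-sided $\lambda$-calibrated curve through $x$.

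With $\Phi$ and $\mathbf t$ at hand, I would fix a continuous $\delta\colon\R^n\setminus\mathcal I(u)\to(0,+\infty)$ and set
\begin{equation*}
	H\colon[0,1]\times\big(\R^n\setminus\mathcal I(u)\big)\to\R^n\setminus\mathcal I(u),\qquad H(s,x)=\Phi_{\,s\,(\mathbf t(x)+\delta(x))}(x).
\end{equation*}
Then $H_0=\mathrm{id}$; $H_1(x)=\Phi_{\mathbf t(x)+\delta(x)}(x)\in\mbox{\rm{Sing}}(u)$ for every $x$, because $\Phi_{\mathbf t(x)}(x)\in\mbox{\rm{Cut}}(u)$ and flowing a further positive parameter lands in $\mbox{\rm{Sing}}(u)$; and for every $s$, $H_s$ carries $\mbox{\rm{Sing}}(u)$ into $\mbox{\rm{Sing}}(u)$, $\mbox{\rm{Cut}}(u)$ into $\mbox{\rm{Sing}}(u)\subset\mbox{\rm{Cut}}(u)$ for $s>0$ and into itself at $s=0$, and $\big(\R^n\setminus\mathcal I(u)\big)\cap\overline{\mbox{\rm{Sing}}(u)}$ into itself, using $\Phi_\sigma\big(\overline{\mbox{\rm{Sing}}(u)}\cap(\R^n\setminus\mathcal I(u))\big)\subset\overline{\Phi_\sigma(\mbox{\rm{Sing}}(u))}\subset\overline{\mbox{\rm{Sing}}(u)}$ by continuity of $\Phi_\sigma$ together with (i). This is exactly the deformation needed, so the homotopy equivalences follow as in the first paragraph, and transferring back through Lemma \ref{equ1} completes the proof; the continuity checks for $H$ I would model on Lemma \ref{lem:homotopy on [0,t]}.

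The main obstacle is the analytic input behind (ii) and, more seriously, the continuity of the hitting time $\mathbf t$ on $\R^n\setminus\mathcal I(u)$, which is precisely what makes $H$ continuous. Upper semicontinuity of $\mathbf t$ should be manageable, since a backward $\lambda$-calibrated curve through a differentiability point is isolated and the $\lambda$-calibration relation is closed under uniform limits. Lower semicontinuity is the delicate point: it amounts to converting the qualitative fact that $x$ lies on no two-sided $\lambda$-calibrated curve into control that is uniform for nearby points, and this is where the a priori estimates of Section \ref{Sect.2} and the discount structure must be combined carefully. I expect this to be the crux of the argument.
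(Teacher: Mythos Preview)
Your proposal is really a proof of Theorem \ref{thm:homotopy equivalence}, from which the Corollary follows by a standard point-set argument (a homotopy equivalence restricts to one on each connected component, and open connected subsets of $\R^n$ are path connected; this is what the paper means by citing Dugundji). So the comparison that matters is between your construction of the deformation and the paper's.

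The overall architecture---build the semiflow $F$ (your $\Phi$), then reparametrize by a function of $x$ so that time $1$ lands in $\mbox{\rm{Sing}}(u)$---is exactly the paper's. The paper's $F$ is precisely your $\Phi$ (Lemma \ref{lem:homotopy on [0,t]}), and the paper's $\tau$ is precisely your hitting time $\mathbf t$ (Lemma \ref{lem:cut time function}). The divergence is in how the reparametrization is chosen. You set $H(s,x)=\Phi_{s(\mathbf t(x)+\delta(x))}(x)$, which forces you to prove that $\mathbf t$ is \emph{continuous}; you correctly flag lower semicontinuity as the crux, and indeed you have not supplied it. The paper sidesteps this entirely: it proves only that $\tau$ is \emph{upper} semicontinuous (this is the easy direction, by closedness of calibration under $C^2$ limits of extremals), and then invokes the elementary fact that a finite upper semicontinuous function on a metric space admits a \emph{continuous} majorant $\alpha>\tau$ (Proposition 7.20 in \cite{Brown:1995aa}). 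With that $\alpha$, the paper sets $G(x,s)=F(x,s\alpha(x))$, which is continuous because $F$ and $\alpha$ are, and satisfies $G(x,1)\in\mbox{\rm{Sing}}(u)$ because $\alpha(x)>\tau(x)$.

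So the genuine gap in your plan is lower semicontinuity of $\mathbf t$, and the point is that you do not need it: replace $\mathbf t+\delta$ by any continuous $\alpha>\mathbf t$ and the argument goes through verbatim. Your item (ii) is then also unnecessary as stated---finiteness of $\tau$ on $\R^n\setminus\mathcal I(u)$ is immediate from the definition of $\mathcal I(u)$ (Lemma \ref{lem:cut time function}(ii)), without any compactness/limit argument.
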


\begin{The}\label{thm:locally contractible}\cite[Thm. 1.3]{Cannarsa_Cheng_Fathi2017}
	The spaces $\mbox{\rm{Sing}}\,(u)$ and  $\mbox{\rm{Cut}}\,(u)$ are locally contractible.
\end{The}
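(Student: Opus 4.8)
The statement to be proved is Theorem~\ref{thm:locally contractible}: the spaces $\mbox{\rm{Sing}}(u)$ and $\mbox{\rm{Cut}}(u)$ are locally contractible, where $u$ is the Lipschitz viscosity solution of the discounted equation \eqref{eq:HJs}. Since the result is quoted from \cite[Thm.~1.3]{Cannarsa\_Cheng\_Fathi2017}, the role of the proof here is to explain why the argument there applies verbatim in the present (discounted, noncompact) setting. The plan is to \emph{reduce everything to the local structure of $u$ near a point of $\mbox{\rm{Cut}}(u)$}, which is governed entirely by semiconcavity plus the global propagation of singularities along locally Lipschitz arcs (Theorem~\ref{propagation of singularitie global1}), both of which we already have in hand.

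\smallskip

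\emph{Step 1: Local model near a cut point.} Fix $x_0\in\mbox{\rm{Cut}}(u)$ and a small ball $B=B(x_0,r)$. By Lemma~\ref{equ1}, $u(t,x)=e^{\lambda t}v(x)$ is the solution of the evolutionary equation \eqref{eq:dis}, and $\mbox{\rm{Sing}}(v)\cap B$, $\mbox{\rm{Cut}}(v)\cap B$ are the fibers (at fixed $t$) of $\mbox{\rm{Sing}}(u)$, $\mbox{\rm{Cut}}(u)$; in particular $u$ is semiconcave on $B$ with some constant $C=C(x_0,r)$ by Proposition~\ref{pro:properties of u}(2). I would first record that, by the standard representation of superdifferentials of semiconcave functions, $\mbox{\rm{Sing}}(u)\cap B=\{x\in B: \dim D^+u(x)\geqslant 1\}$ and that $x\mapsto D^+u(x)$ is an upper semicontinuous, convex-compact-valued map. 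This is exactly the data on which the argument of \cite{Cannarsa\_Cheng\_Fathi2017} is built.

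\smallskip

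\emph{Step 2: The contracting homotopy.} The heart of the matter is to construct, for each $x_0\in\mbox{\rm{Cut}}(u)$ and each small enough neighborhood $U$ of $x_0$ in $\mbox{\rm{Cut}}(u)$ (resp.\ in $\mbox{\rm{Sing}}(u)$), a deformation of $U$ inside $\mbox{\rm{Cut}}(u)$ (resp.\ $\mbox{\rm{Sing}}(u)$) to the single point $x_0$. Here I would use the \emph{singular flow}: from Theorem~\ref{propagation of singularitie global1} every point of $\mbox{\rm{Cut}}(u)$ is the starting point of a locally Lipschitz singular arc, and by Proposition~\ref{propagation of singularitie} these arcs are obtained locally as the unique maximizers $y_{t_1,t,x_1}$ of $u(t,\cdot)-A_{t_1,t}(x_1,\cdot)$, which depend continuously (indeed locally Lipschitz-continuously, via the uniform convexity/semiconcavity estimates of Proposition~\ref{A.2} and Corollary~\ref{T - T + est3}) on the initial point. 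Concatenating the local pieces gives a map $\Phi:[0,1]\times U\to\mbox{\rm{Sing}}(u)$ with $\Phi(0,\cdot)=\mathrm{id}$ and $\Phi(s,\cdot)$ pushing each point a definite ``amount of singular time'' $s$ along its singular characteristic; choosing $U$ small enough that all these arcs stay in a fixed compact set and are defined up to a common time, and then composing with a straight-line homotopy in the (compact, convex-valued) superdifferential picture to collapse the resulting arc-image onto $x_0$, yields the contraction. The same construction with the $t$-fibers removed handles $\mbox{\rm{Cut}}(u)$.

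\smallskip

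\emph{Step 3: Transfer of the homotopy-equivalence results.} Finally, combining Step~2 with Theorem~\ref{thm:homotopy equivalence} (the inclusions $\mbox{\rm{Sing}}(u)\subset\mbox{\rm{Cut}}(u)\subset(\R^n\setminus\mathcal{I}(u))\cap\overline{\mbox{\rm{Sing}}(u)}\subset\R^n\setminus\mathcal{I}(u)$ are homotopy equivalences) shows that local contractibility of any one of these spaces is inherited by the others, since $\R^n\setminus\mathcal{I}(u)$ is an open subset of $\R^n$ and hence locally contractible in the trivial way; this is the point where the quoted corollary of \cite{Dugundji\_book} about components being path-connected also follows. I expect the \textbf{main obstacle} to be Step~2: one must check that the constants $t_{x,T}$, the ball radii $\lambda_2(T)T$, and the uniform convexity/semiconcavity constants can all be chosen uniformly for $x$ ranging over a small neighborhood of $x_0$ (rather than just for a single $x$ as in Proposition~\ref{propagation of singularitie}), so that the local singular-flow maps genuinely fit together into a continuous homotopy; the noncompactness of $\R^n$ is harmless here precisely because everything takes place inside a fixed compact ball $\overline{B(x_0,\lambda_2(T)T)}$, exactly as in the autonomous case treated in \cite{Cannarsa\_Cheng\_Fathi2017} and \cite{Cannarsa\_Cheng\_Fathi2019}.
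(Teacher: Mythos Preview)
Your Step~3 contains a genuine error: local contractibility is \emph{not} a homotopy invariant, so it cannot be ``inherited'' via the homotopy equivalences of Theorem~\ref{thm:homotopy equivalence}. A contractible space need not be locally contractible (the comb space is the standard example), so the fact that $\R^n\setminus\mathcal{I}(u)$ is open in $\R^n$, hence locally contractible, tells you nothing directly about $\mbox{\rm Sing}\,(u)$ or $\mbox{\rm Cut}\,(u)$. You therefore cannot use Step~3 to pass from one space to the other; each must be handled by the direct construction.

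Your Step~2 is on the right track but misses the key mechanism, and the reference to a ``straight-line homotopy in the superdifferential picture'' is a red herring. The homotopy $F$ of Lemma~\ref{lem:homotopy on [0,t]} does send $\mbox{\rm Cut}\,(u)\times(0,\infty)$ into $\mbox{\rm Sing}\,(u)$, but to contract a neighbourhood $V\cap\mbox{\rm Sing}\,(u)$ to a point \emph{inside} $\mbox{\rm Sing}\,(u)$ you need $F(\cdot,t_0)$ to push the \emph{entire} ambient ball $V$ (not just its singular part) into $\mbox{\rm Sing}\,(u)$, so that the convexity of $V$ can then be exploited. This is exactly where Lemma~\ref{lem:cut time function}(iii) enters: since $\tau$ is upper semicontinuous and $\tau(x_0)=0$ for $x_0\in\mbox{\rm Cut}\,(u)$, one may shrink $V$ so that $\tau<t_0$ on $V$; property~(b) of $F$ then forces $F(V,t_0)\subset\mbox{\rm Sing}\,(u)$. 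The contraction of $V\cap\mbox{\rm Sing}\,(u)$ inside $U\cap\mbox{\rm Sing}\,(u)$ is obtained by running $F(\cdot,s)$ for $s\in[0,t_0]$ and then $F\big((1-\sigma)x+\sigma x_0,\,t_0\big)$ for $\sigma\in[0,1]$, the straight line being taken in the convex ball $V\subset\R^n$. This is precisely the argument the paper imports from \cite{Cannarsa_Cheng_Fathi2017}; the paper's own work is to check that the global continuous homotopy $F$ with properties (a)--(c) exists in the present discounted, noncompact setting (Lemma~\ref{lem:homotopy on [0,t]}, proved in Appendix~\ref{Appendix C}). Your announced ``main obstacle'' about uniformity of $t_{x,T}$ is exactly the content of that appendix (Lemmas~\ref{lem:C.1} and~\ref{cor continuous}), but note that it yields continuity of $F$, not the contraction itself.
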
 

The proof of Theorem \ref{thm:homotopy equivalence} and \ref{thm:locally contractible} needs the following Lemma.

\begin{Lem}\label{lem:homotopy on [0,t]}
	There exists a continuous homotopy $F:\R^n \times [0,+\infty) \to \R^n $ with the following properties:
	\begin{enumerate}[\rm (a)]
		\item for all $x\in\R^n$, we have $F(x,0)=x$;
		\item if $F(x,s)\notin \mbox{\rm{Sing}}(u)$ for some $s>0$ and $x\in\R^n$, then the curve $\sigma\mapsto F(x,\sigma)$ is calibrated on $[0,s]$;
		\item if there exists a calibrated curve $\gamma:[0,s]\to \R^n $ with $\gamma(0)=x$, then $ \sigma \mapsto F(x,\sigma)=\gamma(\sigma) $, for every $\sigma\in[0,s]$.
	\end{enumerate}
\end{Lem}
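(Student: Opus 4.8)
The plan is to pass to the time-dependent picture and exhibit $F$ as the ``maximizer flow'' already produced in the proof of Theorem~\ref{propagation of singularitie global}. By Lemma~\ref{equ1} and \eqref{eq:sing equ} it is equivalent to work with $\bar u(t,x):=e^{\lambda t}u(x)$, a viscosity solution of \eqref{eq:dis} whose Lagrangian $\widehat L(t,x,v)=e^{\lambda t}L(x,v)$ satisfies \mbox{\rm{(L1)-(L3)}}; following the proof of Theorem~\ref{propagation of singularitie global1} I would move the time origin to $t=1$, which is harmless because calibration for \eqref{eq:discount} is translation invariant in time while Proposition~\ref{propagation of singularitie} needs a positive base time. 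For $1\leqslant t_0\leqslant t$ let $Y(t_0,t,x)$ be the maximizer of $y\mapsto\bar u(t,y)-A_{t_0,t}(x,y)$ when it is unique; by Proposition~\ref{propagation of singularitie} this is so, and $t\mapsto Y(t_0,t,x)$ is continuous, on an interval $[t_0,t_0+t_{x,T}]$, and one prolongs it to all $t\geqslant t_0$ exactly as in Step~II of the proof of Theorem~\ref{propagation of singularitie global}, via $Y(t_0,t,x):=Y\big(t_0+\sigma,t,Y(t_0,t_0+\sigma,x)\big)$. I then set $F(x,s):=Y(1,1+s,x)$, so property~(a) holds trivially.

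Properties~(b) and~(c) would follow from the dichotomy along $s\mapsto F(x,s)$. If $x\notin\mbox{\rm{Cut}}(u)$ then $u$ is differentiable at $x$ (since $\mbox{\rm{Sing}}(u)\subset\mbox{\rm{Cut}}(u)$), there is a unique forward calibrated curve issuing from $x$ -- the solution of the Hamiltonian system with datum $(x,du(x))$ -- and, using that $T^+_{1,t}\bar u(t,\cdot)(x)\leqslant\bar u(1,x)$ with equality exactly when a forward calibrated curve of the matching length exists, this curve coincides with $s\mapsto Y(1,1+s,x)$ on its maximal interval of calibration $[0,\tau(x))$; this gives~(c), which is otherwise vacuous since for $x\in\mbox{\rm{Cut}}(u)$ no forward calibrated curve from $x$ exists at all (gluing one to a backward calibrated curve ending at $x$, with which it must share the gradient -- or the reachable gradient selected via Proposition~\ref{prop:D^*} -- would extend the latter beyond $x$). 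For~(b): on $[0,\tau(x)]$ the value $T^+_{1,1+s}\bar u(1+s,\cdot)(x)$ equals $\bar u(1,x)$, so $F(x,\cdot)$ is there a concatenation of calibrated arcs, hence calibrated; while for $s>\tau(x)$ the point $F(x,s)$ is obtained by running the maximizer flow forward from the cut point $z_x=F(x,\tau(x))$, so $F(x,s)\in\mbox{\rm{Sing}}(u)$ by the last assertion of Proposition~\ref{propagation of singularitie} iterated as in Theorem~\ref{propagation of singularitie global}. Hence $F(x,s)\notin\mbox{\rm{Sing}}(u)$ forces $s\leqslant\tau(x)$, where $F(x,\cdot)$ is calibrated.

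The substantial point -- and the step I expect to be the main obstacle -- is the joint continuity of $F$ on $\R^n\times[0,+\infty)$. The building block is that, where it is single-valued, $(t,x)\mapsto Y(t_0,t,x)$ is continuous: in the relevant range of $t$, $\bar u(t,\cdot)-A_{t_0,t}(x,\cdot)$ is a continuous family of strictly concave functions (semiconcavity of $\bar u$ with space-continuous constant by Proposition~\ref{pro:properties of u}(2), against the uniform convexity of $A$ in Proposition~\ref{A.2}) whose maximizers all lie in one fixed compact ball by Lemma~\ref{T - T + est1} and Corollary~\ref{T - T + est3}, so a standard stability argument (Berge's maximum theorem) plus uniqueness yields continuity. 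One then runs the iteration defining $Y(t_0,\cdot,x)$ with step sizes and convexity/semiconcavity constants chosen locally uniformly in $x$ -- legitimate because these quantities are continuous in $x$ and because, on each bounded time interval, the flow stays in the compact sets $\Omega_n$ from the proof of Theorem~\ref{propagation of singularitie global} -- so that on such an interval $F$ is a \emph{finite} composition of continuous maps, hence continuous; continuity in $s$ is then immediate, each block being a continuous curve and consecutive blocks matching at their common endpoint. The delicate instant is $s=\tau(x)$, where the flow switches from following a classical calibrated curve to following a singular characteristic: continuity across it is precisely why Proposition~\ref{propagation of singularitie} is stated for a whole ball $B(x,\lambda_2(T)t_1)$ of initial points rather than for a single one, since it supplies the singular characteristic as the single-valued continuous maximizer flow on a neighbourhood of $z_x$.

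Finally, I would remark that for the discounted equation condition~\textbf{(A)} holds automatically (as in the proof of Theorem~\ref{propagation of singularitie global1}, because $D_t\bar u=\lambda\bar u$ is Lipschitz by Lemma~\ref{lem:estimation2}), so that by Lemma~\ref{Lipschitz lem0} each curve $s\mapsto F(x,s)$ is actually locally Lipschitz; only continuity, though, is needed for Lemma~\ref{lem:homotopy on [0,t]} and the proofs of Theorems~\ref{thm:homotopy equivalence} and~\ref{thm:locally contractible}.
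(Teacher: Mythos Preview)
Your proposal is correct and follows essentially the same route as the paper's Appendix~\ref{Appendix C}: one defines $F(x,s)=\mathbf{x}_x(s)$ as the maximizer flow of Theorem~\ref{propagation of singularitie global}, verifies (a)--(c) via the calibration/cut dichotomy (which the paper simply defers to \cite{Cannarsa_Cheng_Fathi2017}), and obtains joint continuity by combining continuity of the one-step map $z\mapsto y_{s,t,z}$ with an iteration whose step sizes are controlled locally uniformly in $x$. The only difference in execution is that the paper derives an explicit Lipschitz estimate for the one-step map (Lemma~\ref{lem:C.1}) rather than invoking a Berge-type stability argument, and then handles the $x$-dependence of $t_{x,T}$ via the auxiliary-curve comparison of Lemma~\ref{cor continuous}; your softer argument and the paper's quantitative one are interchangeable here.
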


The proof of Lemma \ref{lem:homotopy on [0,t]} is in Appendix \ref{Appendix C}.
These properties imply:
\begin{Lem}\label{lem:extended F}
\begin{enumerate}[\rm (1)]
	\item $F(\mbox{\rm Cut}\,(u)\times(0,+\infty))\subset\mbox{\rm{Sing}}(u)$;
	\item if $F(x,s)\notin\mbox{\rm{Sing}}\,(u)$ for all $s\in[0,+\infty)$, then $x\in\mathcal{I}(u)$ and $s\mapsto F(x,s)$, $s\in[0,+\infty)$ is a forward calibrated curve with $F(x,0)=x$;
	\item if $x\notin \mathcal{I}(u)$, then $F(x,s)\notin\mathcal{I}(u)$ for every $s\in[0,+\infty)$.
\end{enumerate}
\end{Lem}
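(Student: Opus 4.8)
The plan is to derive each of the three conclusions directly from the three properties (a), (b), (c) of the homotopy $F$ provided by Lemma \ref{lem:homotopy on [0,t]}, together with the inclusions $\mbox{\rm{Sing}}(u)\subset\mbox{\rm{Cut}}(u)\subset\overline{\mbox{\rm{Sing}}(u)}$ and the definition of the Aubry set $\mathcal{I}(u)$.

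For (1), take $x\in\mbox{\rm{Cut}}(u)$ and $s>0$, and suppose toward a contradiction that $F(x,s)\notin\mbox{\rm{Sing}}(u)$. By property (b) the curve $\sigma\mapsto F(x,\sigma)$ is calibrated on $[0,s]$; in particular its restriction is a backward calibrated curve ending at $x=F(x,0)$, and it genuinely extends beyond $x$ (to the left, using the variable change $s\mapsto s+1$ as in Theorem \ref{propagation of singularitie global1}, $F$ is defined past the initial time). This contradicts $x\in\mbox{\rm{Cut}}(u)$, since no backward calibrated curve ending at a cut point can be extended beyond it. Hence $F(x,s)\in\mbox{\rm{Sing}}(u)$.

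For (2), suppose $F(x,s)\notin\mbox{\rm{Sing}}(u)$ for every $s\in[0,+\infty)$. Applying property (b) on each interval $[0,s]$ shows the curve $\sigma\mapsto F(x,\sigma)$ is calibrated on $[0,s]$ for all $s$, hence calibrated on all of $[0,+\infty)$, and $F(x,0)=x$ by (a); this gives a forward calibrated curve starting at $x$. To upgrade this to membership in $\mathcal{I}(u)$ we need a full bi-infinite calibrated curve through $x$: since $x\notin\mbox{\rm{Cut}}(u)$ (a cut point would contradict (1)), there is a backward calibrated curve $\gamma^-:(-\infty,0]\to\R^n$ with $\gamma^-(0)=x$ that can be extended; concatenating $\gamma^-$ with $\sigma\mapsto F(x,\sigma)$ produces a calibrated curve on $(-\infty,+\infty)$ passing through $x$ at time $0$, so $x\in\mathcal{I}(u)$.

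For (3), suppose $x\notin\mathcal{I}(u)$ but $F(x,s_0)\in\mathcal{I}(u)$ for some $s_0>0$. Then there is a calibrated curve $\zeta:(-\infty,+\infty)\to\R^n$ with $\zeta(0)=F(x,s_0)$. Because $F(x,s_0)$ lies on a bi-infinite calibrated curve, it is in particular not a cut point, so by (1) the point $x$ is not a cut point either; moreover, reading the construction of $F$ backward, the curve $\sigma\mapsto F(x,\sigma)$ must agree with a backward calibrated extension of $\zeta$, so the restriction of $\zeta$ to $(-\infty,0]$ shifted and concatenated with $\sigma\mapsto F(x,\sigma)|_{[0,s_0]}$ yields a calibrated curve defined on $(-\infty,+\infty)$ through $x$, forcing $x\in\mathcal{I}(u)$, a contradiction. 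The one delicate point — and the step I expect to require the most care — is the matching/extension argument: one must invoke property (c) (uniqueness of $F$ along calibrated curves) to guarantee that $\sigma\mapsto F(x,\sigma)$ coincides with the already-given calibrated curve through $F(x,s_0)$ on the overlap, so that the concatenations in (2) and (3) are genuinely $C^1$ calibrated curves rather than merely piecewise ones. Everything else is a routine combination of (a)–(c) with the standard inclusions.
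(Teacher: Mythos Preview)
Your overall approach matches the paper's, which states the lemma as a direct consequence of properties (a)--(c) and gives no further proof. The strategy is correct, but two of your arguments are garbled.

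In (1), calling $F(x,\cdot)|_{[0,s]}$ ``a backward calibrated curve ending at $x$'' is backwards: this curve \emph{starts} at $x$. The variable change $s\mapsto s+1$ from Theorem~\ref{propagation of singularitie global1} is a red herring; it does not extend $F$ to negative times. The clean argument is simply that the existence of a calibrated curve on $[0,s]$ with value $x$ at $0$ is precisely a forward extension past $x$ of any backward calibrated curve ending at $x$ (such curves always exist for the viscosity solution), contradicting $x\in\mbox{\rm Cut}(u)$.

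In (3) there is a genuine gap. The concatenation you describe---$\zeta|_{(-\infty,0]}$ shifted together with $F(x,\cdot)|_{[0,s_0]}$---neither covers all of $(-\infty,+\infty)$ nor passes through $x$ in the right way. The correct construction is: since $F(x,s_0)\in\mathcal{I}(u)\subset\R^n\setminus\mbox{\rm Sing}(u)$, property (b) makes $F(x,\cdot)|_{[0,s_0]}$ calibrated; now concatenate a backward calibrated curve $(-\infty,0]\to\R^n$ ending at $x$, then $F(x,\cdot)|_{[0,s_0]}$, then the \emph{forward} branch $t\mapsto\zeta(t-s_0)$ for $t\geqslant s_0$. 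By additivity of the calibration identity \eqref{eq:calibrated 1} the result is calibrated on $(-\infty,+\infty)$ and passes through $x$, forcing $x\in\mathcal{I}(u)$. Property (c) is not needed here or in (2): concatenations of calibrated curves meeting at a common point are automatically calibrated, and Tonelli regularity then makes the joined curve $C^2$, so no separate matching argument is required.
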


Now, for $x\in\R^n$, we define $\tau(x)$ to be the supremum of the $t\geqslant 0$ such that there exists a calibrated curve $\gamma:[0,t]\to\R^n$ with $\gamma(0)=x$.

\begin{Lem}\label{lem:cut time function}
\begin{enumerate}[\rm (i)]
	\item $\tau(x)=0$ if and only if $x\in\mbox{\rm Cut}\,(u)$;
	\item $\tau(x)=+\infty$ if and only if $x\in\mathcal{I}(u)$;
	\item the function $\tau$ is upper semi-continuous.
\end{enumerate}
\end{Lem}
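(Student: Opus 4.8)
The plan is to read off all three assertions from the homotopy $F$ of Lemma~\ref{lem:homotopy on [0,t]}, its corollary Lemma~\ref{lem:extended F}, the inclusions $\mbox{\rm{Sing}}\,(u)\subset\mbox{\rm{Cut}}\,(u)\subset\big(\R^n\setminus\mathcal{I}(u)\big)\cap\overline{\mbox{\rm{Sing}}\,(u)}$ recorded above, and the elementary fact that, for the discounted equation, the $\lambda$-calibration identity is invariant under time translation: if $\gamma:[a,b]\to\R^n$ is $\lambda$-calibrated and $c\in\R$, then $t\mapsto\gamma(t-c)$ is $\lambda$-calibrated on $[a+c,b+c]$, since both sides of the defining identity pick up the same factor $e^{\lambda c}$. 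Two consequences will be used throughout. First, $x\notin\mbox{\rm{Cut}}\,(u)$ precisely when there is a $\lambda$-calibrated $\zeta:[-\delta,\delta]\to\R^n$ with $\zeta(0)=x$ for some $\delta>0$; more generally, if $y=\gamma(\sigma)$ for some $\lambda$-calibrated $\gamma:[a,b]\to\R^n$ and some $\sigma\in(a,b)$, then $y\notin\mbox{\rm{Cut}}\,(u)$ (restrict $\gamma$ to the left of $\sigma$ and extend it to the right). Second, Lemma~\ref{lem:homotopy on [0,t]}\,(c) says that whenever a $\lambda$-calibrated $\gamma:[0,s]\to\R^n$ with $\gamma(0)=x$ exists it must coincide with $\sigma\mapsto F(x,\sigma)$ on $[0,s]$; in particular the forward $\lambda$-calibrated curve issuing from $x$ is unique.

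Parts (i) and (ii) are then short. For (i): if $x\in\mbox{\rm{Cut}}\,(u)$ and $\tau(x)>0$, pick a $\lambda$-calibrated $\gamma:[0,t]\to\R^n$ with $\gamma(0)=x$, $t>0$, and $\sigma\in(0,t)$; by Lemma~\ref{lem:homotopy on [0,t]}\,(c), $\gamma(\sigma)=F(x,\sigma)$, so $\gamma(\sigma)\in\mbox{\rm{Sing}}\,(u)\subset\mbox{\rm{Cut}}\,(u)$ by Lemma~\ref{lem:extended F}\,(1), which contradicts the fact that $\gamma(\sigma)$ sits at an interior parameter value of $\gamma$; hence $\tau(x)=0$. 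Conversely, $x\notin\mbox{\rm{Cut}}\,(u)$ yields a $\lambda$-calibrated $\zeta:[a,b]\to\R^n$ with $a<0<b$ and $\zeta(0)=x$, whose restriction to $[0,b]$ shows $\tau(x)\ge b>0$. For (ii): an entire $\lambda$-calibrated curve through $x$ restricts to every $[0,t]$, so $x\in\mathcal{I}(u)\Rightarrow\tau(x)=+\infty$; for the converse, if $\tau(x)=+\infty$ then for each $n$ there is a $\lambda$-calibrated $\gamma_n:[0,n]\to\R^n$ with $\gamma_n(0)=x$, and by Lemma~\ref{lem:homotopy on [0,t]}\,(c) every $\gamma_n$ agrees with $F(x,\cdot)$ on its domain. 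Hence for $s>0$ the point $F(x,s)$ sits at an interior parameter value of $\gamma_n$ as soon as $n>s$, so $F(x,s)\notin\mbox{\rm{Cut}}\,(u)\supset\mbox{\rm{Sing}}\,(u)$; and $F(x,0)=x\notin\mbox{\rm{Sing}}\,(u)$, because $\tau(x)>0$ forces $x\notin\mbox{\rm{Cut}}\,(u)$ by part (i). Thus $F(x,s)\notin\mbox{\rm{Sing}}\,(u)$ for all $s\in[0,+\infty)$, and Lemma~\ref{lem:extended F}\,(2) gives $x\in\mathcal{I}(u)$.

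For (iii) it suffices to show $\{x:\tau(x)\ge c\}$ is closed for each $c>0$. Let $x_k\to x_0$ with $\tau(x_k)\ge c$ and fix $c'<c$; for $k$ large each $x_k$ admits a $\lambda$-calibrated $\gamma_k:[0,c']\to\R^n$ with $\gamma_k(0)=x_k$, and by Lemma~\ref{lem:homotopy on [0,t]}\,(c), $\gamma_k=F(x_k,\cdot)$ on $[0,c']$. Since $F$ is continuous and $\{x_k\}_k\cup\{x_0\}$ is compact, $\gamma_k\to\gamma_0:=F(x_0,\cdot)$ uniformly on $[0,c']$. Being $\lambda$-calibrated, the $\gamma_k$ are Tonelli minimizers for $\widehat L(t,x,v)=e^{\lambda t}L(x,v)$ with endpoints in a fixed bounded set, hence equi-Lipschitz on $[0,c']$ by Corollary~\ref{T - T + est3} and the regularity of the fundamental solution in the appendix; passing to the limit in the $\lambda$-calibration identity along $\gamma_k$ and combining lower semicontinuity of the action with the domination inequality satisfied by the viscosity solution $u$ along every Lipschitz curve, one obtains that $\gamma_0$ is $\lambda$-calibrated on $[0,c']$ with $\gamma_0(0)=x_0$, so $\tau(x_0)\ge c'$; letting $c'\uparrow c$ gives $\tau(x_0)\ge c$. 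I expect the only genuine work to be in this last step — the stability of $\lambda$-calibration under uniform limits — which relies on the uniform Lipschitz estimates for minimizers and on $u$ being a dominated (viscosity) solution; everything else is bookkeeping with Lemmas~\ref{lem:homotopy on [0,t]} and \ref{lem:extended F} and the time-translation invariance of the $\lambda$-calibration identity.
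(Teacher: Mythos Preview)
Your proof is correct. For (i) and (ii) the paper simply says these follow directly from the definitions of $\mbox{\rm Cut}\,(u)$ and $\mathcal{I}(u)$; your route through Lemma~\ref{lem:homotopy on [0,t]} and Lemma~\ref{lem:extended F} reaches the same conclusion but is more roundabout, since the ``interior point of a calibrated curve is not a cut point'' observation you use already encodes the direct argument. The genuine divergence is in (iii). The paper argues on the ODE side: since $\tau(x_i)>0$ forces $x_i\notin\mbox{\rm Sing}\,(u)$, one may pass to a subsequence with $Du(x_i)\to p_0\in D^*u(x_0)$; each calibrated $\gamma_i$ is then the solution of the Hamiltonian system \eqref{eq:H} with initial data $(x_i,Du(x_i))$, and continuous dependence on initial conditions gives $\gamma_i\to\gamma_0$ in $C^2$, so the calibration identity passes to the limit by mere continuity of the integrand. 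Your proof is instead purely variational: you use the continuity of the homotopy $F$ (established in Appendix~\ref{Appendix C}) to get uniform convergence $\gamma_k\to\gamma_0$, and then close the gap between the limit of the actions and the action of the limit via Tonelli lower semicontinuity plus the domination inequality for the viscosity solution $u$. This avoids the Hamiltonian flow altogether at the cost of leaning on the rather technical continuity of $F$; the paper's ODE argument is self-contained once (i) is known. One small correction: Corollary~\ref{T - T + est3} controls the location of maximizers of $T^+$, not the speed of minimizing curves; the equi-Lipschitz bound you need is Proposition~\ref{A.1}.
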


\begin{proof}
(i) and (ii) follows directly from the definition of $\mbox{\rm Cut}\,(u)$ and $\mathcal{I}(u)$. It remains to prove (iii). Indeed, we only need to prove that for any $\tau'>0$ the set $\{x\in\R^n:\tau(x)\geqslant\tau'\}$ is closed. Take any sequence $x_i$ such that $\tau(x_i)\geqslant\tau'$ and $x_i\to x_0$, and let $\gamma_i:[0,\tau']\to\R^n$, $\gamma_i(0)=x_i$ be the associated calibrated curves. By taking a subsequence, we can assume that
\begin{align*}
    \lim_{i\to\infty}Du(x_i)=p_0\in D^{*}u(x_0).
\end{align*}
Notice that $\gamma_i:[0,\tau']\to\R^n$ is the solution of \eqref{eq:H} with initial condition $\gamma_i(0)=x_i$, $p_i(0)=Du(x_i)$. Let $\gamma_0:[0,\tau']\to\R^n$ be the solution of \eqref{eq:H} with initial condition $\gamma_0(0)=x_0$, $p_0(0)=p_0$. It follows that $\gamma_i$ converges to $\gamma_0$ in $C^2$ topology. Thus, we have
\begin{align*}
	e^{\lambda \tau'}u(\gamma_0(\tau'))&=\lim_{i\to\infty}e^{\lambda \tau'}u(\gamma_i(\tau'))=	\lim_{i\to\infty} u(\gamma_i(0)) +\int_{0}^{\tau'} e^{\lambda t}L(\gamma_i(t),\dot{\gamma}_i(t))\ dt\\
	&=u(\gamma_0(0)) +\int_{0}^{\tau'} e^{\lambda t}L(\gamma_0(t),\dot{\gamma}_0(t))\ dt.\\
\end{align*}
This implies $\gamma_0:[0,\tau']\to\R^n$ is a calibrated curve and $\tau(x_0)\geqslant \tau'$. Therefore, the set $\{x\in\R^n:\tau(x)\geqslant\tau'\}$ is closed and the function $\tau$ is upper semi-continuous.
\end{proof}



\begin{proof}[Proof of Theorem \ref{thm:homotopy equivalence}]
	By Lemma \ref{lem:cut time function}, the function $\tau$ is upper semi-continuous and finite on $\R^n\setminus\mathcal{I}(u)$. Thus, by Proposition 7.20 in \cite{Brown:1995aa}, we can find a continuous function $\alpha:\R^n\setminus\mathcal{I}(u)\to(0,+\infty)$ with $\alpha>\tau$ on $\R^n\setminus\mathcal{I}(u)$. We now define $G: (\R^n\setminus\mathcal{I}(u))\times[0,1]\to\R^n\setminus\mathcal{I}(u)$ by 
	$$
	G(x,s)=F(x,s \alpha (x) ).
	$$
	Due to Lemma \ref{lem:homotopy on [0,t]}, Lemma \ref{lem:extended F} and the continuity of $\alpha$, the map $G(x,s)$ is a homotopy of $\R^n\setminus\mathcal{I}(u)$ into itself, such that $G(\R^n\setminus\mathcal{I}(u),1)\subset\mbox{\rm Sing}\,(u)$ and $G(\mbox{\rm Cut}\,(u),(0,1])\subset\mbox{\rm Sing}\,(u)$. Therefore, the time one map of $G$ gives a homotopy inverse for each one of the inclusions
	\begin{align*}
		\mbox{\rm Sing}\,(u)\subset\mbox{\rm Cut}\,(u)\subset\overline{\mbox{\rm Sing}\,(u)}\cap(\R^n\setminus\mathcal{I}(u))\subset\R^n\setminus\mathcal{I}(u).
	\end{align*}
\end{proof}

\subsection{genuine propagation of singularities}
To study genuine propagation of singularities, we have to check that the singular arc $\mathbf{x}$ in Theorem \ref{propagation of singularitie global1} is not a fixed point. As we show below, the following condition about strong  critical point (see, for instance \cite{Cannarsa_Yu2009},\cite{Cannarsa_Cheng3}) can be useful for this purpose. 
\begin{defn}
	We say that $x\in \R^n$ is a strong critical point of a viscosity solution $v$ of \eqref{eq:discount} if 
$$
0 \in \lambda v(x)+H_p(x,D^+v(x)). 
$$
\end{defn}
\begin{Cor}
Let $\mathbf{x}:[0,+\infty ) \to \R^n$ be the singular curve in Theorem \ref{propagation of singularitie global1}. If $x$ is not a strong critical point of $v$, then there exists $t>0$ such that $\mathbf{x}(s)\neq x_0 $ for all $s\in (0,t]$.
\end{Cor}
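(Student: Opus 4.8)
The plan is to argue by contradiction, combining the construction of the singular arc $\mathbf{x}$ in Theorem \ref{propagation of singularitie global1} with the characterization of strong critical points. Suppose the conclusion fails: then there is a sequence $s_k \downarrow 0$ with $\mathbf{x}(s_k) = x$ for all $k$. Recall from the proof of Proposition \ref{propagation of singularitie} (transported to the discounted setting as in Theorem \ref{propagation of singularitie global1}) that, for small $s>0$, $\mathbf{x}(s) = Y(0,s,x)$ is the unique maximizer of $u(s,\cdot) - A_{0,s}(x,\cdot)$, where $u(t,x) = e^{\lambda t} v(x)$ and $A$ is the fundamental solution of the associated evolutionary equation \eqref{eq:dis}. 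Let $\xi_s \in \Gamma^{0,s}_{x,\mathbf{x}(s)}$ be the unique minimizer for $A_{0,s}(x,\mathbf{x}(s))$ and $p_s := L_v(s,\xi_s(s),\dot\xi_s(s))$ the associated dual arc endpoint; by Lemma \ref{lem:p endpoint} (and the maximality), $p_s \in D^+ u(s,\mathbf{x}(s))$, i.e. (using the discount scaling) $e^{-\lambda s} p_s \in D^+ v(\mathbf{x}(s))$.

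Next I would extract the limiting behavior as $s \to 0$. Since $\mathbf{x}(s_k) = x$, the minimizer $\xi_{s_k}$ is a loop at $x$ on $[0,s_k]$, so $|\dot\xi_{s_k}|$ is bounded on $[0,s_k]$ by the a priori estimates underlying Lemma \ref{T - T + est1}/Corollary \ref{T - T + est3} (the displacement is $O(s_k)$, hence the average speed is bounded, and Tonelli minimizers with bounded endpoints over short times have uniformly bounded velocity). Passing to a subsequence, $\dot\xi_{s_k}(s_k) \to v_0$ for some $v_0 \in \R^n$, so $p_{s_k} = L_v(s_k,\xi_{s_k}(s_k),\dot\xi_{s_k}(s_k)) \to p_0 := L_v(0,x,v_0)$, and since $D^+ v(x)$ is closed we get $p_0 \in D^+ v(x)$. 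Moreover $v_0 = H_p(0,x,p_0) = H_p(x,p_0)$ by the Legendre duality. The remaining point is to show $v_0 = 0$, which then gives $0 \in H_p(x, D^+v(x))$; combined with the equation $\lambda v(x) + H(x,p_0) = 0$ satisfied on $D^+v$ in the appropriate sense, this contradicts the hypothesis that $x$ is not a strong critical point, i.e. that $0 \notin \lambda v(x) + H_p(x, D^+ v(x))$ — modulo matching the precise form of the strong-critical-point condition, which involves $\lambda v(x) + H_p$ rather than $H_p$ alone; I would reconcile these by noting that on $D^+ v(x)$ the constraint $\lambda v(x) = -H(x,p_0)$ holds, so the strong-critical condition is equivalent to $0 \in H_p(x, D^+v(x))$ along those $p$ realizing equality, or alternatively work directly with the scaled Hamiltonian $\widehat H$ whose $p$-gradient carries the $e^{\lambda t}$ factor that disappears at leading order.

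To see $v_0 = 0$: the curve $\sigma \mapsto \mathbf{x}(\sigma)$ on $[0,s_k]$ is (the reverse of) a calibrated/extremal curve for $u$ ending at $\mathbf{x}(s_k)=x$ and, by the cut-point property, cannot be calibrated past $0$; but more usefully, consider the first-order expansion. Since $\mathbf{x}(s_k) = x = \mathbf{x}(0)$ for a sequence $s_k \to 0$ and $\mathbf{x}$ is locally Lipschitz (Theorem \ref{propagation of singularitie global1}), the Lipschitz curve $\mathbf{x}$ returns to $x$ along $s_k \to 0$; I would differentiate the identity $\mathbf{x}(s) = Y(0,s,x)$ and use the Euler–Lagrange flow structure to identify the right derivative $\dot{\mathbf{x}}(0^+)$ with $H_p(x,p_0) = v_0$, while the squeeze $\mathbf{x}(s_k) = x$ forces the difference quotients $(\mathbf{x}(s_k) - x)/s_k = 0$, hence $v_0 = \dot{\mathbf{x}}(0^+) = 0$. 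Here the main obstacle is precisely this last step: establishing that $\mathbf{x}$ has a well-defined one-sided derivative at $0$ equal to $H_p(x,p_0)$ and that $\mathbf{x}(s_k) = x$ along a sequence genuinely forces that derivative to vanish — the Lipschitz bound from Lemma \ref{Lipschitz lem0} gives a bounded difference quotient but not a priori convergence, so one must use the convexity/uniqueness machinery of Proposition \ref{propagation of singularitie} (strict concavity of $u(s,\cdot) - A_{0,s}(x,\cdot)$ and the $C^2$ dependence of minimizers $\xi_s$ on $s$, as in Appendix \ref{Appendix A}) to get genuine convergence of $(\mathbf{x}(s)-x)/s$ and pin down the limit as the dual-arc velocity $v_0$. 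Once $v_0 = 0$ is secured, the contradiction with the non-strong-critical hypothesis is immediate, which proves the existence of $t>0$ with $\mathbf{x}(s) \neq x$ for all $s \in (0,t]$.
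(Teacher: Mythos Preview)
The paper itself gives no proof of this corollary; it is stated immediately after the definition of strong critical point with an implicit appeal to \cite{Cannarsa_Yu2009} and \cite{Cannarsa_Cheng3}. In those references the argument goes through the existence of the right derivative of the singular arc at the initial time: one shows $\dot{\mathbf{x}}(0^+)=H_p(x,p_0)$ where $p_0=\arg\min_{p\in D^+v(x)}H(x,p)$, and then $0\notin H_p(x,D^+v(x))$ forces $\dot{\mathbf{x}}(0^+)\neq 0$, hence $\mathbf{x}(s)\neq x$ for small $s>0$.

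Your contradiction approach is sound and in fact lighter than the route above, but you take an unnecessary detour at the key step. You do \emph{not} need $\dot{\mathbf{x}}(0^+)$ to exist. Once $\mathbf{x}(s_k)=x$, the curve $\xi_{s_k}$ is the unique action minimizer from $x$ to $x$ on $[0,s_k]$, and by the smoothness of the Hamiltonian flow (which underlies the $C^{1,1}$ regularity in Proposition~\ref{A.5}) one gets $\dot\xi_{s_k}(s_k)\to 0$ directly: writing $\pi\circ\Phi^{s}(x,p)=x+s\,\widehat H_p(0,x,p)+O(s^2)$, the loop condition $\pi\circ\Phi^{s_k}(x,p_{s_k}(0))=x$ together with the a priori bound on $p_{s_k}(0)$ forces $\widehat H_p(0,x,p_{s_k}(0))\to 0$, hence $\dot\xi_{s_k}(0)\to 0$ and then $\dot\xi_{s_k}(s_k)=\dot\xi_{s_k}(0)+O(s_k)\to 0$. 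It follows that $p_{s_k}=\widehat L_v(s_k,x,\dot\xi_{s_k}(s_k))\to L_v(x,0)=:p_0\in D^+v(x)$ with $H_p(x,p_0)=0$, which is the desired contradiction. The identification of $v_0$ with $\dot{\mathbf{x}}(0^+)$ that you flag as the ``main obstacle'' is simply not needed.

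One further remark: the paper's definition $0\in\lambda v(x)+H_p(x,D^+v(x))$ adds a scalar to a set of vectors and is almost certainly a misprint for $0\in H_p(x,D^+v(x))$, which is the standard condition in the cited references and precisely what both arguments deliver. Your attempt to reconcile the two via ``$\lambda v(x)+H(x,p_0)=0$ on $D^+v$'' is shaky in any case, since at a singular point one only has $\lambda v(x)+H(x,p)\leqslant 0$ for $p\in D^+v(x)$, with equality only on $D^*v(x)$; it is cleanest to work with the corrected condition.
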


\medskip

\subsection{Existence of global Lipschitz viscosity solution of \eqref{eq:discount} }
We assume $L=L(x,v):\R^n\times\R^n\to\R$ is a function of class $C^2$ satisfying:
\begin{enumerate}[\rm (L1')]
  \item $L(x,\cdot)$ is strictly convex for all $x\in\R^n$.
  \item There exist $c_1,c_2\geqslant 0$ and two superlinear functions $\theta_1,\theta_2:[0,+\infty)\to[0,+\infty)$ such that
  \begin{align*}
  \theta_2(|v|)+c_2 \geqslant L(x,v) \geqslant \theta_1(|v|)-c_1,\qquad \forall (x,v)\in\R^n\times\R^n.
  \end{align*}
\end{enumerate}

The associated Hamiltonian $H:\R^n\times\R^n\times\R\to\R$ is defined by
\begin{align*}
	H(x,p)=\sup_{v\in\R^n}\{\langle p,v\rangle-L(x,v)\},\qquad (x,p)\in\R^n\times\R^n.
\end{align*}
%



\begin{The}\label{thm:existence of neg wkam solu}
Suppose $L:\R^n\times\R^n\to\R$ satisfies \mbox{\rm (L1')-(L2')} and $\lambda>0$. Then there exists $u_0:\R^n\to\R$ such that $u_0$ is the unique   bounded and Lipschitz viscosity solution of \eqref{eq:HJs} on $\R^n$.
\end{The}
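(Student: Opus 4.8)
The plan is to realise $u_0$ as the value function of the infinite--horizon discounted variational problem and then to transfer the PDE (and regularity) via Lemma \ref{equ1}. I would set
\begin{equation*}
u_0(x):=\inf\Big\{\int_{-\infty}^{0}e^{\lambda s}L(\gamma(s),\dot\gamma(s))\,ds\ :\ \gamma\in W^{1,1}_{\mathrm{loc}}((-\infty,0],\R^n),\ \gamma(0)=x\Big\}.
\end{equation*}
First, $u_0$ is well defined and bounded: the constant curve $\gamma\equiv x$ gives $u_0(x)\le L(x,0)/\lambda\le(\theta_2(0)+c_2)/\lambda$ by \mbox{\rm{(L2')}}, while $L\ge-c_1$ gives $u_0(x)\ge-c_1/\lambda$, so the infimum is a finite real number and $\|u_0\|_\infty\le\max\{c_1,\theta_2(0)+c_2\}/\lambda$. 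Reparametrising by $s\mapsto s+t$ and splitting the integral at $s=-t$ (just as in the derivations elsewhere in the paper) yields the dynamic programming identity
\begin{equation*}
e^{\lambda t}u_0(x)=T^-_{0,t}u_0(x):=\inf_{\xi(0)=z,\ \xi(t)=x}\Big\{u_0(z)+\int_0^t e^{\lambda\tau}L(\xi(\tau),\dot\xi(\tau))\,d\tau\Big\},\qquad t>0,
\end{equation*}
where $T^-$ is the Lax--Oleinik operator associated with $\widehat L(s,x,v):=e^{\lambda s}L(x,v)$; equivalently $u_0$ is a fixed point of $\Phi:=e^{-\lambda}T^-_{0,1}$. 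Since $\Phi$ maps $C_b(\R^n)$ into itself and $\|\Phi w_1-\Phi w_2\|_\infty\le e^{-\lambda}\|w_1-w_2\|_\infty$ (the integral term cancels), Banach's fixed point theorem shows $u_0$ is the \emph{unique} bounded continuous function satisfying this dynamic programming principle.

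The regularity of $u_0$ rests on a Tonelli--type a priori estimate. Because $u_0$ is bounded and $\widehat L$ superlinear, the identity above with $t=1$ gives $\int_{-1}^0 e^{\lambda s}L(\gamma,\dot\gamma)\,ds\le(1+e^{-\lambda})\|u_0\|_\infty$ for a minimiser $\gamma$ of $u_0(x)$, which together with $L(\gamma,\dot\gamma)\ge\theta_1(|\dot\gamma|)-c_1$ and the superlinearity of $\theta_1$ bounds $\int_{-1}^0|\dot\gamma|\,ds$, hence the displacement $|\gamma(s)-x|$ on $[-1,0]$, by a constant $D$ depending only on $\lambda,c_1,c_2,\theta_1,\theta_2$ and $\|u_0\|_\infty$; the (damped) Euler--Lagrange equation then bounds $|\dot\gamma|$ pointwise on $[-1,0]$ by a constant that is locally uniform in $x$. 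Comparing $u_0(x)$ with $u_0(y)$ for $x,y$ close by a competitor that keeps a minimiser $\gamma$ of $u_0(x)$ on $(-\infty,-1]$ and replaces it on $[-1,0]$ by $s\mapsto\gamma(s)+\phi(s)(y-x)$, with $\phi:(-\infty,0]\to[0,1]$ smooth, $\phi(0)=1$ and $\phi\equiv0$ on $(-\infty,-1]$, gives $u_0(y)-u_0(x)\le C|x-y|$ (all states and velocities staying in a fixed compact set on which the $C^2$ Lagrangian $L$ is Lipschitz); hence $u_0$ is \emph{locally} Lipschitz. The dynamic programming identity and the continuity of $u_0$ then imply, by the standard verification argument, that $u_0$ is a viscosity solution of \eqref{eq:HJs} (alternatively, since $\widehat L$ satisfies \mbox{\rm{(L1)-(L3)}} exactly as observed in the proof of Theorem \ref{propagation of singularitie global1}, Proposition \ref{pro:properties of u}\,(1) shows $(t,x)\mapsto T^-_{0,t}u_0(x)=e^{\lambda t}u_0(x)$ solves \eqref{eq:dis}, which by Lemma \ref{equ1} is equivalent). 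Finally, \mbox{\rm{(L2')}} makes $H(x,\cdot)$ coercive uniformly in $x$ (from $H(x,p)=\sup_v\{\langle p,v\rangle-L(x,v)\}\ge\sup_v\{\langle p,v\rangle-\theta_2(|v|)\}-c_2$), so the relation $H(x,Du_0(x))=-\lambda u_0(x)$, valid a.e.\ and bounded by $\lambda\|u_0\|_\infty$, forces $|Du_0|\le R^*$ a.e.; being locally Lipschitz with a.e.\ gradient bounded by $R^*$, $u_0$ is in fact globally Lipschitz.

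It remains to prove uniqueness among bounded Lipschitz viscosity solutions. If $w$ is such a solution, then by Lemma \ref{equ1} the Lipschitz function $(t,x)\mapsto e^{\lambda t}w(x)$ is a viscosity solution of \eqref{eq:dis} with initial datum $w$; by the classical comparison/uniqueness theorem for the Cauchy problem (see, e.g., \cite{Cannarsa_Sinestrari_book}) it must coincide with $T^-_{0,t}w(x)$, so $e^{\lambda t}w=T^-_{0,t}w$ for all $t>0$, which at $t=1$ reads $w=\Phi w$, and uniqueness of the fixed point of the contraction $\Phi$ on $C_b(\R^n)$ forces $w=u_0$. (Alternatively one may invoke directly the comparison principle for \eqref{eq:HJs}, for which the zeroth--order term $\lambda v$ with $\lambda>0$ supplies the monotonicity needed to compare bounded sub-- and supersolutions.) The main obstacle in this scheme is the a priori regularity of the second paragraph, i.e.\ the Tonelli estimates for the discounted infinite--horizon problem giving local Lipschitz continuity of $u_0$; once that is in hand, the reduction to the evolutionary equation through Proposition \ref{pro:properties of u} and Lemma \ref{equ1}, and the upgrade to a global Lipschitz bound via coercivity of $H$, are routine.
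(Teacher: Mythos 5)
Your proposal is correct in substance and reaches the same object as the paper, but by a genuinely different route. The paper never writes down the infinite-horizon value function: it starts from the constant $-K_1$, shows $-K_1\leqslant T_t^-(-K_1)\leqslant K_2$ (Lemma \ref{lem:T bounded contract}), and uses the contraction property $\|T_t^-u_1-T_t^-u_2\|_\infty\leqslant e^{-\lambda t}\|u_1-u_2\|_\infty$ to make $T_k^-(-K_1)$ a Cauchy sequence whose limit $u_0$ is a fixed point of the semigroup; you instead define $u_0$ directly as $\inf\int_{-\infty}^0 e^{\lambda s}L$, verify the dynamic programming identity, and apply Banach's fixed point theorem to $\Phi=e^{-\lambda}T^-_{0,1}$. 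These are the same contraction mechanism and the two candidates coincide (shifting time in $T_t^-(-K_1)$ exhibits it as the truncated infinite-horizon value), so this part is a matter of presentation. The real divergence is the Lipschitz estimate. The paper's Lemma \ref{lem:bounded dom by L implies lip} gets the \emph{global} bound $\mbox{\rm Lip}[u_0]\leqslant\theta_2(1)+c_2+\lambda\|u_0\|_\infty$ in three lines by testing the domination inequality $u\leqslant T_t^-u$ with the unit-speed segment from $x$ to $y$ over the time interval $[0,|y-x|]$; this uses only (L2') and boundedness. Your route—existence of infinite-horizon minimizers, integral and then pointwise Tonelli velocity bounds via the damped Euler--Lagrange equation, a deformation competitor giving local Lipschitz continuity, and finally the coercivity of $H$ plus $H(x,Du_0)=-\lambda u_0$ a.e.\ to upgrade to a global bound—is workable (the coercivity upgrade is a nice observation), but it is exactly the part you yourself flag as the obstacle, and it leaves nontrivial steps unproved: the existence of a minimizer for the improper-integral problem, and the locally uniform pointwise velocity bound, which requires an argument in the spirit of Proposition \ref{A.1} applied to $\widehat L(s,x,v)=e^{\lambda s}L(x,v)$. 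Since the straight-line competitor in the domination inequality makes all of that unnecessary, you should replace the second paragraph by that argument. A further minor point: invoking Banach's theorem on $C_b(\R^n)$ before knowing that your value function is continuous is slightly circular; either establish continuity first (which the domination argument does) or phrase the fixed point as the limit of iterates, as the paper does. Your uniqueness argument via the comparison principle for the Cauchy problem is a legitimate substitute for the paper's Lemma \ref{lem:neg wkam solu is viscosity solu}, which encapsulates the same equivalence between bounded viscosity solutions and fixed points of $T_t^-$.
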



\begin{Rem}
	In Theorem \ref{propagation of singularitie global1}, we suppose $u:\R^n\to \R$ is a globally Lipschitz continuous viscosity solution of equation \eqref{eq:discount}. Actually, the viscosity solutions of equation \eqref{eq:discount} are not always globally Lipschitz continuous. There is a counterexample as follows:
\begin{equation}\label{eq:counterexample global Lipschitz}
u(x)+\frac{1}{2} |Du(x)|^2=0, \quad x\in \R^n,
\end{equation}
Obviously, $u_1(x)=-\frac{1}{2}x^2$ and $u_2(x)\equiv 0$ are both viscosity solutions of \eqref{eq:counterexample global Lipschitz}. But $u_1(x)=-\frac{1}{2}x^2$ is not globally Lipschitz on $\R^n$ and $u_2(x)\equiv 0$ is the unique globally Lipschitz viscosity solution of \ref{eq:counterexample global Lipschitz}.
\end{Rem}

For any function $u:\R^n\to\R$ and $t>0$, we define the Lax-Oleinik operator (See \cite{CCJWY2020})
\begin{equation}\label{eq:Tt-}
	T_{t}^{-}u(x)=\inf_{\xi\in \Gamma_{\cdot ,x}^{0,t} } \Big\{ e^{-\lambda t} u(\xi(0))+\int_{0}^{t}e^{\lambda (s-t)} L(\xi,\dot{\xi})\ ds\Big\}, \qquad x\in\R^n.
\end{equation}
Recall some properties of $T_t^-$ as follows:
\begin{Lem}\label{lem:T properties}
\begin{enumerate}[\rm (1)]
	\item For any $u:\R^n\to\R$ and $t_1,t_2>0$, we have
	\begin{align*}
		T_{t_1}^{-}T_{t_2}^{-}u=T_{t_1+t_2}^{-}u.
	\end{align*}
    \item Set $u_i:\R^n\to\R,\ i=1,2$. If $u_1\leqslant u_2$, then there holds
    \begin{align*}
    	T_{t}^{-}u_1\leqslant T_{t}^{-}u_2,\qquad \forall t>0
    \end{align*}
    \item Suppose $u_i:\R^n\to\R,\ i=1,2$ are bounded on $\R^n$. Then we have
    \begin{align*}
    	\|T_{t}^{-}u_1-T_{t}^{-}u_2\|_{\infty}\leqslant e^{-\lambda t}\|u_1-u_2\|_{\infty},\qquad \forall t>0.
    \end{align*}
    \item For any $t>0$, $u\leqslant T_{t}^{-}u$ if and only if for any absolutely continuous curve $\gamma:[a,b]\to \R^n $, there holds 
   \begin{equation}\label{dominated}
   	e^{\lambda b}u(\gamma(b)) \leqslant 	e^{\lambda a}u(\gamma(a)) +\int_a^b e^{\lambda t}L(\gamma(t),\dot \gamma(t)) \ dt.
   \end{equation}
\end{enumerate}
\end{Lem}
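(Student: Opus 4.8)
The plan is to verify all four items directly from the definition \eqref{eq:Tt-}, using that $L=L(x,v)$ is autonomous, so that for any admissible curve the time-translate $s\mapsto\xi(a+s)$ is again admissible and carries the discount weights along correctly. Throughout I would work with $\varepsilon$-optimal competitors rather than honest minimizers, even though Tonelli's theorem provides genuine minimizers in \eqref{eq:Tt-}.

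For (1) I would run the usual dynamic-programming splitting while carefully tracking the discount factors. Fix $x\in\R^n$ and $\varepsilon>0$; choose an $\varepsilon$-optimal competitor $\xi\in\Gamma_{\cdot,x}^{0,t_1}$ in $T_{t_1}^{-}(T_{t_2}^{-}u)(x)$ and then an $\varepsilon$-optimal $\eta\in\Gamma_{\cdot,\xi(0)}^{0,t_2}$ in $T_{t_2}^{-}u(\xi(0))$. Concatenating $\eta$ with the time-shift of $\xi$ produces $\zeta\in\Gamma_{\cdot,x}^{0,t_1+t_2}$; using $\zeta(0)=\eta(0)$, the identity $e^{-\lambda(t_1+t_2)}=e^{-\lambda t_1}e^{-\lambda t_2}$, the relation $e^{-\lambda t_1}e^{\lambda(\sigma-t_2)}=e^{\lambda(\sigma-t_1-t_2)}$ on $[0,t_2]$, and the change of variable $s\mapsto s+t_2$ on $[0,t_1]$, one checks that the objective value of $\zeta$ in $T_{t_1+t_2}^{-}u(x)$ equals the sum of $e^{-\lambda t_1}$ times the value of $\eta$ in $T_{t_2}^{-}u(\xi(0))$ and the integral term of $\xi$ in $T_{t_1}^{-}(T_{t_2}^{-}u)(x)$. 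Letting $\varepsilon\to0$ gives $T_{t_1+t_2}^{-}u\leqslant T_{t_1}^{-}(T_{t_2}^{-}u)$. Conversely, an $\varepsilon$-optimal $\zeta\in\Gamma_{\cdot,x}^{0,t_1+t_2}$ restricts to $\eta=\zeta|_{[0,t_2]}$ and a shift $\xi$ of $\zeta|_{[t_2,t_1+t_2]}$, and running the same identity backwards bounds $T_{t_1}^{-}(T_{t_2}^{-}u)(x)$ by the value of $\zeta$, hence by $T_{t_1+t_2}^{-}u(x)+\varepsilon$; again let $\varepsilon\to0$.

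Items (2) and (3) are then formal. Monotonicity holds because, for each admissible $\xi$, the brackets defining $T_t^{-}u_1$ and $T_t^{-}u_2$ differ only in the term $e^{-\lambda t}u_i(\xi(0))$, so the infima are ordered accordingly. For (3) I would first record that $T_t^{-}(w+c)=T_t^{-}w+e^{-\lambda t}c$ for every constant $c$ (the constant is added to the boundary value $u(\xi(0))$ and so picks up exactly the weight $e^{-\lambda t}$); applying (2) to $u_1\leqslant u_2+\|u_1-u_2\|_\infty$ and to the symmetric inequality yields the contraction, all quantities being finite since the $u_i$ are bounded and $L\geqslant-c_1$ from (L2') keeps the integral terms bounded below. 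For (4), reading the assertion as ``$u\leqslant T_t^{-}u$ for every $t>0$'' $\iff$ ``\eqref{dominated} holds along every absolutely continuous curve'': in the direction $\Leftarrow$, fix $t>0$, $x\in\R^n$ and $\xi\in\Gamma_{\cdot,x}^{0,t}$, apply \eqref{dominated} with $(a,b,\gamma)=(0,t,\xi)$, divide by $e^{\lambda t}$, and take the infimum over $\xi$ to get $u(x)\leqslant T_t^{-}u(x)$; in the direction $\Rightarrow$, given $\gamma:[a,b]\to\R^n$ apply $u\leqslant T_{b-a}^{-}u$ at the point $\gamma(b)$ with the competitor $s\mapsto\gamma(a+s)$, then multiply by $e^{\lambda b}$ and substitute $s\mapsto a+s$ in the integral to recover \eqref{dominated}.

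The only genuinely delicate point is (1): one has to keep the three discount weights $e^{-\lambda t_1}$, $e^{-\lambda t_2}$ and $e^{\lambda(\sigma-t_2)}$ bookkept exactly through the concatenation and the two changes of variables, and using $\varepsilon$-optimal curves avoids any measurable-selection issue. Everything else follows formally from the definition of $T_t^{-}$ and the autonomy of $L$.
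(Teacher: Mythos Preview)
Your proof is correct. The paper does not actually supply its own proof of this lemma: it introduces the statement with ``Recall some properties of $T_t^-$ as follows'' and treats the four items as standard facts (implicitly referring back to \cite{CCJWY2020}, where the discounted Lax--Oleinik operator is studied). Your argument is exactly the standard one a reader would reconstruct: the dynamic-programming concatenation with careful tracking of the discount factors for (1), and then (2)--(4) as formal consequences of the definition and the autonomy of $L$. The bookkeeping you describe for (1)---the identity $e^{-\lambda t_1}e^{\lambda(\sigma-t_2)}=e^{\lambda(\sigma-t_1-t_2)}$ on $[0,t_2]$ and the shift $s\mapsto s+t_2$ on $[0,t_1]$---is precisely what makes the concatenated curve realise the correct value in $T_{t_1+t_2}^-u$, and your reading of (4) (quantifying ``for all $t>0$'' on the left of the biconditional) is the intended one, consistent with how the paper uses the lemma afterwards.
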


\begin{Lem} \label{lem:T bounded contract}
There exists positive constants $K_1=c_1/\lambda$, $K_2=(\theta_2(0)+c_2)/\lambda$ such that
$$
-K_1\leqslant	T_{t}^{-}(-K_1)(x)\leqslant K_2,\qquad \forall t>0,x\in\R^n.
$$
\end{Lem}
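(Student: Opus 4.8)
The plan is to verify the two inequalities separately by testing against a constant curve and using the bounds on $L$ from (L2'). For the upper bound, I would fix $x\in\R^n$ and $t>0$ and estimate $T_t^-(-K_1)(x)$ from above by plugging the constant curve $\xi\equiv x$ into the infimum in \eqref{eq:Tt-}. Then
\[
T_t^-(-K_1)(x)\leqslant e^{-\lambda t}(-K_1)+\int_0^t e^{\lambda(s-t)}L(x,0)\,ds
\leqslant e^{-\lambda t}(-K_1)+(\theta_2(0)+c_2)\int_0^t e^{\lambda(s-t)}\,ds,
\]
where in the last step I used (L2') to bound $L(x,0)\leqslant\theta_2(|0|)+c_2=\theta_2(0)+c_2$. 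Since $\int_0^t e^{\lambda(s-t)}\,ds=\tfrac{1}{\lambda}(1-e^{-\lambda t})\leqslant\tfrac{1}{\lambda}$ and $-K_1<0$, this gives $T_t^-(-K_1)(x)\leqslant\tfrac{\theta_2(0)+c_2}{\lambda}=K_2$, which is the right-hand inequality.

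For the lower bound, I would use the coercivity estimate $L(y,v)\geqslant\theta_1(|v|)-c_1\geqslant-c_1$ for every $(y,v)$, valid by (L2'). Then for any competitor curve $\xi\in\Gamma^{0,t}_{\cdot,x}$,
\[
e^{-\lambda t}(-K_1)(\xi(0))+\int_0^t e^{\lambda(s-t)}L(\xi,\dot\xi)\,ds
\geqslant -e^{-\lambda t}K_1-c_1\int_0^t e^{\lambda(s-t)}\,ds
\geqslant -e^{-\lambda t}K_1-c_1\cdot\frac{1-e^{-\lambda t}}{\lambda}.
\]
Recalling $K_1=c_1/\lambda$, the right-hand side equals $-e^{-\lambda t}K_1-K_1(1-e^{-\lambda t})=-K_1$. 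Taking the infimum over all such $\xi$ yields $T_t^-(-K_1)(x)\geqslant-K_1$, which is the left-hand inequality. Combining the two bounds completes the proof.

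There is essentially no obstacle here: the only thing to be careful about is that $-K_1$ is negative, so one must keep track of signs when inserting $e^{-\lambda t}$, and one should make sure the integral $\int_0^t e^{\lambda(s-t)}\,ds$ is evaluated correctly as $\tfrac{1}{\lambda}(1-e^{-\lambda t})$, which is bounded by $\tfrac1\lambda$ uniformly in $t>0$. The constant curve is an admissible competitor in \eqref{eq:Tt-} because $\Gamma^{0,t}_{\cdot,x}$ only constrains the right endpoint, so no further justification is needed for the upper bound.
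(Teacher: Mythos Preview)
Your proof is correct and follows essentially the same approach as the paper: the upper bound comes from testing the constant curve $\xi\equiv x$ together with $L(x,0)\leqslant\theta_2(0)+c_2$, and the lower bound comes from $L\geqslant\theta_1(|\cdot|)-c_1\geqslant -c_1$ applied inside the integral, with the identity $e^{-\lambda t}K_1+(1-e^{-\lambda t})K_1=K_1$. The only cosmetic difference is that the paper picks a minimizer $\eta$ for the lower bound, whereas you argue for an arbitrary competitor and then take the infimum; your version is in fact slightly cleaner since it does not require knowing that a minimizer exists.
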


\begin{proof}
	On the one hand, for any $t>0$, $x\in\R^n$, consider the curve $\xi(s)\equiv x$ in \eqref{eq:Tt-}, then  we obtain that
\begin{align*}
	T_t^-(-K_1)(x)\leqslant &\, -e^{-\lambda t}K_1+
	\int_{0}^{t}e^{\lambda(s-t)} L(x,0)\ ds\\
	 \leqslant &\, \int_{0}^{t}e^{(s-t)\lambda}(\theta_{2}(0)+c_2)\ ds \\
	 \leqslant &\, \frac{\theta_2(0)+c_2}{\lambda}=K_2. 
\end{align*}
On the other hand, let $\eta \in \Gamma_{\cdot ,x}^{0,t}$ be a minimizer for \eqref{eq:Tt-}. It follows that
\begin{align*}
	T_t^-(-K_1)(x)=&\, -e^{-\lambda t} K_1+\int_{0}^{t} e^{\lambda (s-t)} L(\eta,\dot{\eta})\ ds \\
	 \geqslant &\,  -e^{-\lambda t} K_1+ \int_{0}^{t}e^{\lambda (s-t)}\big(\theta_{1}(|\dot{\eta}(s)|)-c_1 \big) \ ds \\
	 \geqslant &\,  -e^{-\lambda t} K_1 - c_1 \int_{0}^{t}e^{\lambda (s-t)}\ ds \\
	 =&\, -e^{-\lambda t} K_1 - (1-e^{-\lambda t} ) K_1=-K_1.
\end{align*}
This completes our proof.
\end{proof}

Similar to \cite[lemma 2.2]{MR3135343}, we show that bounded subsolutions is Lipschitz on $\R^n$ as follows:
\begin{Lem}\label{lem:bounded dom by L implies lip}
Suppose $u:\R^n\to\R$ is bounded on $\R^n$ and $u\leqslant T_{t}^{-}u$ for any $t>0$. Then $u$ is Lipschitz on $\R^n$.
\end{Lem}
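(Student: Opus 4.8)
The plan is to convert the hypothesis $u\leqslant T_t^-u$ into the dominated inequality \eqref{dominated} and then test that inequality on straight line segments. By part (4) of Lemma \ref{lem:T properties}, the assumption that $u\leqslant T_t^-u$ for all $t>0$ is \emph{equivalent} to the statement that for every absolutely continuous curve $\gamma:[a,b]\to\R^n$ one has
$$
e^{\lambda b}u(\gamma(b))\leqslant e^{\lambda a}u(\gamma(a))+\int_a^b e^{\lambda t}L(\gamma(t),\dot\gamma(t))\,dt .
$$
So the first step is just to quote Lemma \ref{lem:T properties}(4) and work from this form.

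Next I would fix $x,y\in\R^n$ with $x\neq y$, set $\delta=|x-y|$, and take $\gamma:[0,\delta]\to\R^n$ to be the constant-speed parametrization of the segment joining $x$ to $y$, so that $\gamma(0)=x$, $\gamma(\delta)=y$ and $|\dot\gamma(t)|\equiv 1$. Inserting this curve into the dominated inequality with $a=0$, $b=\delta$, dividing by $e^{\lambda\delta}$ and rearranging gives
$$
u(y)-u(x)\leqslant (1-e^{-\lambda\delta})\,|u(x)|+\int_0^\delta e^{\lambda(t-\delta)}L(\gamma(t),\dot\gamma(t))\,dt .
$$
I would then bound the right-hand side using $1-e^{-\lambda\delta}\leqslant\lambda\delta$, the boundedness of $u$, the upper estimate $L(x,v)\leqslant\theta_2(|v|)+c_2$ from \mbox{\rm(L2')}, and $\int_0^\delta e^{\lambda(t-\delta)}\,dt=(1-e^{-\lambda\delta})/\lambda\leqslant\delta$. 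Since $|\dot\gamma|\equiv 1$ along $\gamma$, this yields
$$
u(y)-u(x)\leqslant \bigl(\lambda\|u\|_\infty+\theta_2(1)+c_2\bigr)\,|x-y| .
$$
Running the same argument with the segment traversed from $y$ to $x$ gives the reverse inequality, so $|u(x)-u(y)|\leqslant C|x-y|$ with $C:=\lambda\|u\|_\infty+\theta_2(1)+c_2$ independent of $x$ and $y$; hence $u$ is globally Lipschitz on $\R^n$.

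There is no genuine obstacle here: once the hypothesis is put in dominated form the estimate is a one-line computation. The only points needing a little care are the bookkeeping with the discount weights $e^{\lambda t}$ (which are absent in the classical non-discounted statement, e.g. \cite[Lemma 2.2]{MR3135343}) and the choice $\delta=|x-y|$. The latter is dictated by the superlinearity of $\theta_2$: shrinking $\delta$ would force $|\dot\gamma|=|x-y|/\delta$ to grow and make the term $\delta\,\theta_2(|x-y|/\delta)$ blow up, while the choice $|\dot\gamma|\equiv 1$ keeps the Lipschitz constant uniform.
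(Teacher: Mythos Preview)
Your proof is correct and essentially identical to the paper's: both test the dominated inequality on the unit-speed segment $\xi(s)=x+s\frac{y-x}{|y-x|}$, $s\in[0,|y-x|]$, and bound $L$ by $\theta_2(1)+c_2$ to obtain the same Lipschitz constant $\lambda\|u\|_\infty+\theta_2(1)+c_2$. The only cosmetic difference is that the paper derives the key inequality \eqref{pf:2.5 1} directly from the definition of $T_t^-$ rather than quoting Lemma~\ref{lem:T properties}(4).
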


\begin{proof}
For $x,y\in\R^n$, $x\neq y$, consider the curve $\xi(s)=x+s\frac{(y-x)}{|y-x|},\ s\in[0,|y-x|]$. Then $u\leqslant T_{t}^{-}u$ implies that
\begin{equation}\label{pf:2.5 1}
    u(y)\leqslant e^{-\lambda |y-x|} u(x)+\int_{0}^{|y-x|}e^{\lambda (s- |y-x|)} L \Big(x+s\frac{(y-x)}{|y-x|},\frac{(y-x)}{|y-x|} \Big) \ ds,
\end{equation}
It follows that
\begin{align*}
	&\,u(y)-u(x)\\
	\leqslant &\, \frac{1-e^{-\lambda |y-x|}}{\lambda}\cdot ( -\lambda u(x))+\int_{0}^{|y-x|}e^{\lambda (s- |y-x|)} L \Big(x+s\frac{(y-x)}{|y-x|},\frac{(y-x)}{|y-x|} \Big) \ ds\\
	= &\, \int_{0}^{|y-x|}e^{\lambda (s- |y-x|)}\Bigg[ L \Big(x+s\frac{(y-x)}{|y-x|},\frac{(y-x)}{|y-x|} \Big) - \lambda u(x) \Bigg] \ ds \\
	\leqslant &\,  \int_{0}^{|y-x|}e^{\lambda(s-|y-x|)}\Big(\theta_{2}(1)+c_2+\lambda \|u\|_{\infty}\Big) \ ds \\
	\leqslant &\, (\theta_{2}(1)+c_2+\lambda \|u\|_{\infty})|y-x|.
\end{align*}
Similarly, there holds $u(x)-u(y)\leqslant (\theta_{2}(1)+c_2+\lambda \|u\|_{\infty})|y-x|$. Therefore,
\begin{align*}
	|u(y)-u(x)|\leqslant(\theta_{2}(1)+c_2+\lambda \|u\|_{\infty})|y-x|,\qquad \forall x,y\in\R^n.
\end{align*}
\end{proof}

Following Fathi (\cite{Fathi1997_1,Fathi1997_2}), $u\in C^0(\R^n,\R)$ is called a weak-KAM solution of \eqref{eq:HJs} if
	\begin{align*}
		T_{t}^{-}u=u,\qquad \forall t>0.
	\end{align*}

\begin{Lem}\label{lem:neg wkam solu is viscosity solu}
Suppose $u:\R^n\to\R$ is bounded on $\R^n$. Then $u$ is a  weak-KAM solution of \eqref{eq:HJs} if and only if $u$ is a viscosity solution of \eqref{eq:HJs}.
\end{Lem}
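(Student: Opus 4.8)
The plan is to transfer the question to the time‑dependent Cauchy problem \eqref{eq:dis} already analysed in Section \ref{Sect.2} and then read off both implications from the machinery developed there. Write $w(t,x):=e^{\lambda t}u(x)$ and let $\widehat L(t,x,v)=e^{\lambda t}L(x,v)$ be the Lagrangian associated with $\widehat H$; as noted in the proof of Theorem \ref{propagation of singularitie global1}, $\widehat L$ satisfies \mbox{\rm{(L1)-(L3)}}. A direct change of the exponential weight in the definitions gives the identity $T^-_{0,t}u(x)=e^{\lambda t}T^-_{t}u(x)$, where on the left $T^-_{0,t}$ is the Lax--Oleinik operator \eqref{T -} built from $\widehat L$ (both sides equal $\inf_{\xi}\{u(\xi(0))+\int_0^t e^{\lambda s}L(\xi,\dot\xi)\,ds\}$ over curves ending at $x$). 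Consequently, ``$u$ is a weak‑KAM solution of \eqref{eq:HJs}'' is equivalent to ``$w(t,\cdot)=T^-_{0,t}u$ for all $t>0$'', while, by Lemma \ref{equ1}, ``$u$ is a viscosity solution of \eqref{eq:HJs}'' is equivalent to ``$w$ is a viscosity solution of \eqref{eq:dis} with $w(0,\cdot)=u$''.

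For the implication weak‑KAM $\Rightarrow$ viscosity, observe that $u=T^-_{t}u$ gives in particular $u\leqslant T^-_{t}u$ for all $t>0$, so $u$ is Lipschitz on $\R^n$ by Lemma \ref{lem:bounded dom by L implies lip}. Hence Proposition \ref{pro:properties of u}\,(1), applied with initial datum $u_0:=u$ and Lagrangian $\widehat L$, says that $T^-_{0,t}u$ is a viscosity solution of \eqref{eq:dis}; but $T^-_{0,t}u=e^{\lambda t}T^-_{t}u=e^{\lambda t}u=w$, so $w$ solves \eqref{eq:dis}, and Lemma \ref{equ1} returns that $u$ solves \eqref{eq:HJs}. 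Note that this direction uses no uniqueness statement.

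For the converse, let $u$ be a bounded viscosity solution of \eqref{eq:HJs}; it is in particular a subsolution, and since \mbox{\rm (L2')} makes $H(x,\cdot)$ coercive uniformly in $x$, the a.e. bound $H(x,Du(x))\leqslant\lambda\|u\|_\infty$ forces $u$ to be globally Lipschitz. Then $w=e^{\lambda t}u$ (by Lemma \ref{equ1}) and $T^-_{0,t}u$ (by Proposition \ref{pro:properties of u}\,(1)) are both viscosity solutions of \eqref{eq:dis} sharing the initial datum $u$, and both are locally Lipschitz and bounded on each interval $[0,T]$; the comparison principle for \eqref{eq:dis} then yields $w=T^-_{0,t}u=e^{\lambda t}T^-_{t}u$, i.e.\ $u=T^-_{t}u$ for all $t>0$. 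The main obstacle is precisely this last step — securing comparison/uniqueness for the evolutionary problem \eqref{eq:dis} on the whole noncompact $\R^n$ with the time‑dependent, merely coercive Hamiltonian $\widehat H$. If one prefers to bypass it, the two viscosity inequalities can instead be obtained directly from the identity $u=T^-_{t}u$: the subsolution inequality by inserting the straight curves $\xi(s)=x_0+(s-t)v$ and letting $t\to0^+$, and the supersolution inequality by using a minimizer $\xi_t$ for $T^-_{t}u(x_0)$, where the key \emph{a priori} estimate (the analogue of Lemma \ref{T - T + est1}) is that $\int_0^t|\dot\xi_t|\,ds\to0$ as $t\to0^+$ — a consequence of the superlinearity of $\theta_1$ and the boundedness of $u$ — so that $\xi_t$ collapses to $x_0$ and a Fenchel‑plus‑Taylor expansion closes the argument; but the route through Section \ref{Sect.2} is shorter given what is already available.
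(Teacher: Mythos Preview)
The paper states this lemma without proof, treating it as standard; so there is no ``paper's own proof'' to compare against, and the assessment below concerns only the internal correctness of your argument.

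Your forward direction (weak\mbox{-}KAM $\Rightarrow$ viscosity) is sound: the identity $T^-_{0,t}u=e^{\lambda t}T^-_t u$ is correct, Lemma~\ref{lem:bounded dom by L implies lip} supplies the Lipschitz regularity needed to feed $u$ as initial datum into Proposition~\ref{pro:properties of u}\,(1), and Lemma~\ref{equ1} closes the loop.

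The reverse direction, however, has a genuine gap. You reduce viscosity $\Rightarrow$ weak\mbox{-}KAM to a comparison/uniqueness statement for the evolutionary problem \eqref{eq:dis} on the noncompact $\R^n$, correctly flag that this is not established in the paper, and then write ``If one prefers to bypass it, the two viscosity inequalities can instead be obtained directly from the identity $u=T^-_t u$\dots''. But this bypass is pointed at the \emph{wrong} implication: deriving the viscosity inequalities from the fixed\mbox{-}point identity is the content of weak\mbox{-}KAM $\Rightarrow$ viscosity, which you have already handled. It does nothing toward showing that a bounded viscosity solution satisfies $u=T^-_t u$. As written, the converse therefore rests entirely on the unproved comparison principle.

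If you want to close the gap without invoking evolutionary comparison, the natural route is to argue directly on the stationary equation: show $u\leqslant T^-_t u$ from the viscosity \emph{subsolution} property (this is the domination inequality of Lemma~\ref{lem:T properties}\,(4), and is where the coercivity/Lipschitz bound you mention actually does work), and show $u\geqslant T^-_t u$ from the viscosity \emph{supersolution} property by producing, for each $x$, a backward calibrated curve via characteristics. Alternatively, one may appeal to comparison for the \emph{discounted} equation \eqref{eq:HJs} itself (where the $\lambda>0$ term gives strict monotonicity and makes the noncompact comparison standard), combined with the forward direction and Lemma~\ref{lem:T properties}\,(3): any bounded viscosity solution must coincide with the unique bounded fixed point of $T^-_t$.
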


\medskip
\begin{proof}[Proof of Theorem \ref{thm:existence of neg wkam solu}]
Due to Lemma \ref{lem:T bounded contract}, we have that
\begin{align*}
	-K_1\leqslant	T_{t}^{-}(-K_1)(x)\leqslant K_2,\qquad \forall t>0,x\in\R^n,
\end{align*}
and $T_{t}^{-}(-K_1)$ is non-decreasing for $t>0$ by (1) and (4) of Lemma \ref{lem:T properties}.
It follows that $\|T_{1}^{-}(-K_1)-(-K_1)\|_{\infty}\leqslant K_1+K_2$. Using Lemma \ref{lem:T properties} (3), we obtain that for any $k\in \N$,
\begin{align*}
	\|T_{k+1}^{-}(-K_1)-T_{k}^{-}(-K_1)\|_{\infty} \leqslant &\, e^{-k\lambda}\|T_{1}^{-}(-K_1)-(-K_1)\|_{\infty} \\
	\leqslant &\, e^{-k\lambda}(K_1+K_2),
\end{align*}
which implies
\begin{align*}
\sum_{k=1}^{+\infty}\|T_{k+1}^{-}(-K_1)-T_{k}^{-}(-K_1)\|_{\infty}\leqslant\sum_{k=1}^{+\infty}e^{-k\lambda}(K_1+K_2)=\frac{e^{-\lambda}(K_1+K_2)}{1-e^{-\lambda}}<+\infty.
\end{align*}
Therefore, there exists a unique $u_0:\R^n\to\R$ such that
\begin{equation}\label{pf:mian 1}
	\lim_{t\to+\infty}\|T_{t}^{-}(-K_1)-u_0\|_{\infty}=0
\end{equation}
with
\begin{equation}\label{pf:mian 2}
	-K_1\leqslant u_0(x)\leqslant K_2,\qquad \forall x\in\R^n.
\end{equation}
For any $t'>0$, by \eqref{pf:mian 1} and Lemma \ref{lem:T properties} (3) we have
\begin{equation}\label{pf:mian 3}
	T_{t'}^{-}u_0=T_{t'}^{-}\lim_{t\to+\infty}T_{t}^{-}(-K_1)=\lim_{t\to+\infty}T_{t'}^{-}T_{t}^{-}(-K_1)=u_0.
\end{equation}
\eqref{pf:mian 2}, \eqref{pf:mian 3} and Lemma \ref{lem:bounded dom by L implies lip} implies $u_0$ is Lipschitz on $\R^n$. Now we know that $u_0$ is a  weak-KAM solution of \eqref{eq:HJs} which is bounded and Lipschitz on $\R^n$. By Lemma \ref{lem:neg wkam solu is viscosity solu}, $u_0$ is also a viscosity solution of \eqref{eq:HJs}. The uniqueness of $u_0$ is a direct consequence of Lemma \ref{lem:T properties} (3). This completes the proof of Theorem \ref{thm:existence of neg wkam solu}. 
\end{proof}

\appendix
\section{Regularity properties of fundamental solutions}\label{Appendix A}
Here we collect some relevant regularity results with respect to the fundamental solution of \eqref{cauchy equation}. The proofs of these regularity results are similar to those in \cite{Cannarsa_Cheng3} in autonomous case.
\begin{Pro}\label{A.1}
	Suppose L satisfies condition \mbox{\rm(L1)-(L3)}. Then for any $T>0$, $0\leqslant s<t\leqslant T$, $x,y\in\R^n$, and any minimizer $\xi\in\Gamma^{s,t}_{x,y}$ for $A_{s,t}(x,y)$, we have
	\begin{align*}
		\sup_{\tau\in[s,t]}|\dot\xi(\tau)|\leqslant\kappa(T,\frac{|x-y|}{t-s}).
	\end{align*}
	where $\kappa:(0,+\infty)\times (0,+\infty) \rightarrow (0,+\infty) $ is nondecreasing.
\end{Pro}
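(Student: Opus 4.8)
The plan is to prove this by the classical Tonelli a priori argument: bound the action of $\xi$ by comparison with an explicit competitor, then use the Euler--Lagrange equation and the energy identity to upgrade this into a pointwise velocity bound, while keeping track that every constant produced depends monotonically only on $T$ and on the mean speed $m:=|x-y|/(t-s)$. First I would test $A_{s,t}(x,y)$ with the affine curve $\sigma(\tau)=x+\tfrac{\tau-s}{t-s}(y-x)$, whose velocity is the constant $\tfrac{y-x}{t-s}$; the upper bound in (L2) gives $A_{s,t}(x,y)\leqslant(t-s)\,\overline\theta_T(m)$. Plugging this into the lower bound in (L2) for the minimizer $\xi$ yields $\int_s^t\theta_T(|\dot\xi(\tau)|)\,d\tau\leqslant(t-s)(\overline\theta_T(m)+c_T)$, hence there is $\tau_0\in[s,t]$ with $\theta_T(|\dot\xi(\tau_0)|)\leqslant\overline\theta_T(m)+c_T$, so $|\dot\xi(\tau_0)|$ is bounded by a nondecreasing function of $(T,m)$; moreover, since $L\geqslant-c_T$ and $\int_s^tL(\tau,\xi,\dot\xi)\,d\tau=A_{s,t}(x,y)$, one also gets $\int_s^t|L(\tau,\xi(\tau),\dot\xi(\tau))|\,d\tau\leqslant(t-s)(\overline\theta_T(m)+2c_T)=:B_1(T,m)$.

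Next I would use that, by Proposition \ref{pro:fundamental solution}, the minimizer $\xi$ is $C^2$ and solves the Euler--Lagrange equation, so the energy $E(\tau):=L_v(\tau,\xi(\tau),\dot\xi(\tau))\cdot\dot\xi(\tau)-L(\tau,\xi(\tau),\dot\xi(\tau))$ satisfies $\dot E(\tau)=-L_t(\tau,\xi(\tau),\dot\xi(\tau))$. A preliminary uniform estimate I need here is that strict convexity of $L(\tau,x,\cdot)$ together with (L2) forces $|L_v(\tau,x,v)|\leqslant\overline\theta_T(|v|+1)+c_T$ for all $(\tau,x)\in[0,T]\times\R^n$ (apply the convexity inequality $L(\tau,x,w)\geqslant L(\tau,x,v)+L_v(\tau,x,v)\cdot(w-v)$ at $w=v+L_v(\tau,x,v)/|L_v(\tau,x,v)|$ and use $0\leqslant\theta_T$); combined with the bound on $|\dot\xi(\tau_0)|$ from the first step this bounds $|E(\tau_0)|$ by a monotone function of $(T,m)$. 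Integrating $\dot E=-L_t$ and using (L3) with the $L^1$-bound $B_1$ from the first step, I then obtain $\sup_{\tau\in[s,t]}|E(\tau)|\leqslant|E(\tau_0)|+\widetilde C_1(T)T+\widetilde C_2(T)B_1=:M_1(T,m)$, still monotone in $m$.

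Finally I would convert this uniform energy bound into the desired velocity bound through the Legendre transform. Writing $p(\tau)=L_v(\tau,\xi(\tau),\dot\xi(\tau))$, one has $E(\tau)=H(\tau,\xi(\tau),p(\tau))$ and $\dot\xi(\tau)=H_p(\tau,\xi(\tau),p(\tau))$; superlinearity of $H(\tau,x,\cdot)$, which follows from $L\leqslant\overline\theta_T$ via $H(\tau,x,p)\geqslant R|p|-\overline\theta_T(R)$ for every $R>0$, converts $|E(\tau)|\leqslant M_1$ into $|p(\tau)|\leqslant M_2(T,M_1)$; and the same convexity trick applied to $H$, together with $H(\tau,x,q)\leqslant\theta_T^*(|q|)+c_T$ and $H(\tau,x,p)\geqslant-\overline\theta_T(0)$, gives $|\dot\xi(\tau)|=|H_p(\tau,\xi(\tau),p(\tau))|\leqslant\theta_T^*(M_2+1)+c_T+\overline\theta_T(0)$, which is the asserted bound $\kappa(T,m)$, nondecreasing in $m$ since every intermediate quantity is. The step I expect to be the main obstacle is exactly this uniformity in the noncompact state variable: one cannot simply invoke continuity of $L_v$ and $H_p$ on a compact set, and must instead extract those gradient bounds purely from the growth hypotheses (L1)--(L2) as indicated; by contrast, the energy identity and its Gronwall consequence are routine once the $C^2$-regularity of minimizers from Proposition \ref{pro:fundamental solution} is in hand.
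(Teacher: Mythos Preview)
Your argument is correct and is precisely the standard Tonelli a priori estimate adapted to the time-dependent setting. The paper does not actually supply a proof of Proposition~\ref{A.1}; it only remarks that the regularity results in Appendix~A are proved as in \cite{Cannarsa_Cheng3} for the autonomous case. In that autonomous situation the energy $E(\tau)$ is conserved along extremals, so the bound at a single time $\tau_0$ immediately propagates to all of $[s,t]$; your contribution is the correct replacement of energy conservation by the identity $\dot E(\tau)=-L_t(\tau,\xi(\tau),\dot\xi(\tau))$ together with the growth condition (L3), which yields $\int_s^t|\dot E|\,d\tau\leqslant \widetilde C_1(T)T+\widetilde C_2(T)\,A_{s,t}(x,y)$ (note you can use $A_{s,t}(x,y)$ directly here rather than the $L^1$-bound $B_1$ on $|L|$, which is slightly cleaner but immaterial). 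The subsequent conversion of the uniform energy bound into a uniform bound on $|p(\tau)|$ via the coercivity $H(\tau,x,p)\geqslant R|p|-\overline\theta_T(R)$, and then into a bound on $|\dot\xi(\tau)|=|H_p|$ via convexity of $H$ and the two-sided growth bounds, is exactly right and is the point where uniformity in the noncompact space variable is genuinely needed; your extraction of the gradient bounds for $L_v$ and $H_p$ purely from (L1)--(L2) is the correct way to handle this.
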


Now, for $(s,x)\in[0,+\infty)\times\R^n$, $\lambda>0$ and $\tau>0$, let
\begin{align*}
	S_{\lambda}(s,x,\tau)=\{(t,y)\in\R\times\R^n:s<t<s+\tau,|y-x|<\lambda(t-s)\}.
\end{align*}

\begin{Pro}\label{A.2}
	Suppose L satisfies condition \mbox{\rm(L1)-(L3)}.Then for any fixed $T>0$, $R>0$ and $\lambda>0$, there exists $t_0(s,x,T,R,\lambda)>0$ such that for any $(s,x)\in[0,T]\times\bar{B}(0,R)$
	\begin{enumerate}[\rm (1)]
		\item The function $(t,y)\mapsto A_{s,t}(x,y)$ is semiconcave on the cone $S_{\lambda}(s,x,t_0(T,R,\lambda))$ and there exists $C_0(T,R,\lambda)>0$ such that for all $(t,y)\in S_{\lambda}(s,x,t_0(T,R,\lambda))$, $h\in[0,\frac{1}{2}(t-s))$ and $z\in B(0,\lambda(t-s))$ we have that
		\begin{align*}
			A_{s,t+h}(x,y+z)+A_{s,t-h}(x,y-z)-2A_{s,t}(x,y)\leqslant \frac{C_0(T,R,\lambda)}{t-s}(h^2+|z|^2).
		\end{align*}
		\item The function $(t,y)\mapsto A_{s,t}(x,y)$ is semiconvex on the cone $S_{\lambda}(s,x,t_0(T,R,\lambda))$ and there exists $C_1(T,R,\lambda)>0$ such that for all $(t,y)\in S_{\lambda}(s,x,t_0(T,R,\lambda))$, $h\in[0,\frac{1}{2}(t-s))$ and $z\in B(0,\lambda(t-s))$ we have that
		\begin{align*}
			A_{s,t+h}(x,y+z)+A_{s,t-h}(x,y-z)-2A_{s,t}(x,y)\geqslant -\frac{C_1(T,R,\lambda)}{t-s}(h^2+|z|^2).
		\end{align*}
		\item For all $t\in(s,s+\tau]$, the function $A_{s,t}(x,\cdot)$ is uniformly convex on $B(x,\lambda(t-s))$, and there exists $C_2(T,R,\lambda)>0$ such that for all $y\in B(x,\lambda(t-s))$ and $z\in B(0,\lambda(t-s))$ we have that
	    \begin{align*}
	    	A_{s,t}(x,y+z)+A_{s,t}(x,y-z)-2A_{s,t}(x,y)\geqslant \frac{C_2(T,R,\lambda)}{t-s}|z|^2.
	    \end{align*}
        Moreover, $C(T,R,\lambda)$ is continuous with respect to $R$.
	\end{enumerate}
\end{Pro}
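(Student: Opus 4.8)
The plan is to deduce all three claims from uniform estimates on a fixed compact set, obtaining (1) by perturbing a minimizer of the middle term $A_{s,t}(x,y)$ and (2)--(3) by using an averaged pair of minimizers of the outer terms as a competitor for the middle one; these are quantitative refinements of the autonomous arguments of \cite{Cannarsa_Cheng3}, the only genuinely new point being the time variable in $L$, which is handled by \mbox{\rm{(L3)}}. As a preliminary reduction, fix $T,R,\lambda>0$. By Proposition \ref{A.1}, every minimizer $\xi\in\Gamma^{s,t}_{x,y}$ with $(s,x)\in[0,T]\times\bar B(0,R)$ and $(t,y)$ in the enlarged cone $S_{4\lambda}(s,x,1)$ satisfies $\sup_{[s,t]}|\dot\xi|\le\kappa:=\kappa(T,4\lambda)$, so that $\xi([s,t])\subset\bar B(0,R+\kappa)$ and all the jets appearing below — the perturbed jets $\big(s+(1+b)(\sigma-s),\,\xi(\sigma)+(\sigma-s)\zeta,\,\tfrac{\dot\xi(\sigma)+\zeta}{1+b}\big)$ with $|b|<\tfrac12$, $|\zeta|\le\lambda$, and the jets of the averaged competitors — lie in a compact set $\mathcal K=[0,T+1]\times\bar B(0,R_1)\times\bar B(0,R_2)$ with $R_1=R_1(T,R,\lambda)$ and, crucially, $R_2=R_2(T,\lambda)$ independent of $R$. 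Since $L\in C^2$, on $\mathcal K$ all partial derivatives of $L$ of order $\le2$ are bounded by $M=M(T,R,\lambda)$, while \mbox{\rm{(L1)}} and compactness give $L_{vv}\ge\mu I$ with $\mu=\mu(T,R,\lambda)>0$; since the time-- and velocity--radii of $\mathcal K$ are $R$--independent, $M$ can be taken nondecreasing and $\mu$ nonincreasing in $R$, and both continuous in $R$ — this yields the asserted continuity of the constants in $R$. All estimates are proved for $t-s\le t_0$, with $t_0=t_0(T,R,\lambda)\in(0,\tfrac23]$ small enough that the curves below keep their jets in $\mathcal K$ and two further smallness conditions stated below hold.

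\emph{Part (1).} Let $\xi:[s,t]\to\R^n$ be a minimizer for $A_{s,t}(x,y)$ with $(t,y)\in S_\lambda(s,x,t_0)$, fix $h\in[0,\tfrac12(t-s))$, $z\in B(0,\lambda(t-s))$, and set $a=h/(t-s)$, $w=z/(t-s)$. The curves
\[
\xi^\pm(\tau)=\xi\Big(s+\tfrac{\tau-s}{1\pm a}\Big)\pm\tfrac{\tau-s}{(1\pm a)(t-s)}\,z,\qquad \tau\in[s,t\pm h],
\]
join $x$ to $y\pm z$ by an affine time--reparametrization \emph{fixing the left endpoint} $s$ plus a spatial shift linear in elapsed time, and are competitors for $A_{s,t\pm h}(x,y\pm z)$. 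Substituting into \eqref{action} and changing the time variable back to $\sigma\in[s,t]$ gives
\[
A_{s,t+h}(x,y+z)+A_{s,t-h}(x,y-z)-2A_{s,t}(x,y)\ \le\ \int_s^t\big[\varphi_\sigma(a,w)+\varphi_\sigma(-a,-w)-2\varphi_\sigma(0,0)\big]\,d\sigma,
\]
where $\varphi_\sigma(b,\zeta)=(1+b)L\big(s+(1+b)(\sigma-s),\,\xi(\sigma)+(\sigma-s)\zeta,\,\tfrac{\dot\xi(\sigma)+\zeta}{1+b}\big)$ is, for each $\sigma$, of class $C^2$ near the origin of $\R\times\R^n$ with $\|D^2\varphi_\sigma\|$ bounded uniformly in $\sigma$ by some $C_0=C_0(M,\kappa,\lambda)$ — here the $\partial_b$--derivatives generate $L_t$--terms, controlled on $\mathcal K$ by \mbox{\rm{(L3)}}. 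A second--order Taylor expansion of the symmetric difference bounds the integrand by $C_0(a^2+|w|^2)=C_0(h^2+|z|^2)/(t-s)^2$, and integration over $[s,t]$ gives (1); taking $h=0$ yields in particular that $A_{s,t}(x,\cdot)$ is semiconcave with constant $C_0/(t-s)$.

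\emph{Parts (3) and (2).} For (3), fix $t\in(s,s+t_0]$ and $z\in B(0,\lambda(t-s))$, let $\xi^\pm:[s,t]\to\R^n$ minimize $A_{s,t}(x,y\pm z)$, and use $\bar\xi=\tfrac12(\xi^++\xi^-)$, a curve from $x$ to $y$, as a competitor for $2A_{s,t}(x,y)$; writing $\xi^\pm=\bar\xi\pm\delta$ with $\delta=\tfrac12(\xi^+-\xi^-)$, a second--order Taylor expansion of $L$ together with $L_{vv}\ge\mu I$ and $\|D^2L\|\le M$ on $\mathcal K$ bounds the integrand $L(\tau,\xi^+,\dot\xi^+)+L(\tau,\xi^-,\dot\xi^-)-2L(\tau,\bar\xi,\dot{\bar\xi})$ below by $\tfrac{\mu}{2}|\dot\delta(\tau)|^2-C_M|\delta(\tau)|^2$, $C_M=C_M(M,\mu)$. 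Since $\delta(s)=0$ and $\delta(t)=z$, Cauchy--Schwarz gives $\int_s^t|\dot\delta|^2\ge|z|^2/(t-s)$ and $\int_s^t|\delta|^2\le(t-s)^2\int_s^t|\dot\delta|^2$, so
\[
A_{s,t}(x,y+z)+A_{s,t}(x,y-z)-2A_{s,t}(x,y)\ \ge\ \Big(\tfrac{\mu}{2}-C_M(t-s)^2\Big)\int_s^t|\dot\delta|^2\ \ge\ \tfrac{\mu}{4}\cdot\tfrac{|z|^2}{t-s}
\]
whenever $C_Mt_0^2\le\mu/4$; this proves (3) with $C_2=\mu/4$. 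Part (2) is proved the same way, except that the two minimizers now have \emph{different} time--spans: let $\xi^+$ minimize $A_{s,t+h}(x,y+z)$ and $\xi^-$ minimize $A_{s,t-h}(x,y-z)$, reparametrize each affinely onto $[s,t]$ fixing $s$ to get $\tilde\xi^\pm$, and use $\bar\xi=\tfrac12(\tilde\xi^++\tilde\xi^-)$ as competitor for $2A_{s,t}(x,y)$. As in (3) the leading term stays nonnegative for $t_0$ small; the remainder produced by the two reparametrization Jacobians, after a first--order expansion in $a$, is bounded at each time by $C(M)\big(|a|\,\|\tilde\xi^+-\tilde\xi^-\|_{C^1}+a^2\big)$, the $L_t$--part again controlled by \mbox{\rm{(L3)}}. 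Provided $\|\tilde\xi^+-\tilde\xi^-\|_{C^1([s,t])}\le C'(h+|z|)/(t-s)$ for $t-s\le t_0$, this remainder is $O\big((h^2+|z|^2)/(t-s)^2\big)$ pointwise, hence $O\big((h^2+|z|^2)/(t-s)\big)$ after integration, giving (2) with an \emph{absolute} lower bound $-C_1(T,R,\lambda)(h^2+|z|^2)/(t-s)$.

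\emph{The main obstacle} is precisely the last inequality: the Lipschitz dependence of an extremal on its terminal point and terminal time, with modulus $O(1/(t-s))$, valid for $t-s\le t_0$. This amounts to non--conjugacy of minimizers for small time--span together with quantitative invertibility of the endpoint map $p\mapsto\Xi_{s,t}(x,p)$ (the time--$t$ position of the Hamiltonian flow issued from $(s,x,p)$), with $(\partial_p\Xi_{s,t})^{-1}=O(1/(t-s))$; in the non--autonomous setting it has to be extracted from the Jacobi (variational) equation along the extremal furnished by Proposition \ref{pro:fundamental solution}, whose coefficients are controlled by the velocity bound and the $C^2$--bound $M$, with the $L_t$--terms kept in line by \mbox{\rm{(L3)}}. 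This is also exactly where the smallness of $t_0=t_0(s,x,T,R,\lambda)$ enters; the remaining steps are the bookkeeping above, following the autonomous template of \cite{Cannarsa_Cheng3}.
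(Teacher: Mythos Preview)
The paper does not prove this proposition; Appendix~\ref{Appendix A} simply states that ``the proofs of these regularity results are similar to those in \cite{Cannarsa_Cheng3} in autonomous case.'' Your proposal is precisely such an adaptation of the autonomous template, so it agrees with what the paper invokes. Parts~(1) and~(3) are correctly sketched.

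For part~(2), however, the ``main obstacle'' you isolate --- the $C^1$-Lipschitz dependence of minimizers on terminal data with modulus $O(1/(t-s))$, to be extracted from the Jacobi equation --- is an artifact of your decomposition into ``leading term'' plus ``reparametrization remainder'' with only a first-order expansion of the latter in $a$. If instead you treat the whole integrand as a symmetric second difference of the single $C^2$ function
\[
\Phi_\sigma(b,p,q):=(1+b)\,L\bigl(s+(1+b)(\sigma-s),\,p,\,q/(1+b)\bigr)
\]
at the base point $(0,\bar\xi(\sigma),\dot{\bar\xi}(\sigma))$ in the direction $(a,\delta(\sigma),\dot\delta(\sigma))$ and apply the second-order Taylor formula in all variables jointly --- exactly as you already did for~(1) --- then $\Phi_{qq}=L_{vv}/(1+b)\ge 2\mu/3$ supplies the good term, and the cross terms $\Phi_{bq}\,a\dot\delta$ and $\Phi_{pq}\,\delta\dot\delta$ are absorbed by Young's inequality, yielding pointwise
\[
\Phi_\sigma(a,\bar\xi+\delta,\dot{\bar\xi}+\dot\delta)+\Phi_\sigma(-a,\bar\xi-\delta,\dot{\bar\xi}-\dot\delta)-2\Phi_\sigma(0,\bar\xi,\dot{\bar\xi})\ \ge\ \tfrac{\mu}{3}\,|\dot\delta|^2-C\,(a^2+|\delta|^2).
\]
After integration over $[s,t]$ and the Poincar\'e estimate $\int_s^t|\delta|^2\le(t-s)^2\int_s^t|\dot\delta|^2$ (valid since $\delta(s)=0$), the coefficient of $\int|\dot\delta|^2$ stays nonnegative for $t-s\le t_0$ small, and one obtains directly
\[
A_{s,t+h}(x,y+z)+A_{s,t-h}(x,y-z)-2A_{s,t}(x,y)\ \ge\ -C\,\frac{h^2}{t-s}\ \ge\ -C\,\frac{h^2+|z|^2}{t-s},
\]
with no need to compare $\tilde\xi^+$ and $\tilde\xi^-$ in $C^1$. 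Your route through non-conjugacy and the variational equation is correct, but it is considerably more work than the problem requires; the joint expansion keeps part~(2) at the same level of difficulty as part~(3).
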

\begin{Rem}
	In this paper, for any fixed $T>0$, we choose $\lambda:=\lambda_2(T)$ and $R:=\lambda_2(T)T$, where $\lambda_2(T)$ is defined in Lemma \ref{lem:estimation2}. Assume that $x\in S_\lambda(0,0,s)$, then by Lemma \ref{lem:estimation2} and the definition of $S_\lambda(s,x,\tau)$, we can let $t_0(s,x,T,R)=T-s$ and $C_i(T,R,\lambda)$ only depends on $T$ for $i=1,2,3$.
	
	Moreover, if we consider the domain $[0,T]\times \bar{B}(x_0,R)$ for any $x_0\in\R^n$, then  $C_i(T,R,\lambda)$ only depends on initial point $x_0$ and $T$ for $i=1,2,3$.
\end{Rem}


\begin{Pro}\label{A.5}
	Suppose L satisfies condition \mbox{\rm(L1)-(L3)}. Then for any fixed $T>0$, $R>0$, $\lambda>0$ and $(s,x)\in[0,T]\times \bar{B}(0,R)$, the function $(t,y)\mapsto A_{s,t}(x,y)$ is of class $C^{1,1}_{loc}$ on the cone $S_{\lambda}(s,x,t_0(T,R,\lambda))$, where $t_0(T,R,\lambda)$ is that in Proposition \ref{A.2}. Moreover, we have
	\begin{align*}
		& D_y A_{s,t}(x,y)=L_v(t,\xi(t),\dot \xi(t) ),\\
		& D_x A_{s,t}(x,y)=-L_v(s,\xi(s),\dot \xi(s) ),\\
		& D_t A_{s,t}(x,y)=-E_{s,t,x,y},
	\end{align*}  
	where $\xi\in \Gamma_{x,y}^{s,t} $ is the unique minimizer for $A_{s,t}(x,y)$ and 
    \begin{align*}
    	E_{s,t,x,y}:=H(t,\xi(t),p(t))
    \end{align*}
	is the energy of the Hamilton trajectory $(\xi,p)$ with 
	$$
	p(\tau):=L_v(\tau,\xi(\tau),\dot\xi(\tau)),\quad \tau\in[s,t].
	$$ 
\end{Pro}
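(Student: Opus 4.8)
The plan is to obtain the $C^{1,1}_{loc}$-regularity directly from the two-sided second-difference bounds of Proposition \ref{A.2}, and then to identify the three partial derivatives by a first-variation (envelope) computation along the minimizing extremal. First I would observe that, by parts (1) and (2) of Proposition \ref{A.2}, the function $(t,y)\mapsto A_{s,t}(x,y)$ is simultaneously semiconcave and semiconvex with linear modulus near each point of the open cone $S_\lambda(s,x,t_0(T,R,\lambda))$, the modulus constants $C_0(T,R,\lambda)/(t-s)$ and $C_1(T,R,\lambda)/(t-s)$ being locally bounded away from the vertex $\{t=s\}$. Since a function that is at once semiconcave and semiconvex with (locally) linear modulus on a convex open set is of class $C^{1,1}_{loc}$ there (see, e.g., \cite{Cannarsa_Sinestrari_book}), this yields the regularity statement and, in particular, differentiability of $A_{s,t}(x,y)$ in $(t,y)$ at every point of the cone. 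Applying the same estimates to the initial datum --- equivalently, to the time-reversed Lagrangian $\widetilde L(\tau,\cdot,\cdot)=L(s+t-\tau,\cdot,-\cdot)$, whose minimizers are those of $L$ traversed backwards --- gives differentiability in $x$ as well.

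For the derivative formulas, I would fix a minimizer $\xi\in\Gamma^{s,t}_{x,y}$, which is $C^2$ by Proposition \ref{pro:fundamental solution}, and put $p(\tau)=L_v(\tau,\xi(\tau),\dot\xi(\tau))$. Given $w\in\R^n$ and $|\epsilon|$ small, the curve $\zeta_\epsilon(\tau)=\xi(\tau)+\tfrac{\tau-s}{t-s}\epsilon w$ still lies in the admissible region and joins $x$ to $y+\epsilon w$, so $A_{s,t}(x,y+\epsilon w)\le\int_s^t L(\tau,\zeta_\epsilon,\dot\zeta_\epsilon)\,d\tau$; differentiating the right-hand side at $\epsilon=0$, integrating by parts and using the Euler--Lagrange equation from Proposition \ref{pro:fundamental solution}, the interior term drops and one is left with $A_{s,t}(x,y+\epsilon w)\le A_{s,t}(x,y)+\epsilon\langle p(t),w\rangle+o(\epsilon)$ for either sign of $\epsilon$. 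Because $A_{s,t}(x,\cdot)$ is differentiable at $y$, this forces $D_yA_{s,t}(x,y)=p(t)=L_v(t,\xi(t),\dot\xi(t))$; since the right-hand side does not depend on the chosen minimizer and, by condition (L1), strict convexity of $L$ in $v$ together with the Euler--Lagrange flow recovers $\xi$ from the terminal pair $(\xi(t),\dot\xi(t))$, the minimizer is moreover unique. Symmetrically, the competitor $\xi(\tau)+\tfrac{t-\tau}{t-s}\epsilon w$, which joins $x+\epsilon w$ to $y$, yields $D_xA_{s,t}(x,y)=-L_v(s,\xi(s),\dot\xi(s))$. Finally, for the time variable one extends the Hamiltonian trajectory $(\xi,p)$ a little past $t$ and uses the competitor $\zeta_h(\tau)=\xi(\tau)+\tfrac{\tau-s}{t+h-s}\bigl(y-\xi(t+h)\bigr)$ on $[s,t+h]$; since $y-\xi(t)=0$, its first variation at $h=0$ equals $L(t,\xi(t),\dot\xi(t))-\langle p(t),\dot\xi(t)\rangle=-H(t,\xi(t),p(t))$, whence $A_{s,t+h}(x,y)\le A_{s,t}(x,y)-h\,H(t,\xi(t),p(t))+o(h)$ for both signs of $h$ and therefore $D_tA_{s,t}(x,y)=-H(t,\xi(t),p(t))=-E_{s,t,x,y}$. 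Equivalently, now that $A$ is $C^1$ on the cone one may simply note that $(t,y)\mapsto A_{s,t}(x,y)$ solves the Hamilton--Jacobi equation $D_tA+H(t,y,D_yA)=0$ classically there and substitute $D_yA_{s,t}(x,y)=p(t)$ and $y=\xi(t)$.

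I expect the substantive content to lie entirely in Proposition \ref{A.2}: once the two-sided bounds on the cone are available, the present statement is essentially a matter of packaging. The points that still require care are that the perturbed curves $\zeta_\epsilon$, its $x$-analogue and $\zeta_h$ must remain inside $S_\lambda(s,x,t_0(T,R,\lambda))$, where differentiability of $A$ is guaranteed --- this is where the strict inclusion $|y-x|<\lambda(t-s)$ and the smallness of $|\epsilon|,|h|$ are used --- and that differentiability in the initial variable $x$, not literally covered by the $(t,y)$-version of Proposition \ref{A.2}, has to be recovered via the time-reversal symmetry noted above (or through a parallel family of second-difference estimates in $x$).
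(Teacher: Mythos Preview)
Your proposal is correct. Note, however, that the paper does not actually supply a proof of Proposition \ref{A.5}: Appendix \ref{Appendix A} merely states it and remarks that the proofs of the collected regularity results ``are similar to those in \cite{Cannarsa_Cheng3} in autonomous case.'' Your argument --- deduce $C^{1,1}_{loc}$ from the simultaneous semiconcavity and semiconvexity bounds of Proposition \ref{A.2} via the standard criterion (a function that is both semiconcave and semiconvex with linear modulus is $C^{1,1}$; see \cite{Cannarsa_Sinestrari_book}), and then identify the partial derivatives by a first-variation computation along the unique minimizing extremal --- is precisely the expected adaptation of that reference to the time-dependent setting, so there is nothing substantive to compare.
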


\section{Proof of Lemma \ref{T - T + est1} and Lemma \ref{lem:estimation2}}\label{Appendix B}

The convex conjugate of a superlinear function $\theta_T$ is defined as
$$
\theta_T^*(s)=\sup_{r\geqslant 0 } \{rs - \theta_T(r)  \}, \quad   s \geqslant 0 .
$$ 
In view of the superlinear growth of $\theta_T$, it is clear that $\theta_T^* $ is well defined and satisfies 
$$
\theta_T(r)+\theta_T^*(s) \geqslant rs, \quad r,s \geqslant 0,
$$  
which in turn can be used to show that $\theta_T^*(s)/s \rightarrow +\infty $ as $s\rightarrow +\infty $.

\medskip
\begin{proof}[Proof of Lemma \ref{T - T + est1}]
	For item (1), let $k= \mbox{\rm{Lip}}[f]+1 $. Then for any $x\in \R^n $, $0\leqslant t_1<t_2\leqslant T$ and $z \in \R^n $, we have
	\begin{align*}
		A_{t_1,t_2}(z, x)=&\, \inf_{\xi \in \Gamma_{z, x}^{t_1,t_2} } \int_{t_1}^{t_2} L(s,\xi,\dot \xi )\ ds\\
		\geqslant &\, \inf_{\xi \in \Gamma_{z, x}^{t_1,t_2} } \int_{t_1}^{t_2} \theta_T(|\dot \xi|) \ ds - c_T (t_2-t_1)  \\
		\geqslant &\, \inf_{\xi \in \Gamma_{z, x}^{t_1,t_2} } k \int_{t_1}^{t_2} |\dot \xi |  \ ds-(\theta_T^*(k)+c_T )(t_2-t_1) \\
		\geqslant &\, k|z-x|-(\theta_T^*(k)+c_T )(t_2-t_1).
	\end{align*}
	Therefore,
	\begin{align*}
		&f(x)+A_{t_1,t_2}(x,x)-f(z)-A_{t_1,t_2}(z,x) \\
		\leqslant &\, \mbox{\rm Lip}[f] \cdot |z-x|-k|z-x|+( \theta_{T}^*(k)+  c_{T} )\ (t_2-t_1) +  \int_{t_1}^{t_2} L(s,x,0) \ ds \\
		\leqslant &\, -|z-x|+ [ \theta_{T}^*(k)+ c_{T} + \overline \theta_{T} (0 )  ] \ (t_2-t_1) \ .
	\end{align*}	 
	Now, taking $\lambda_1=\theta_{T}^*(k)+ c_{T} + \overline \theta_{T} (0 )$, it follows that
	\begin{equation}\label{Lambda 1}
		\Lambda_{t_1,t_2}^x:=\{z: f(z)+A_{t_1,t_2}(z, x) \leqslant  f(x)+A_{t_1,t_2}(x,x)   \}\subset \overline B\big(x, \lambda_1(t_2-t_1) \big).
	\end{equation}
	Therefore $\Lambda_{t_1,t_2}^x$ is compact and the infimum in \eqref{T -} is attained, i.e., $Z(f,t_1,t_2,x) \neq \emptyset$. Moreover, due to \eqref{Lambda 1}, for any $z_{t_1,t_2,x}\in Z(f,t_1,t_2,x) $, we have
	$$ 
    |z_{t_1,t_2,x}-x|\leqslant\lambda_1(t_2-t_1).
	$$  	
	For item (2), A similar result holds for the sup-convolution defined in \eqref{T +}.
\end{proof}

\medskip
\begin{proof}[Proof of Lemma \ref{lem:estimation2}]
    Set $(t,x)\in(0,T]\times\R^n$ is a differentiable point of $u$. Due to Proposition \ref{pro:fundamental solution} and Proposition \ref{prop:D^*}, the solution of \eqref{eq:H} with terminal condition 
    \begin{equation*}
    	\begin{cases}
    		\xi(t)=x\\
    		p(t)=\nabla u(t,x)
    	\end{cases}
    \end{equation*}
    is the unique minimizer for $u(t,x)$. Lemma \ref{T - T + est1} (1) implies $|\xi(0)-x|\leqslant \lambda_1(T,\mbox{\rm Lip}[u_0])t$. Now, denote that
    \begin{align*}
    	E(s):=H(s,\xi(s),p(s)),\qquad s\in[0,t].
    \end{align*}
    By Lemma \ref{lem:p endpoint}, we know that $p(0)\in D^{-}u_0(\xi(0))$. This implies $|p(0)|\leqslant \mbox{\rm Lip}\,[u_0]$ and
    \begin{align*}
        E(0)=H(0,\xi(0),p(0))\leqslant \theta^*_T(|p(0)|)+c_T \leqslant \theta^*_T(\mbox{\rm Lip}\,(u_0))+c_T.
    \end{align*}
    Notice that
    \begin{align*}
    	&\frac{d}{ds}E(s)=\frac{d}{ds}H(s,\xi(s),p(s))=H_t+H_x\cdot\dot{\xi}(s)+H_p\cdot\dot{p}(s)\\
       =&H_t+H_x\cdot H_p+H_p\cdot(-H_x)=H_t(s,\xi(s),p(s))=-L_t(s,\xi(s),\dot{\xi}(s)).
    \end{align*}
    Thus, we have
    \begin{align*}
    	 E(t)&=E(0)+\int_{0}^{t}\frac{d}{ds}E(s)\ ds\\
             &=E(0)+\int_{0}^{t}L_t(s,\xi(s),\dot\xi(s))\ ds\\
    	&\leqslant E(0)+\int_{0}^{t}\Big(\widetilde C_1(T)+\widetilde C_2(T)L(s,\xi(s),\dot\xi(s))\Big)\ ds\\
    	&\leqslant E(0)+t\ \widetilde C_1(T)+\widetilde C_2(T)\int_{0}^{t} L(s,\xi(0)+s(x-\xi(0)),\frac{x-\xi(0)}{t})\ ds\\
    	&\leqslant E(0)+\widetilde C_1(T)T+\widetilde C_2(T)\int_{0}^{t}\overline\theta_T(|\frac{x-\xi(0)}{t}|)\ ds\\
    	&\leqslant \theta^*_T(\mbox{\rm Lip}\,(u_0))+c_T+\widetilde C_1(T)T+\widetilde C_2(T)T\cdot\overline\theta_T( \lambda_1(T,\mbox{\rm Lip}[u_0])t).
    \end{align*}
    Since 
    \begin{align*}
        |\nabla u(t,x)|\leqslant \overline{\theta}_{T}^{*}(|\nabla u(t,x)|)+\overline{\theta}_{T}(1)\leqslant H(t,x,\nabla u(t,x))+\overline{\theta}_{T}(1)=E(t)+\overline{\theta}_{T}(1),
    \end{align*}
    it follows that
    \begin{align*}
    	|\nabla u(t,x)|\leqslant& \theta^*_T(\mbox{\rm Lip}\,(u_0))+c_T+\widetilde C_1(T)T+\widetilde C_2(T)T\cdot\overline\theta_T( \lambda_1(T,\mbox{\rm Lip}[u_0])t)+\overline{\theta}_{T}(1)\\
    	:=&F_1(T).
    \end{align*}
    By proposition \ref{pro:properties of u} (1), we obtain
    \begin{align*}
    	|u_t(t,x)|&=|-H(t,x,\nabla u(t,x))|\leqslant \theta_{T}^{*}(|\nabla u(t,x)|)+c_0+|\bar{\theta}_{T}^{*}(|\nabla u(t,x)|)|\\
    	&\leqslant \theta_{T}^{*}(F_1(T))+c_0+|\bar{\theta}_{T}^{*}(F_1(T))|:=F_2(T).
    \end{align*}
    Therefore,
    \begin{align*}
    	|Du(t,x)|=(|\nabla u(t,x)|^2+|u_t(t,x)|^2)^{\frac{1}{2}}\leqslant \big(F_1^2(T)+F_2^2(T)\big)^{\frac{1}{2}}:=F_0(T).
    \end{align*}
    Combing this with Proposition \ref{pro:properties of u} (2), we conclude that $u$ is a Lipschitz function on $(0,T]\times\R^n$ and $\mbox{\rm Lip}\,[u]\leqslant F_0(T)$.
\end{proof}

\section{Proof of Lemma \ref{lem:homotopy on [0,t]}}\label{Appendix C}
We define $F:\R^n \times [0,t]\to \R^n$ as:
$$
F(x,s)= \mathbf{x}_x(s), \quad  s \in [0,+\infty),
$$ 
where $\mathbf{x}_x:[0,+\infty)\to \R^n$ is that in Theorem \ref{propagation of singularitie global}.

Similar to \cite[Lemma 2.1]{Cannarsa_Cheng_Fathi2017}, $F(x,s)$ has the properties (a), (b) and (c) stated in Lemma \ref{lem:homotopy on [0,t]}. Due to Theorem \ref{propagation of singularitie global1},  $\mathbf{x}_x(s)$ is a locally Lipschitz curve. Thus, to prove that $F(x,s)=\mathbf{x}_x(s)$ is continuous, it remains to show $\mathbf{x}_x(s)$ is continuous with respect to $x$. We prove it in Lemma \ref{cor continuous}. 

	\begin{Lem}\label{lem:C.1}
		For any fixed $x\in \R^n $ and $t>0$, let $t_{x,T}>0$ be defined in Lemma \ref{propagation of singularitie}. Then for any $0<s<t<T$ with $t-s\leqslant t_{x,T} $, the map $z\mapsto y_{s,t,z}$ is Lipschitz on $B(x,\lambda_2(T)t_{x,T})$ with Lipschitz constant $K_1(x,T)$ which only depends on $x,T$. 
	\end{Lem}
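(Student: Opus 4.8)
The plan is to fix $x\in\R^n$ and $t>s>0$ with $t-s\le t_{x,T}$, and to exploit the fact—established in the proof of Proposition \ref{propagation of singularitie}—that on the cone region relevant here the function $z'\mapsto u(t,z')-A_{s,t}(z,z')$ is \emph{uniformly} strictly concave, with a concavity constant bounded below by $C_2(x,T)/(t-s)$ (from Proposition \ref{A.2}(3)) minus the semiconcavity constant $C(x,T)$ of $u$; since $t-s\le t_{x,T}=C_2(x,T)/(2C(x,T))$, this lower bound is at least $C_2(x,T)/\bigl(2(t-s)\bigr)$. So the maximizer $y_{s,t,z}$ is the unique critical point of a function that is $\frac{C_2(x,T)}{2(t-s)}$-strongly concave, and the whole argument is a quantitative stability estimate for maximizers of strongly concave functions under perturbation of the parameter $z$.

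\textbf{Main steps.} First I would write down the first-order optimality condition at the maximizer: $D_y A_{s,t}(z,y_{s,t,z})\in D^+u(t,y_{s,t,z})$, and more usefully, for two base points $z_1,z_2\in B(x,\lambda_2(T)t_{x,T})$ compare the two optimality conditions. Using strong concavity of $u(t,\cdot)-A_{s,t}(z_1,\cdot)$ tested against the competitor $y_{s,t,z_2}$, and symmetrically with $z_1\leftrightarrow z_2$, and adding, I get
\begin{align*}
	\frac{C_2(x,T)}{t-s}\,|y_{s,t,z_1}-y_{s,t,z_2}|^2
	\le \bigl\langle D_yA_{s,t}(z_1,y_{s,t,z_1})-D_yA_{s,t}(z_2,y_{s,t,z_2}),\,y_{s,t,z_1}-y_{s,t,z_2}\bigr\rangle,
\end{align*}
where the cross terms coming from $D^+u$ are controlled by semiconcavity of $u(t,\cdot)$ (they have the right sign up to the constant already absorbed). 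Next I split the right-hand side by adding and subtracting $D_yA_{s,t}(z_2,y_{s,t,z_1})$: the term $D_yA_{s,t}(z_2,y_{s,t,z_1})-D_yA_{s,t}(z_2,y_{s,t,z_2})$ is again controlled by the semiconvexity bound $C_1(x,T)/(t-s)$ on $A_{s,t}(z_2,\cdot)$ (Proposition \ref{A.2}(2)), and absorbs into the left side once $t-s$ is small; the remaining term $D_yA_{s,t}(z_1,y_{s,t,z_1})-D_yA_{s,t}(z_2,y_{s,t,z_1})$ is the genuine perturbation, which I bound by the Lipschitz dependence of $D_yA_{s,t}(\cdot,y)$ on its first argument. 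That last bound comes from the $C^{1,1}_{loc}$-regularity of $A_{s,t}$ on the cone (Proposition \ref{A.5}) together with the mixed-derivative estimate; concretely one expects $|D_yA_{s,t}(z_1,y)-D_yA_{s,t}(z_2,y)|\le \frac{C(x,T)}{t-s}|z_1-z_2|$ with the same scale $1/(t-s)$, so that dividing through by $C_2(x,T)/(t-s)$ produces a clean estimate $|y_{s,t,z_1}-y_{s,t,z_2}|\le K_1(x,T)|z_1-z_2|$ with $K_1$ independent of $s,t$.

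\textbf{The main obstacle} is precisely the quantitative Lipschitz-in-$z$ estimate for $D_yA_{s,t}(z,\cdot)$, uniformly for $t-s$ small, with the correct $1/(t-s)$ scaling. Propositions \ref{A.2} and \ref{A.5} give regularity of $A_{s,t}(x,\cdot)$ in the $(t,y)$ variables for a \emph{fixed} first endpoint $x$, but here the first endpoint $z$ is varying, so one needs a joint $C^{1,1}_{loc}$ bound on $(z,y)\mapsto A_{s,t}(z,y)$ on the appropriate product cone; this follows from the same Tonelli/semiconcavity machinery (differentiating the minimizer's dual arc in the initial condition, as in \cite{Cannarsa_Cheng3}), but it has to be stated in a form symmetric in both endpoints. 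Once that joint regularity is in hand, the rest is the routine strong-concavity stability computation sketched above, and collecting constants gives the asserted $K_1(x,T)$ depending only on $x$ and $T$ (through $C_1(x,T),C_2(x,T),C(x,T)$), not on the particular $s,t$.
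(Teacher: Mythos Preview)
Your strategy is the same as the paper's: combine the uniform convexity of $A_{s,t}(z,\cdot)$ (Proposition~\ref{A.2}(3)), the semiconcavity of $u(t,\cdot)$, and a Lipschitz-in-$z$ estimate for $D_yA_{s,t}(z,y)$ with scale $1/(t-s)$, and you correctly identify that last mixed estimate as the only nontrivial ingredient. The paper's execution is slightly leaner than yours: rather than symmetrize in $z_1,z_2$ and then split $D_yA_{s,t}(z_1,y_{s,t,z_1})-D_yA_{s,t}(z_2,y_{s,t,z_2})$, it works one-sidedly---uniform convexity of $A_{s,t}(x_1,\cdot)$ at the point $y_{s,t,x_2}$, together with the semiconcavity inequality for $u$ at $y_{s,t,x_2}$ using $D_yA_{s,t}(x_2,y_{s,t,x_2})\in D^+u(t,y_{s,t,x_2})$---and lands directly on the difference $D_yA_{s,t}(x_2,y_{s,t,x_2})-D_yA_{s,t}(x_1,y_{s,t,x_2})$, both arguments evaluated at the same $y$; this bypasses your splitting step entirely. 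In your version, note that the piece $\langle D_yA_{s,t}(z_2,y_{s,t,z_1})-D_yA_{s,t}(z_2,y_{s,t,z_2}),\,y_{s,t,z_1}-y_{s,t,z_2}\rangle$ needs an \emph{upper} bound, hence the semiconcavity constant $C_0/(t-s)$ from Proposition~\ref{A.2}(1) rather than the semiconvexity constant $C_1$ from (2); this can still be made to absorb, but the paper's one-sided route avoids having to track that extra constant.
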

	\begin{proof}
		For any $x_1,x_2$ with $|x_1-x_2|<\lambda_2(T)t_{x,T}$ and $|x_1-x_2|<r$, we denote by $ y_{s,t,x_1}$ and $y_{s,t,x_2}$ the unique maximizers of $T^+_{s,t} u(t,x_1)$ and $T^+_{s,t} u(t,x_2)$ respectively. Notice that  $A_{s,t}(x_1,\cdot )$ is uniformly convex in the ball $B(x_1, 2 \lambda_2(T) t_{x,T} )$ with convexity constant $C_2(x,T)/t$ for $t\in (0,t_T)$.Then we have
		$$
		\frac{C_2(x,T)}{t}|y_{s,t,x_1}-y_{s,t,x_2}|^2 \leqslant A_{s,t}(x_1,y_{s,t,x_1} )-A_{s,t}(x_1,y_{s,t,x_2} ) - D_yA_{s,t}(x_1,y_{s,t,x_2})(y_{s,t,x_1}-y_{s,t,x_2}).
		$$
		On the other hand, since $C(T)$ is the semiconcave constant of $u(t,\cdot )$ for $t\in [0,T]$, we have
		\begin{align*}
			&\,A_{s,t}(x_1,y_{s,t,x_1} )-A_{s,t}(x_1,y_{s,t,x_2} ) \leqslant u(t,y_{s,t,x_1} )- u(t,y_{s,t,x_2} )
			\\
			\leqslant &\, D_yA_{s,t}(x_2,y_{s,t,x_2})(y_{s,t,x_1}-y_{s,t,x_2}) +C(x,T)|y_{s,t,x_1}-y_{s,t,x_2}|^2.
		\end{align*}
		Notice that $D_y A_t(x_2,y_{s,t,x_2})\in\nabla^+ u(t,y_{s,t,x_2})$. Thus,
		\begin{align*}
			&\,\frac{C_2(x,T)}{t-s}|y_{s,t,x_1}-y_{s,t,x_2}|^2 \\
			\leqslant &\, \Big( D_yA_{s,t}(x_2,y_{s,t,x_2})- D_yA_{s,t}(x_1,y_{s,t,x_2}) \Big) (y_{s,t,x_1}-y_{s,t,x_2})  + C(x,T)|y_{s,t,x_1}-y_{s,t,x_2}|^2 \\
			\leqslant &\,  \frac{C_0(x,T)}{t-s} |x_1-x_2|\cdot |y_{s,t,x_1}-y_{s,t,x_2} |+C(x,T)|y_{s,t,x_1}-y_{s,t,x_2}|^2.
		\end{align*}
		Therefore, due to $t_{x,T} := \frac{C_2(x,T)}{2 C(x,T)} $ , for any $0<t-s\leqslant t_{x,T}$, we have
		$$
		|y_{s,t,x_1}-y_{s,t,x_2}|\leqslant \frac{C_0(x,T)}{C_2(x,T)-C(x,T)\cdot (t-s)} |x_1-x_2 |\leqslant \frac{2C_0(x,T)}{C_2(x,T)} |x_1-x_2|:=K_1(x,T) |x_1-x_2|.
		$$
	\end{proof}
	\begin{Lem}\label{cor continuous}
		For any $t>0$, $\mathbf{x}_x(t)$ is continuous with respect to $x$. 
	\end{Lem}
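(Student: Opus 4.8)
The plan is to prove the stronger statement that, near any fixed $x_0\in\R^n$, the map $x\mapsto\mathbf{x}_x(t)$ is Lipschitz, by realizing $\mathbf{x}_x(t)$ as a finite composition of the one-step maps $z\mapsto y_{s,\sigma,z}$ from Lemma \ref{lem:C.1} and checking that their Lipschitz constant can be chosen uniformly on a neighborhood of $x_0$.

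First I would fix $x_0\in\R^n$, $t>0$ and $T>t$, and localize. By the uniform estimate \eqref{Phi Lip} together with Step I in the proof of Theorem \ref{propagation of singularitie global}, there are $R=R(x_0,T)>0$ and $r_0>0$ such that $\mathbf{x}_x(s)\in\bar{B}(x_0,R)$ for all $s\in[0,t]$ and all $x\in B(x_0,r_0)$. On the compact set $[0,T]\times\bar{B}(x_0,R)$ the semiconcavity constant of $u$ (Proposition \ref{pro:properties of u}\,(2)) and the constants $C_0,C_1,C_2$ attached to $A_{s,\sigma}(z,\cdot)$ (Proposition \ref{A.2} and the Remark after it) depend only on $x_0$ and $T$; hence the threshold time $\delta:=t_{x_0,T}$ of Proposition \ref{propagation of singularitie} and the Lipschitz constant $K:=K_1(x_0,T)$ of Lemma \ref{lem:C.1} can be frozen, so that for every $[s,\sigma]\subseteq[0,t]$ with $\sigma-s\leqslant\delta$ the map $z\mapsto y_{s,\sigma,z}$ is well-defined, single-valued and $K$-Lipschitz on $\bar{B}(x_0,R)$.

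Then I would pick a partition $0=\sigma_0<\sigma_1<\cdots<\sigma_N=t$ with $\sigma_j-\sigma_{j-1}\leqslant\delta$, so $N=\lceil t/\delta\rceil$. Using the concatenation/semigroup identity $Y(a,c,\cdot)=Y\big(b,c,Y(a,b,\cdot)\big)$ that underlies the construction of $\mathbf{x}_x$ in Theorem \ref{propagation of singularitie global} — which also shows that this partition, rather than the $x$-dependent one $\{t_0+\sum k_jt_j\}$, may be used — one obtains
\begin{align*}
\mathbf{x}_x(t)=\big(y_{\sigma_{N-1},\sigma_N,\cdot}\circ y_{\sigma_{N-2},\sigma_{N-1},\cdot}\circ\cdots\circ y_{\sigma_0,\sigma_1,\cdot}\big)(x),\qquad x\in B(x_0,r_0),
\end{align*}
a composition of $N$ maps that are $K$-Lipschitz on $\bar{B}(x_0,R)$, with all intermediate images remaining in $\bar{B}(x_0,R)$ by the choice of $R$. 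Hence $|\mathbf{x}_x(t)-\mathbf{x}_{x_0}(t)|\leqslant K^N|x-x_0|$ for $x\in B(x_0,r_0)$, which gives (local Lipschitz, in particular) continuity of $x\mapsto\mathbf{x}_x(t)$ at $x_0$; since $x_0$ and $t$ are arbitrary, the lemma follows.

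The hard part will be the localization step: one must verify that the threshold $\delta$ and the Lipschitz constant $K$ of Lemma \ref{lem:C.1} really are uniform over a full neighborhood of $x_0$ — this is exactly where the Remark after Proposition \ref{A.2}, stating that the constants depend only on $x_0$ and $T$ once a ball is fixed, is invoked — and that every intermediate point $y_{\sigma_{j-1},\sigma_j,\cdot}$ of the perturbed curve stays inside $\bar{B}(x_0,R)$ so that each application of Lemma \ref{lem:C.1} is legitimate. Once this is in place, the remainder is just composition of finitely many Lipschitz maps.
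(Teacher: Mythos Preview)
Your plan hinges on the identity $Y(a,c,\cdot)=Y\big(b,c,Y(a,b,\cdot)\big)$ for \emph{arbitrary} $a<b<c$, which you call the ``concatenation/semigroup identity that underlies the construction of $\mathbf{x}_x$''. In Theorem~\ref{propagation of singularitie global} this relation appears only as the \emph{definition} of $\mathbf{x}_x$ at the specific $x$-dependent partition points $t_0+\sum_j k_jt_j$; it is not established (and is in general false) for other choices of $a,b,c$. The maximizer $y_{t_0,s,x}$ is the endpoint of an action minimizer from $(t_0,x)$ to time $s$, and that minimizer has no reason to pass through $z=y_{t_0,s_1,x}$ at an intermediate time $s_1$; hence $y_{t_0,s,x}\neq y_{s_1,s,z}$ in general. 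Consequently, the curve $\mathbf{x}_x$ \emph{does} depend on the partition used in its construction, and you cannot replace the $x$-dependent partition $\{t_0+\sum k_jt_j\}$ by your fixed one $\{\sigma_j\}$ without changing the curve. Your stronger local Lipschitz conclusion is therefore unproven.

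The paper's proof acknowledges this partition dependence explicitly and works around it: given $x_1$ near $x_2$, it builds an auxiliary curve $\widetilde{\mathbf{x}}_{x_1}$ from $x_1$ using the \emph{same} step size $t_{x_2,T}$ as $\mathbf{x}_{x_2}$. Then $|\widetilde{\mathbf{x}}_{x_1}(t)-\mathbf{x}_{x_2}(t)|$ is controlled by iterating Lemma~\ref{lem:C.1} exactly as you propose, while the remaining term $|\widetilde{\mathbf{x}}_{x_1}(t)-\mathbf{x}_{x_1}(t)|$ (same initial point, different partitions) is bounded by a constant times $|t_{x_1,T}-t_{x_2,T}|$, which tends to $0$ because $t_{x,T}=C_2(x,T)/(2C(x,T))$ is continuous in $x$. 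This yields continuity, not local Lipschitz continuity. Your localization of $\delta$ and $K$ is fine; the missing ingredient is an argument comparing curves built with different partitions from the same point, which you have sidestepped by an unjustified semigroup claim.
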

	\begin{proof}
		Let $\Omega_n(x):=\{B(x,\lambda_2(nT)nT) \}$ and assume $t_{x_1,T}\geqslant t_{x_2,T}$. By Theorem \ref{propagation of singularitie global}, we construct an new curve $\widetilde \mathbf{x}_{x_1}(t)$ defined by $\Omega_n(x_1)$ and $\widetilde t_{x_1,T}:=t_{x_2,T}$.   
		Then, for $(n-1)T \leqslant t <nT$ with $n \in\N$, 
		 by Lemma \ref{lem:C.1}, we have
		\begin{equation}\label{eq:C.2 1}
		|\widetilde \mathbf{x}_{x_1}(t)- \mathbf{x}_{x_2}(t)|
		\leqslant K_0(nT) \cdot |x_1-x_2|, \quad \forall |x_1-x_2|<K_1(T)t_{x_2,T}  .
		\end{equation}
		where $K_0(T)= (K_1(T))^{k_1}$ and $k_1$ depends on $\widetilde t_{x_1,T}=t_{x_2,T}$.

		On the other hand, due to Theorem \ref{propagation of singularitie}, Proposition \ref{A.2} and Proposition \ref{pro:properties of u}(2),
		$$
		t_{x,T}= \frac{C_2(x,T)}{2C(x,T)}
		$$
		is continuous with respect to $x$. Therefore, for $0 \leqslant t <T$, we have
		$$
		|\widetilde \mathbf{x}_{x_1}(t)- \mathbf{x}_{x_1}(t)| \leqslant \widetilde K_0(T)\cdot |t_{x_1,T}-t_{x_2,T}|,
		$$
		where $\widetilde K_0(T):=(K_1(T))^{ k_1} $ and $k_1$ depends on $t_{x_1,T}$. Combining this with \eqref{eq:C.2 1}, one obtain that for any $t\in [0,T)$ ,
		\begin{align*}
			|\mathbf{x}_{x_2}(t) -\mathbf{x}_{x_1}(t)|\leqslant &\, | \widetilde \mathbf{x}_{x_1}(t)- \mathbf{x}_{x_2}(t)|+ |\widetilde \mathbf{x}_{x_1}(t)- \mathbf{x}_{x_1}(t) |\\
			\leqslant &\, K_0(T) \cdot |x_1-x_2|+ \widetilde K_0(T)\cdot |t_{x_1,T}-t_{x_2,T}|.
		\end{align*}
		Similarly, for $t\in [(n-1)T,nT)$, there exists constant $K_0(n,T)$ and $\widetilde K_0(n,T)$ such that 
		\begin{align*}
			|\mathbf{x}_{x_2}(t) -\mathbf{x}_{x_1}(t)|\leqslant K_0(n,T) \cdot |x_1-x_2|+ \widetilde K_0(n,T)\cdot |t_{x_1,nT}-t_{x_2,nT}|.
		\end{align*}
	     Since $t_{x,nT}$ is continuous with respect to $x$ for each $n\in \N$, we conclude that for any $t>0$, $\mathbf{x}_x(t)$ is continuous with respect to $x$.
	\end{proof}

\bibliographystyle{abbrv}

\bibliography{sing}

\end{document}